\newcommand{\eps}{\varepsilon}
\begin{document}

\newcommand{\C}{\mathbb C}
\newcommand{\N}{\mathbb N}
\newcommand{\R}{\mathbb R}

\renewcommand{\Im}{{\mathrm{Im}}}
\renewcommand{\Re}{{\mathrm{Re}}}

\def\qqs{\ \forall \ }
\def \sprd#1,#2,#3{{\left( #1,#2\right)}_{#3}}
\newcommand{\lmd}{{\mathcal L}}

\newtheorem{theo}{Theorem}[section]
\newtheorem{prop}[theo]{Proposition}
\newtheorem{lem}[theo]{Lemma}
\newtheorem{cor}[theo]{Corollary}
\theoremstyle{definition}
\newtheorem{defi}[theo]{Definition}
\theoremstyle{remark}
\newtheorem{rem}[theo]{Remark}

\renewcommand{\cdots}{\dots}

\renewcommand{\theenumi}{\roman{enumi}}
\renewcommand{\labelenumi}{\theenumi)}

\title[The multiple tunnel effect on a
star-shaped network]{ Multiple tunnel effect for dispersive waves on
a star-shaped network: an explicit formula for the spectral
representation }

\author{F. Ali Mehmeti}

\address{%
Univ Lille Nord de France, F-59000 Lille, France \newline
\indent
 UVHC, LAMAV, FR CNRS 2956, F-59313 Valenciennes, France}

\email{felix.ali-mehmeti@univ-valenciennes.fr}

\author{R. Haller-Dintelmann}

\address{%
TU Darmstadt \\
Fachbereich Mathematik \\
Schlo{\ss}gartenstra{\ss}e 7\\
64289 Darmstadt\\
Germany}

\email{haller@mathematik.tu-darmstadt.de}

\author{V. R\'egnier}

\address{%
Univ Lille Nord de France, F-59000 Lille, France \newline
\indent
 UVHC, LAMAV, FR CNRS 2956, F-59313 Valenciennes, France}

\email{Virginie.Regnier@univ-valenciennes.fr}

\begin{abstract}
We consider the Klein-Gordon equation on a star-shaped network
composed of $n$ half-axes connected at their origins. We add a
potential which is constant but different on each branch. The
corresponding spatial operator is self-adjoint and we state explicit
expressions for its resolvent and its resolution of the identity in
terms of generalized eigenfunctions. This leads to a generalized
Fourier type inversion formula  in terms of an expansion in
generalized eigenfunctions.
Further we  prove the surjectivity of the associated transformation,
thus showing that it is in fact a spectral representation.

The characteristics of the problem are marked by the non-manifold
character of the star-shaped domain. Therefore the approach via the
Sturm-Liouville theory for systems is not well-suited.
The considerable effort to construct explicit formulas involving the
tunnel effect generalized eigenfunctions is justified for example by
the perspective to study the influence of tunnel effect on the
$L^{\infty}$-time decay.
\end{abstract}

\subjclass[2000]{Primary 34B45; Secondary 42A38, 47A10, 47A60, 47A70}

\keywords{networks, spectral theory, resolvent, generalized
eigenfunctions, functional calculus, evolution equations, dynamics
of the tunnel effect, Klein-Gordon equation}

\thanks{Parts of this work were done, while the second author visited
the University of Valenciennes. He wishes to express his gratitude to
F. Ali Mehmeti and the LAMAV for their hospitality}

\maketitle

%
%
%
%
\section{Introduction}
%
%
%
%
\noindent This paper is motivated by the attempt to study the local
behavior of waves near a node in a network of one-dimensional media
having different dispersion properties. This leads to the study of a
star-shaped network with semi-infinite branches. Results in
experimental physics \cite{Nim, hai.nim}, theoretical physics
\cite{D-L} and functional analysis \cite{Alreg3,yas} describe new
phenomena created in this situation by the dynamics of the tunnel
effect: the delayed reflection and advanced transmission near nodes
issuing two branches. It is of major importance for the
comprehension of the vibrations of networks to understand these
phenomena near ramification nodes i.e.~nodes with at least 3
branches. The associated spectral theory induces a considerable
complexity  (as compared with the case of two branches) which is
unraveled in the present paper.

Perturbation arguments do not seem to be suited for the above
mentioned   applications, as they lead to Neumann series in the
solution formulae that hide the interesting effects.

The dynamical problem can be described as follows:

\noindent
Let  $N_1, \cdots, N_n$ be $n$ disjoint copies of $(0,+ \infty)$ ($n \geq 2$).
Consider numbers  $a_k, c_k$ satisfying $0 < c_k$, for $k = 1,
\dots, n$ and $0 \leq a_1 \leq a_2 \leq \ldots \leq a_n < + \infty$.
Find a vector $(u_1, \dots, u_n)$ of functions $u_k: [0, +\infty)
\times \overline{N_k} \rightarrow \C$ satisfying the Klein-Gordon equations
\[ [\partial_t^2 - c_k \partial_x^2   + a_k ] u_k(t,x) = 0 \ , k = 1, \dots, n,
\]
on $N_1, \cdots, N_n$ coupled at zero by usual Kirchhoff conditions
and complemented with initial conditions for the functions $u_k$ and their
derivatives.

Reformulating this as an abstract Cauchy problem, one is confronted with the
self-adjoint operator $A = (-c_k \cdot \partial^2_x + a_k)_{k=1, \dots, n}$
in $L^2(N)$, with a domain that incorporates the Kirchhoff transmission
conditions at zero. For an exact definition of $A$, we refer to
Section~\ref{sec2}.

Invoking functional calculus for this operator, the solution can be given in
terms of
\[e^{\pm i \sqrt{A}t}u_0 \hbox{ and }e^{\pm i \sqrt{A}t}v_0.
\]
The refined study of transient phenomena thus requires concrete
formulae for the spectral representation of $A$. The seemingly
straightforward idea to view this task as a Sturm-Liouville problem
for a system (following \cite{weid2}) is not well-suited, because
the resulting expansion formulae do not take into account the
non-manifold character of the star-shaped domain. The ansatz used in
\cite{weid2} inhibits the exclusive use of generalized
eigenfunctions satisfying the Kirchhoff conditions. This is proved
in Theorem \ref{comp.weidm} in the appendix of this paper, which
furnishes the comparison of the two approaches.

A first attempt to use well-suited generalized eigenfunctions in the
ramified case but without tunnel effect \cite{AHR} leads to a
transformation whose inverse formula is different on each branch.
The desired results for two branches but
with tunnel effect are implicitly included in \cite{weid2}. For $n$
branches but with the same $c_k$ and $a_k$ on all branches a variant
of the above problem has been treated in \cite{Alreg1} using Laplace
transform in $t$.

In the present paper we start by following the lines of
\cite{AHR}. In Section~\ref{sec:exp.eig}, we define $n$ families of
generalized eigenfunctions of $A$, i.e.~formal solutions
$F_\lambda^k$ for $\lambda \in [a_1, + \infty)$ of the equation
             \[AF_\lambda^k = \lambda F_\lambda^k\]
satisfying the Kirchhoff conditions in zero, such that
$e^{\pm i \sqrt{\lambda}t}F_\lambda^k(x)$ represent incoming or
outgoing plane waves on all branches except $N_k$ for $\lambda \in
[a_n, + \infty)$. For $\lambda \in [a_p, a_{p+1})$, $1\leq p <n$ we have
no propagation but exponential decay in $n-p$ branches: this
expresses what we call the multiple tunnel effect, which is new with respect to
\cite{AHR}. Using variation of constants, we derive a formula
for the kernel of the resolvent of $A$ in terms of the
$F_\lambda^k$.

Following the classical procedure, in Section~\ref{app.sto} we
derive a limiting absorption principle for $A$, and then we insert
$A$ in Stone's formula to obtain a representation of the resolution
of the identity of $A$ in terms of the generalized eigenfunctions.

The aim of the paper, attained in Section~\ref{pla.the}, is the
analysis of the Fourier type transformation
\[ (V f)(\lambda):=
\bigl( (V f)_k(\lambda)\bigr)_{k = 1, \ldots, n}
:=\Bigl( \int_N f(x) \overline{(F_{\lambda}^{ k})}(x) dx
\Bigr)_{k = 1, \ldots, n}
\]
in view of constructing its inverse. We show that it diagonalizes
$A$ and determine a metric setting in which it is an isometry. This
permits to express regularity and compatibility of $f$ in terms
of decay of $Vf$.

A major task in this context is to overcome the cyclic structure
that the cyclic nature of the $n$-star induces in the underlying
resolvent formula derived in Section~\ref{sec:exp.eig}. To this end,
we use a symmetrization procedure that is carried out in
Section~\ref{spe.rep}. It combines the expression for the resolution
of the identity $E(a,b)$ found in Section~\ref{app.sto} with an
ansatz for an expansion in generalized eigenfunctions:
\[ f(x) = \int_a^b \sum_{l,m = 1}^n q_{lm}(\lambda)
     F_\lambda^{l}(x) (Vf)_k(\lambda)  \; d \lambda.
\]
This creates a $(3 n^2 + 1) \times n^2$ linear system for the $q_{lm}$,
whose solution leads to the result in Theorem~\ref{maintheo} and to
the Plancherel type formula.

A direct approach to the same symmetrization problem, carried out in
Section~\ref{dir.app}, yields a closed formula for the matrix $q$
based on $n\times n$ matrices. This approach is to a great extent
independent of the special setting and is thus supposed to be
generalizable.

In Section~\ref{pla.the} the desired inversion formula as well as
the Plancherel type theorem are stated. Finally the domains of the
powers of $A$ are characterized using the decay properties of
$Vf$. We show that $V$ is an ordered spectral representation (see
Definition XII.3.15, p. 1216 of \cite{Dun}). The spectrum has $n$
layers and it is $p$-fold on the frequency band $[a_p,a_{p+1})$.
This reflects a kind of continuous Zeemann effect caused by the
constant, semi infinite potentials on the branches $N_j$ given by
the terms $a_j u_j$. On this frequency band the generalized
eigenfunctions have an exponential decay on $n-p$ branches,
expressing the multiple tunnel effect.

Our results are designed to serve as tools in applications
concerning the dynamics of the tunnel effect at ramification nodes.
In particular, we think of retarded reflection (following
\cite{Alreg3,hai.nim}), advanced transmission at barriers (following
\cite{Nim,D-L,yas}), $L^{\infty}$-time decay (following
\cite{fam2,fam3}), the study of more general networks of wave guides
(for example microwave networks \cite{poz}), causality and global
existence for nonlinear hyperbolic equations (following
\cite{Alreg4}) and the generalization to coupled transmission
conditions (following \cite{cm.cbc}).

Finally, let us comment on some related results.
The existing general literature on expansions in generalized
eigenfunctions (\cite{bere,weid,weid2} for example) does not seem to
be helpful for our kind of problem: their constructions start from
an abstractly given spectral representation. But in concrete cases
you do not have an explicit formula for it at the beginning.

In \cite{vbl.evl} the relation of the eigenvalues of the Laplacian
in an $L^{\infty}$-setting on infinite, locally finite networks to
the adjacency operator of the network is studied. The question of
the completeness of the corresponding eigenfunctions, viewed as
generalized eigenfunctions in an $L^2$-setting, seems to be open.

In  \cite{kostr}, the authors consider general networks with
semi-infinite ends. They give a construction to compute generalized
eigenfunctions from the coefficients of the transmission conditions
and the asymptotic behaviour of eigenvalues is studied. The
generality of the aproach, however, does not allow for explicit
inversion formulas for a given family of generalized eigenfunctions.

Spectral theory for the Laplacian on finite networks has been
studied since the 1980ies for example by J.P. Roth, J.v. Below, S.
Nicaise, F. Ali Mehmeti. A list of references can be found in \cite{fam1}.

In \cite{kra.sik} the transport operator is considered on finite
networks. The connection between the spectrum of the adjacency
matrix of the network and the (discrete) spectrum of the transport
operator is established. A generalization to infinite networks is
contained in \cite{dorn}.

For surveys on results on networks and multistructures,
cf.~\cite{Lum,exn}.

Many results have been obtained in spectral theory for elliptic
operators on various types of unbounded structures for example
\cite{werner,crd,AMMM,fam3}, cf.~ especially the references
mentioned in \cite{fam3}.

%
%
%
\section{Data and functional analytic framework} \label{sec2}
%
%
%
%
\noindent Let us introduce some notation which will be used
throughout the rest of the paper.

\medskip

\paragraph{\bf Domain and functions} Let $N_1, \cdots, N_n$
      be $n$ disjoint sets identified with $(0,+ \infty)$ ($n \in \N$,
      $n \geq 2$) and put $N :=
      \bigcup_{k=1}^n \overline{N_k}$, identifying the endpoints $0$, see
      \cite{fam1} for a detailed definition.
     Furthermore, we write
      $[a,b]_{N_k}$ for the interval $[a,b]$ in the branch $N_k$.
      For the notation of functions two viewpoints are used:
      \begin{itemize}
      \item functions $f$ on the object $N$ and $f_k$ is the restriction of
        $f$ to $N_k$.
      \item $n$-tuples of functions on the branches $N_k$; then sometimes we
            write $f = (f_1, \cdots, f_n)$.
      \end{itemize}
\paragraph{\bf Transmission conditions}
      \begin{align*}
        \strut \text{($T_0$): } & (u_k)_{k=1,\dots,n} \in
               \prod_{k=1}^n C^0(\overline{N_k}) \text{ satisfies } u_i(0) =
               u_k(0) \qqs i,k \in \{ 1, \cdots, n \}.
        \intertext{This condition in particular implies that $(u_k)_{k=1, \dots, n}$ may
            be viewed as a well-defined function on $N$.}
        \text{($T_1$): } & (u_k)_{k=1,\dots,n} \in \prod_{k=1}^n
               C^1(\overline{N_k}) \text{ satisfies } \sum_{k=1}^n c_k \cdot
               \partial_x u_k(0^+) = 0.
      \end{align*}
\paragraph{\bf Definition of the operator} Define the real Hilbert
      space $H = \prod_{k=1}^n L^2(N_k)$ with scalar product
      \[ (u,v)_H = \sum_{k=1}^n (u_k,v_k)_{L^2(N_k)}
      \]
      and the operator $A: D(A) \longrightarrow H$ by
      \[ \begin{aligned}
            D(A) &= \Bigl\{ (u_k)_{k=1, \dots, n} \in \prod_{k=1}^n H^2(N_k) :
        (u_k)_{k=1, \dots, n} \text{ satisfies } (T_0) \text{ and }
        (T_1) \Bigr\}, \\
            A((u_k)_{k=1, \cdots, n}) &= (A_ku_k)_{k=1, \cdots, n} = (-c_k
        \cdot \partial^2_x u_k + a_k u_k )_{k=1, \cdots, n}.
         \end{aligned}
      \]
      Note that, if $c_k = 1$ and $a_k=0$ for every $k \in \{1, \cdots, n \}$,
      $A$ is the Laplacian in the sense of the existing literature, cf.
    \cite{vbl.evl, kostr}.
\begin{prop}    \label{selfadjoint}
The operator $A: D(A) \rightarrow H$ defined above is self-adjoint
and satisfies $\sigma(A) \subset [a_1,+\infty)$.
\end{prop}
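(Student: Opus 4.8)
The plan is to prove self-adjointness by first establishing that $A$ is symmetric, then showing it is closed with equal deficiency indices (or, more directly, that $A$ is essentially self-adjoint on a suitable core, or that $A \pm i$ are surjective). The cleanest route for an explicitly-given second-order operator with boundary/transmission conditions is to verify symmetry via integration by parts and then show $\mathrm{Ran}(A \pm i) = H$, which by the basic criterion yields self-adjointness directly.

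First I would show symmetry. For $u, v \in D(A)$, I compute
\[
(Au, v)_H - (u, Av)_H = \sum_{k=1}^n \int_0^\infty \bigl( -c_k u_k'' \overline{v_k} + c_k u_k \overline{v_k''} \bigr)\, dx,
\]
since the diagonal potential terms $a_k u_k \overline{v_k}$ and $a_k u_k \overline{v_k}$ cancel (here $a_k$ is real). Integrating by parts twice on each branch, the interior terms cancel and one is left with boundary contributions at $x = 0$ and $x = +\infty$. The $H^2$-regularity forces the functions and their derivatives to vanish at $+\infty$ (in the relevant boundary-term sense), so only the node at $x=0$ survives, producing a term of the form
\[
\sum_{k=1}^n c_k \bigl( u_k'(0^+)\overline{v_k(0)} - u_k(0)\overline{v_k'(0^+)} \bigr).
\]
Invoking $(T_0)$ (the common value $u_i(0)$, $v_i(0)$ at the node) together with $(T_1)$ (the Kirchhoff condition $\sum_k c_k u_k'(0^+) = 0$, and likewise for $v$), this boundary sum collapses to zero. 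Hence $A$ is symmetric.

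Next I would establish self-adjointness and the spectral bound simultaneously by showing that $A + \mu$ is boundedly invertible on $H$ for every real $\mu > -a_1$, equivalently that $A - \lambda$ is bijective from $D(A)$ onto $H$ for $\lambda \notin [a_1, +\infty)$. Given $f \in H$, I solve $(A - \lambda)u = f$ branchwise: on each $N_k$ this is the inhomogeneous ODE $-c_k u_k'' + (a_k - \lambda) u_k = f_k$, whose $L^2(0,\infty)$-solutions are determined up to the one decaying homogeneous solution $e^{-\sqrt{(a_k-\lambda)/c_k}\,x}$ (choosing the branch of the root with positive real part, which is legitimate precisely when $\lambda \notin [a_k, +\infty) \supset [a_1,+\infty)$). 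The free coefficients, one per branch, are then fixed by the $n$ transmission conditions $(T_0)$ and $(T_1)$; solvability reduces to the invertibility of a small linear system. One checks this system has a nonzero determinant for $\lambda$ off the spectrum, giving a unique $u \in D(A)$ and a bounded inverse. Establishing $\sigma(A) \subset [a_1,+\infty)$ then follows from this resolvent construction, while self-adjointness follows because $A$ is symmetric with $\mathrm{Ran}(A - \lambda) = H$ for some (indeed all) non-real $\lambda$.

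The main obstacle I expect is the transmission-condition bookkeeping at the node: verifying that the boundary terms genuinely cancel requires care in matching the two viewpoints on functions at $x=0$ (the common value from $(T_0)$ versus the flux condition $(T_1)$), and confirming that the $H^2$-functions and their first derivatives decay appropriately so that the contributions at $+\infty$ vanish. For the surjectivity step, the delicate point is checking that the coefficient matrix arising from imposing $(T_0)$ and $(T_1)$ on the decaying solutions is nonsingular off $[a_1,+\infty)$; this is where the strict positivity of the $c_k$ and the real-valuedness of the $a_k$ enter decisively, and it is essentially the same determinant that will reappear (now on the spectrum, with an absorption parameter) in the resolvent analysis of Section~\ref{sec:exp.eig}.
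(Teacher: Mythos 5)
Your proposal is correct, but it takes a genuinely different route from the paper. The paper argues via the form method: it introduces $V = \bigl\{ u \in \prod_k H^1(N_k) : u \text{ satisfies } (T_0) \bigr\}$ and the form $a_\eps(u,v) = \sum_k \bigl[ c_k (\partial_x u_k, \partial_x v_k) + (a_k + \eps - a_1)(u_k,v_k) \bigr]$, notes the coercivity $a_\eps(u,u) \ge C (u,u)_V$, and identifies (by partial integration) the Friedrichs extension of $(a_\eps, V, H)$ with $(A + (\eps - a_1)I, D(A))$; self-adjointness and positivity of $A + (\eps - a_1)I$ for every $\eps > 0$ then yield both assertions simultaneously. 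You instead verify symmetry by integration by parts and prove surjectivity of $A - \lambda$ for $\lambda \notin [a_1,+\infty)$ by an explicit branchwise resolvent construction. Both are sound. The form method hides the ODE bookkeeping, at the price of the (routine in one dimension, but not free) identification of the Friedrichs domain with $D(A)$; your route costs you the determinant check, but that check is painless: imposing $(T_0)$ and $(T_1)$ on $u_k = u_k^{\mathrm{part}} + \alpha_k e^{-\mu_k x}$ with $\mu_k = \sqrt{(a_k - \lambda)/c_k}$ reduces the $n \times n$ system to the single condition $\sum_k c_k \mu_k \neq 0$, which holds because every summand has strictly positive real part when $\lambda \notin [a_1,+\infty)$. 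This quantity is (up to a factor $\pm i$) exactly the Wronskian $w(\lambda)$ of Definition~\ref{def.K}, and your nonvanishing argument is the off-spectrum analogue of Lemma~\ref{w.esti}, so your approach has the side benefit of anticipating the resolvent formula of Section~\ref{sec:exp.eig}. One trivial slip: the inclusion should read $[a_k,+\infty) \subseteq [a_1,+\infty)$ (since $a_k \ge a_1$), which is what guarantees that $\lambda \notin [a_1,+\infty)$ forces $\lambda \notin [a_k,+\infty)$ for every $k$.
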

\begin{proof}
Consider the Hilbert space
\[ V = \biggl\{ (u_k)_{k=1, \dots, n} \in \prod_{k=1}^n H^1(N_k) : (u_k)_{k=1, \dots, n}
\text{ satisfies } (T_0) \biggr\}
\]
with the canonical scalar product $(\cdot, \cdot)_V$. Then the
bilinear form associated with $A +(\eps - a_1)I$ is $a_\eps : V
\times V \to \C$ with
\[ a_\eps(u,v) = \sum_{k=1}^n \bigl[ c_k (\partial_x u_k, \partial_x
v_k)_{L^2(N_k)} + (a_k + \eps - a_1)(u_k, v_k)_{L^2(N_k)} \bigr].
\]
Then clearly there is a $C > 0$ with $a_\eps(u,u) \ge C (u,u)_V$ for
all $u \in V$ and all $\eps > 0$. By partial integration one shows
that the Friedrichs extension of $(a_\eps, V, H)$ is $(A + (\eps -
a_1)I, D(A))$. Thus the operator $A + (\eps - a_1)I$ is self-adjoint
and positive. Hence $\sigma(A + (\eps - a_1)I) \subset [0, +\infty)$
for all $\eps > 0$, what implies the assertion on the spectrum.
\end{proof}

%
%
%
%
\section{Expansion in generalized eigenfunctions} \label{sec:exp.eig}
%
%
%
%
\noindent
The aim of this section is to find an explicit expression for the
kernel of the resolvent of the operator $A$ on the star-shaped
network defined in the previous section.
%
\begin{defi}
An element $f \in \prod_{k=1}^{n} C^{\infty}(\overline{N_k})$ is
called \emph{generalized eigenfunction} of $A$, if it satisfies
$(T_0)$, $(T_1)$ and formally the differential equation $Af =
\lambda f$ for some $\lambda \in \C$.
\end{defi}
%
\begin{lem}[Green's formula on the star-shaped network] \label{greens.formula}
Denote by $V_{l_1, \cdots, l_n}$ the subset of the network $N$ defined
by
\[ V_{l_1,...,l_n} = \{ 0 \} \cup \bigcup_{k=1}^n (0, l_k)_{N_k}.
\]
Then $u, v \in D(A)$ implies
\[ \int_{V_{l_1, \cdots, l_n}} \!\!\!\!\! u''(x) v(x) \; dx =
   \int_{V_{l_1, \cdots, l_n}} \!\!\!\!\! u(x) v''(x) \; dx - \sum_{k=1}^n
   u(l_k) v'(l_k) + \sum_{k=1}^n u'(l_k) v(l_k).
\]
\end{lem}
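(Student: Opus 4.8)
The plan is to reduce the network identity to ordinary one-dimensional integration by parts, carried out branch by branch, and then to exploit the transmission conditions built into $D(A)$ to remove the boundary contributions at the central node. Since each $V_{l_1,\dots,l_n}$ is truncated at finite lengths $l_k$, no decay at infinity is needed and every integral is over a compact interval, so the computation on each branch is purely elementary.

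First I would fix $k$, parametrize $N_k$ by $(0,+\infty)$ with the node sitting at $0^+$, and note that $u_k,v_k \in H^2(N_k)$ restrict to $C^1([0,l_k])$. Integrating by parts twice on the finite interval $[0,l_k]$ gives the per-branch identity
\[ \int_0^{l_k} u_k''(x) v_k(x)\,dx = \int_0^{l_k} u_k(x) v_k''(x)\,dx + \bigl[\, u_k'(x) v_k(x) - u_k(x) v_k'(x) \,\bigr]_{x=0}^{x=l_k}. \]
Summing over $k = 1,\dots,n$, the integrals reassemble into $\int_{V_{l_1,\dots,l_n}} u'' v\,dx$ and $\int_{V_{l_1,\dots,l_n}} u v''\,dx$, while the evaluations at $x=l_k$ produce exactly $\sum_{k=1}^n u'(l_k) v(l_k) - \sum_{k=1}^n u(l_k) v'(l_k)$, i.e.~the two outer-boundary sums in the statement.

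It remains to show that the contributions collected at the central node cancel, and this is the only place where the non-manifold structure of $N$ really enters. The node terms add up to $-\sum_{k=1}^n \bigl[ u_k'(0^+) v_k(0^+) - u_k(0^+) v_k'(0^+) \bigr]$. Using the continuity condition $(T_0)$, all branch traces coincide, $u_k(0^+) = u(0)$ and $v_k(0^+) = v(0)$ for every $k$, so this expression factors into the common node values times sums of the one-sided derivative traces $u_k'(0^+)$ and $v_k'(0^+)$. The Kirchhoff condition $(T_1)$ is then precisely what annihilates this remaining node contribution, leaving the asserted formula.

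The main obstacle is exactly this node cancellation: the interior calculus is routine, but the vanishing of the terms at $0$ is the whole content of the lemma, and it is where one must use the precise form of both transmission conditions, $(T_0)$ to identify the traces and $(T_1)$ to kill the combination of derivative traces. This is the step that would fail for functions not respecting both conditions, which is why the hypothesis $u,v \in D(A)$ cannot be weakened.
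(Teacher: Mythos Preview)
Your argument is essentially identical to the paper's: integrate by parts twice on each branch, sum, then invoke $(T_0)$ to factor out the common node value and $(T_1)$ to annihilate the remaining sums of derivative traces.

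One point both you and the paper's proof pass over without comment: the condition $(T_1)$ is the \emph{weighted} Kirchhoff condition $\sum_k c_k\,\partial_x u_k(0^+)=0$, whereas after applying $(T_0)$ the node contribution you obtain is $v(0)\sum_k u_k'(0^+) - u(0)\sum_k v_k'(0^+)$ with \emph{unweighted} sums. So the cancellation as written only goes through literally when all $c_k$ coincide; for general $c_k$ the identity should really carry the weights (i.e.\ replace $u''$ by $c_k u_k''$ on each branch), which is in fact how the lemma is effectively used downstream in the proof of Proposition~\ref{expr.resol}. This is not a flaw in your strategy relative to the paper---you reproduce the paper's argument exactly---but it is worth being aware that the clean statement of the lemma tacitly hides this weighting issue.
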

%
%
\begin{proof}
Two successive integrations by parts are used and since both $u$ and
$v$ belong to $D(A)$, they both satisfy the transmission conditions
$(T_0)$ and $(T_1)$. So
\[ \sum_{k=1}^n u_k(0) v'_k(0)= u_1(0) \sum_{k=1}^n v'_k(0) = 0.
\]
Idem for $\sum_{k=1}^n u'_k(0) v_k(0)$.
\end{proof}
This Green formula yields now as usual an expression for the resolvent of
$A$ in terms of the generalized eigenfunctions.
%
\begin{prop}
\label{expr.resol} Let $\lambda \in \rho (A)$ be fixed and let
$e^{\lambda}_1$, $e^{\lambda}_2$ be generalized eigenfunctions of
$A$, such that the Wronskian $w_{1,2}^{\lambda}(x)$ satisfies for
every $x$ in $N$
\[ w_{1,2}^{\lambda}(x) = \det W(e^{\lambda}_1(x),e^{\lambda}_2(x)) =
   e^{\lambda}_1(x) \cdot (e_2^{\lambda})'(x) - (e_1^{\lambda})'(x) \cdot
   e^{\lambda}_2(x) \neq 0.
\]
If for some $k \in \{1, \dots, n \}$ we have $e_2^\lambda |_{N_k} \in
H^2(N_k)$ and $e_1^{\lambda}|_{N_m} \in H^2(N_m)$ for all $m \neq k$, then for
every $f \in H$ and $x \in N_k$
\begin{equation} \label{E}
  [R(\lambda, A) f](x) = \frac{1}{c_k w_{1,2}^{\lambda}(x)} \cdot
    \left[ \int_{(x,+\infty)_{N_k}} \hspace{-.5cm} e^{\lambda}_1(x)
    e^{\lambda}_2(x') f(x') \; dx' + \int_{N \setminus (x,+\infty)_{N_k}}
    \hspace{-.8cm} e^{\lambda}_2(x) e^{\lambda}_1(x') f(x') \; dx'
    \right].
\end{equation}
\end{prop}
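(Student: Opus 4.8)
The plan is to exploit the \emph{already-established} existence of the resolvent together with a Green-type identity to read off the value of $u:=R(\lambda,A)f$ at a single point, rather than to guess a kernel and verify it a posteriori. Since $\lambda\in\rho(A)$, the function $u$ lies in $D(A)$ and solves $(\lambda I-A)u=f$, so on each branch $N_j$ one has $u_j''=\frac{a_j-\lambda}{c_j}u_j-\frac1{c_j}f_j$, while each generalized eigenfunction satisfies $(e_i^\lambda)_j''=\frac{a_j-\lambda}{c_j}(e_i^\lambda)_j$. Hence $c_j\bigl(u_j''\,(e_i^\lambda)_j-u_j\,(e_i^\lambda)_j''\bigr)=-f_j\,(e_i^\lambda)_j$, so that the weight $c_j$ appearing in the integration-by-parts identity adapted to $A$ (the weighted Green identity underlying the self-adjointness statement of Proposition~\ref{selfadjoint}, whose node terms cancel via $(T_0)$, $(T_1)$ exactly as in Lemma~\ref{greens.formula}) precisely absorbs the factor $1/c_j$ and leaves an \emph{unweighted} integral of $f$ against the eigenfunction. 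I also record at the outset that, by Abel's theorem, $w_{1,2}^\lambda$ is constant on each branch, hence a nonzero constant on $N_k$ by hypothesis; this is what will be invertible at the end.

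Next I fix $x_0\in N_k$ and split $N$ into the outgoing tail $(x_0,+\infty)_{N_k}$ and its complement $N\setminus(x_0,+\infty)_{N_k}$. On the complement I apply the weighted Green identity to the pair $(u,e_1^\lambda)$, and on the tail to the pair $(u,e_2^\lambda)$. In each region the bulk term collapses to $\mp\int f\,e_i^\lambda$ by the previous paragraph, and the only surviving boundary contributions are the Wronskian-like expressions $c_k\bigl(u_k'(x_0)(e_i^\lambda)_k(x_0)-u_k(x_0)(e_i^\lambda)_k'(x_0)\bigr)$ at $x_0$, carried with opposite orientations from the two sides. This yields two linear equations for the pair $\bigl(u_k(x_0),u_k'(x_0)\bigr)$; eliminating $u_k'(x_0)$ with the help of the nonzero Wronskian gives $u_k(x_0)$ in exactly the form claimed in \eqref{E}, the tail integral producing the $e_1^\lambda(x_0)\,e_2^\lambda(x')$ term and the complement integral the $e_2^\lambda(x_0)\,e_1^\lambda(x')$ term. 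Convergence of all integrals is routine: on each branch the relevant eigenfunction is assumed to lie in $H^2$, hence in $L^2$, so Cauchy--Schwarz against $f\in H$ applies.

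The main obstacle is the simultaneous control of the boundary terms, which is precisely where the hypotheses and the star geometry enter. At the node the contributions from all $n$ branches must cancel; this relies on both $u$ and each $e_i^\lambda$ satisfying the Kirchhoff conditions, so that the weighted sums $\sum_j c_j u_j'(0)$ and $\sum_j c_j (e_i^\lambda)_j'(0)$ vanish, as in Lemma~\ref{greens.formula}. At infinity the boundary terms must disappear in the limit of the truncation length $l\to+\infty$: here one needs $u_j,u_j'\to0$ (from $u\in\prod_j H^2(N_j)$) together with the boundedness and decay of the \emph{paired} eigenfunction on each branch. This dictates the pairing and is the crux: $e_2^\lambda$ may be integrated against $u$ only on $N_k$, where $e_2^\lambda\in H^2(N_k)$, while $e_1^\lambda$ must be used on every branch $N_m$ with $m\neq k$, where $e_1^\lambda\in H^2(N_m)$ — which is exactly the content of the hypotheses. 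Making this truncation-and-limit argument rigorous on the non-compact branches, and checking that no cross term survives at infinity on the $n-1$ "wrong" branches, is the step that requires the most care.
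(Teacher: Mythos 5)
Your argument is correct and is essentially the paper's own proof: both rest on Green's formula applied separately on $(x,+\infty)_{N_k}$ and its complement, with the Kirchhoff conditions cancelling the node contributions, the $H^2$ hypotheses on the paired eigenfunctions killing the boundary terms at infinity, and the surviving Wronskian terms at $x$ producing the factor $c_k w_{1,2}^{\lambda}(x)$. The paper organizes this as checking that the integral operator is a left inverse of $\lambda I - A$ on $D(A)$, whereas you apply the same identities directly to $u = R(\lambda,A)f$ and eliminate $u_k'(x)$ --- the identical computation up to bookkeeping (note only the harmless sign slip: $(\lambda I - A)u = f$ gives $u_j'' = \frac{a_j-\lambda}{c_j}u_j + \frac{1}{c_j}f_j$, not $-\frac{1}{c_j}f_j$).
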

%
\noindent Note that by the integral over $N$, we mean the sum of the integrals
over $N_k$, $k=1, \cdots, n$.
\begin{proof}
Let $\lambda \in \rho (A)$. We shall show that the integral operator
defined by the right-hand side of (\ref{E}) is a left inverse of
$\lambda I - A$. Let $u \in D(A)$ and $x \in N_k$. Then
\begin{align*}
    I_\lambda :=& \int_{(x,+\infty)_{N_k}} e^{\lambda}_1(x) e^{\lambda}_2(x')
        (\lambda I - A) u(x')   \; dx' +  \int_{N \setminus (x,+\infty)_{N_k}}
    e^{\lambda}_2(x) e^{\lambda}_1(x') (\lambda I - A) u(x') \; dx' \\
   =& \; e^{\lambda}_1(x) \lim_{l_k \to \infty}
    \int_{x}^{l_k}e^{\lambda}_2(x')(\lambda I - A) u(x') \; dx'
%
%
%
%
   + e^{\lambda}_2(x) \lim_{l_m \to \infty, m \not= k}
    \int_{ V_{l_1,\ldots,l_{k-1},x,l_{k+1},\ldots,l_n}}
\!\!\!\!\!  \!\!\!\!\!  \!\!\!\!\! \!\!\!\!\! \!\!\!\!\! \!\!\!\!\!
e^{\lambda}_1(x')(\lambda I - A) u(x') \; dx',
\end{align*}
due to the dominated convergence Theorem, the integrands being in $L^1(\R)$ by
the hypotheses.

We have $u \in D(A) \subset \prod_{j=1}^n H^2(N_j)$ and
\[ e^{\lambda}_2 \vert_{N_k} \in H^2(N_k), \quad e^{\lambda}_1 \vert_{N_m} \in
    H^2(N_m), \ m \neq k
\]
 by hypothesis and thus
\begin{alignat*}{2}
 \partial_x u\vert_{N_k}(x)\cdot e^{\lambda}_2 \vert_{N_k}(x)
    &\mathop{\longrightarrow}_{x \to +\infty} 0, \qquad \qquad
    u\vert_{N_k}(x)\cdot \partial_x e^{\lambda}_2 \vert_{N_k}(x)
    &&\mathop{\longrightarrow}_{x \to +\infty} 0, \\
 \partial_x u\vert_{N_m}(x)\cdot e^{\lambda}_1 \vert_{N_m}(x)
    &\mathop{\longrightarrow}_{x \to +\infty} 0, \qquad \qquad
    u\vert_{N_m}(x)\cdot \partial_x e^{\lambda}_1 \vert_{N_m}(x)
    &&\mathop{\longrightarrow}_{x \to +\infty} 0, \ m \neq k,
\end{alignat*}
all products being in some $H^2(N_j)$. Recall that
\[ \int_{a}^{b} f'' g  = \int_{a}^{b} f g'' - f(b)g'(b) + f'(b)g(b) +
    f(a)g'(a) - f'(a)g(a)
\]
for $f,g \in H^2((a,b))$.
Now Lemma~\ref{greens.formula} and $(\lambda I - A)e^{\lambda}_r = 0$ for
$r =1 ,2$ imply
\begin{align*}
  I_\lambda  &= e^{\lambda}_1(x)
    \lim_{l_k \to \infty} \Bigl[ \int_{x}^{l_k} (\lambda I - A)
    e^{\lambda}_2(x') u(x') \; dx' \\
  & \qquad \strut + c_k \Bigl( -u(l_k) \partial_x e^{\lambda}_2(l_k) +
    \partial_x u(l_k) e^{\lambda}_2(l_k) + u(x) \partial_x
    e^{\lambda}_2(x) - \partial_x u(x) e^{\lambda}_2(x) \Bigr) \Bigr] \\
  & \qquad \strut + e^{\lambda}_2(x) \Bigl[ \lim_{l_m \to \infty, m \neq k}
    \int_{V_{l_1,\ldots,l_{k-1},x,l_{k+1},\ldots,l_n}} (\lambda I - A)
    e^{\lambda}_1(x') u(x') \; dx' \\
  & \qquad \strut + \sum_{j \neq k} c_j \lim_{l_j \to \infty} \Bigl( -u(l_j)
    \partial_x e^{\lambda}_1(l_j) + \partial_x u(l_j) e^{\lambda}_1(l_j)
    \Bigr) + c_k \Bigl( -u(x) \partial_x e^{\lambda}_1(x) + \partial_x
    u(x) e^{\lambda}_1(x) \Bigr) \Bigr] \displaybreak[0]\\
  &= c_k \Bigl(  e^{\lambda}_1(x) \partial_x e^{\lambda}_ 2(x) - \partial_x
    e^{\lambda}_1(x) e^{\lambda}_2(x) \Bigr) u(x) \\
  &= c_k w^{\lambda}_{1,2}(x) u(x).
\end{align*}
Now the invertibility of $\lambda I - A$ implies the result.
\end{proof}
%
%
%
\begin{defi}[Generalized eigenfunctions of $A$]  \label{gen.eig}
For $k \in \{ 1, \dots, n\}$ and $\lambda \in \C$ let
\[ \xi_k(\lambda) := \sqrt{\frac{\lambda - a_k}{c_k}} \qquad \text{and} \qquad
   s_k := - \frac{\sum_{l \neq k} c_l \xi_l(\lambda)}{c_k \xi_k(\lambda)}.
\]
Here, and in all what follows, the complex square root is chosen in such a way
that $\sqrt{r \cdot e^{i \phi}} = \sqrt{r} e^{i \phi/2}$ with $r>0$ and $\phi
\in [-\pi,\pi)$.

For $\lambda \in \C$ and $j,k \in \{ 1, \cdots, n\}$, $F_{\lambda}^{\pm,j}: N \rightarrow \C$ is defined for
$x \in \overline{N_k}$ by $F_{\lambda}^{\pm,j}(x) :=
F_{\lambda,k}^{\pm,j}(x)$ with
\[ \left\{ \begin{aligned}
       F_{\lambda,j}^{\pm,j}(x) &= \cos(\xi_j( \lambda) x ) \pm i
       s_j( \lambda)   \sin(\xi_j( \lambda) x ), & \\
      F_{\lambda,k}^{\pm,j}(x) &= \exp(\pm i \xi_k(\lambda)x), &
          \text{for } k \neq j.
   \end{aligned} \right.
\]
\end{defi}
%
%
\begin{rem}\label{rem.eig}
\begin{itemize}
\item $F_{\lambda}^{\pm,j}$ satisfies the transmission conditions $(T_0)$ and
      $(T_1)$ and
      formally it holds $AF_{\lambda}^{\pm,j}= \lambda
      F_{\lambda}^{\pm,j}$. Thus it is a generalized eigenfunction of $A$,
      but clearly $F_{\lambda}^{\pm,j}$ does not belong to $H$, so it is
      not a classical eigenfunction.
\item For $\Im (\lambda) \neq 0$, the function $F_{\lambda,k}^{\pm,j}$, where
      the $+$-sign (respectively $-$-sign) is chosen if $\Im (\lambda) > 0$
      (respectively $\Im (\lambda) < 0$), belongs to
      $H^2(N_k)$ for $k \neq j$. This feature is used in the formula for the
      resolvent of $A$.
\end{itemize}
\end{rem}
%
%
\begin{defi}[Kernel of the resolvent] \label{def.K}
Let $w:\C \to \C$ be defined by $w(\lambda) := \pm i \cdot
\sum_{j=1}^n c_j \xi_j (\lambda)$. For any $\lambda \in \C$ such
that $w(\lambda) \neq 0$, $j \in \{ 1, \cdots, n \}$, and for every
$x \in \overline{N_j}$, we define
\[ K(x,x',\lambda) = \left\{ \begin{aligned}
        \frac{1}{w(\lambda)} F^{\pm,j}_{\lambda,j}(x)
              F^{\pm,j+1}_{\lambda,j}(x'), & \text{ for } x' \in \overline{N_j},
              \, x'> x, \\
        \frac{1}{w(\lambda)} F^{\pm,j+1}_{\lambda,j}(x)
              F^{\pm,j}_{\lambda}(x'), & \text{ for } x' \in \overline{N_k}, k
              \neq j \text{ or } x'\in \overline{N_j}, \, x'< x.
        \end{aligned} \right.
\]
In the whole formula $+$ (respectively $-$) is chosen, if
$\Im(\lambda) > 0$ (respectively $\Im (\lambda) \leq 0$). Finally,
the index $j$ is to be understood modulo $n$, that is to say, if
$j=n$, then $j+1 = 1$.
\end{defi}
%

%
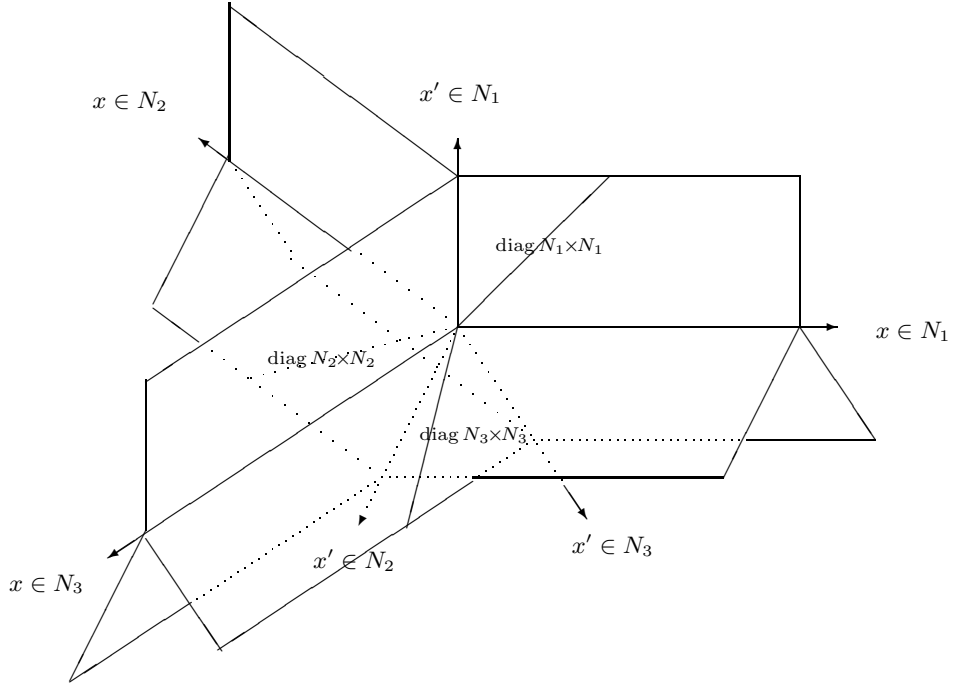
\begin{figure}

\vspace*{3cm}
\hspace*{-4cm}
\parbox{12cm}{

\unitlength=1mm \special{em:linewidth 0.4pt} \linethickness{0.4pt}
\begin{picture}(85.00,101.33)

\multiput(80.00,80)(-1.6,-0.4){18}{\rule{0.10\unitlength}{0.10\unitlength}}

\multiput(70,60.00)(-1.6,1.2){15}{\rule{0.10\unitlength}{0.10\unitlength}}

\multiput(80.00,80.00)(-1.6,1.2){9}{\rule{0.10\unitlength}{0.10\unitlength}}

\multiput(70.00,60.00)(-0.9,-0.6){28}{\rule{0.10\unitlength}{0.10\unitlength}}

\multiput(90.00,65.00)(-0.9,-0.6){9}{\rule{0.10\unitlength}{0.10\unitlength}}

\multiput(90.00,65.00)(1,0){28}{\rule{0.10\unitlength}{0.10\unitlength}}

\multiput(80.00,80.00)(-0.5,-1){25}{\rule{0.10\unitlength}{0.10\unitlength}}

\put(67.70,55.00){\vector(-1,-2){0.7}}

\multiput(50,102)(0.8,-1.2){11}{\rule{0.10\unitlength}{0.10\unitlength}}

\multiput(80.00,80.00)(0.8,-1.2){18}{\rule{0.10\unitlength}{0.10\unitlength}}

\put(94.00,59.00){\vector(2,-3){3.00}}

\multiput(90.00,65.00)(-1.6,1.2){21}{\rule{0.10\unitlength}{0.10\unitlength}}

\put(125.00,80.00){\line(0,1){20.00}}
\put(50.00,102.00){\line(0,1){21.00}}
\put(39.00,53.00){\line(0,1){20.00}}
\put(80.00,80.00){\vector(0,1){25.00}}

\put(125.00,80.00){\line(2,-3){10.00}}
\put(39.00,52.00){\line(2,-3){10.00}}

\put(125.00,80.00){\line(-1,-2){10.00}}
\put(50.00,103.00){\line(-1,-2){10.00}}
\put(39.00,53.00){\line(-1,-2){10.00}}

\put(80.00,80.00){\vector(1,0){50.00}}
\put(80.00,100.00){\line(1,0){45.00}}
\put(118.00,65.00){\line(1,0){17.00}}
\put(82.00,60.00){\line(1,0){33.00}}
\multiput(70.00,60.00)(1,0){12}{\rule{0.10\unitlength}{0.10\unitlength}}

\put(66.00,90.00){\vector(-4,3){20.00}}
\put(80.00,100.00){\line(-4,3){30.00}}
\put(46.00,78.00){\line(-4,3){6.00}}

\put(80.00,80.00){\vector(-3,-2){46.00}}
\put(80.00,100.00){\line(-3,-2){41.00}}
\put(82.00,59.50){\line(-3,-2){33.50}}

\put(45.00,43.50){\line(-3,-2){16.0}}

\put(80.00,80.00){\line(-1,-4){6.67}}
\put(80.00,80.00){\line(1,1){20.00}}

\footnotesize

\put(135,78.5){$x \in N_1$}
\put(75,110){$x' \in N_1$}
\put(32,109){$x \in N_2$}
\put(21,45){$x \in N_3$}
\put(95,50){$x'\in N_3$}
\put(61,48){$x' \in N_2$}
\tiny
\put(85,90){diag$\, N_1\hspace*{-1mm}\times \hspace*{-1mm}N_1$}
\put(55,75){diag$\, N_2 \hspace*{-1mm}\times \hspace*{-1mm}N_2$}
\put(75,65){diag$\, N_3 \hspace*{-1mm}\times \hspace*{-1mm}N_3$}

\end{picture}

                   }
\vspace*{-3cm}

   \caption{$N \hspace*{-1mm}\times \hspace*{-1mm}N$ in the case $n=3$}
   \label{NxN}
\end{figure}
Figure~\ref{NxN} shows the domain of the kernel
$K(\cdot,\cdot,\lambda)$ in the case $n=3$ with its three main
diagonals, where the kernel is not smooth.
We will show in Theorem~\ref{theo1} that $K$ is indeed the kernel of
the resolvent of $A$. In order to do so, we collect some useful
observations in the following lemma.

Note that in particular, if $c_j = c$ and $a_j = 0$ for all $j \in
\{ 1, \dots, n \}$, then $w(\lambda) = \pm i n c \sqrt{\lambda}$ for
all $j \in \{ 1, \dots, n \}$, which only vanishes for $\lambda=0$.
On the other hand, if there exist $i$ and $j$ in $\{ 1, \ldots, n
\}$, such that $a_i \neq a_j$, then it is clear that $w(\lambda)$
never vanishes on $\R$, but we need to know, if it vanishes on $\C$.

\begin{lem} \label{w.esti}
\begin{enumerate}
\item \label{enum:w.esti:1} For $a_1 \leq \lambda$ and $\eps \geq 0$ holds
    $| w(\lambda - i \eps) |^2 \geq \sum_{j=1}^n c_j |\lambda - a_j|$.
\item \label{enum:w.esti:2} For $\lambda \in \rho(A)$ such that $\Re(\lambda)
    \geq a_1$, the Wronskian $w$ only vanishes at $\lambda = \alpha$, if
    $a_k = \alpha$ for all $k \in \{ 1, \ldots, n \}$.
\end{enumerate}
\end{lem}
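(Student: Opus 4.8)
The plan is to reduce the whole lemma to a single quantity. Since the prefactor $\pm i$ in $w$ has modulus one, and since each $c_j>0$ is a positive real scalar (so that dividing by $c_j$ does not change an argument and the chosen branch is preserved), one has $c_j\xi_j(\lambda)=\sqrt{c_j}\,\sqrt{\lambda-a_j}$ and hence $|w(\lambda)|^2=\bigl|\sum_{j=1}^n \sqrt{c_j}\,\sqrt{\lambda-a_j}\bigr|^2$. Both assertions thereby become statements about the real and imaginary parts of the numbers $\sqrt{\lambda-a_j}$, and the decisive input is the location of $\sqrt{\cdot}$ dictated by the branch cut.

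For part~\eqref{enum:w.esti:1} I would first record this sign information. Writing $z_j:=(\lambda-a_j)-i\eps$, the hypothesis $\eps\geq 0$ gives $\Im(z_j)\leq 0$, so $\arg(z_j)\in[-\pi,0]$ (the negative real axis receiving argument $-\pi$ under the convention $\phi\in[-\pi,\pi)$). Halving arguments, $\arg(\sqrt{z_j})\in[-\pi/2,0]$, whence $\Re\sqrt{z_j}\geq 0$ and $\Im\sqrt{z_j}\leq 0$ for \emph{every} $j$ simultaneously. I would then expand
\[ |w(\lambda-i\eps)|^2 = \Bigl(\sum_j \sqrt{c_j}\,\Re\sqrt{z_j}\Bigr)^2 + \Bigl(\sum_j \sqrt{c_j}\,\Im\sqrt{z_j}\Bigr)^2, \]
and separate diagonal from off-diagonal contributions. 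The diagonal equals $\sum_j c_j\bigl((\Re\sqrt{z_j})^2+(\Im\sqrt{z_j})^2\bigr)=\sum_j c_j|z_j|\geq \sum_j c_j|\lambda-a_j|$, while every off-diagonal term is a product of two real parts (both $\geq 0$) or of two imaginary parts (both $\leq 0$), hence non-negative. Dropping the non-negative cross terms yields the claimed inequality.

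For part~\eqref{enum:w.esti:2} I would split according to $\Im(\lambda)$. If $\Im(\lambda)=0$, i.e.\ $\lambda$ is real with $\lambda\geq a_1$, part~\eqref{enum:w.esti:1} with $\eps=0$ gives $|w(\lambda)|^2\geq \sum_j c_j|\lambda-a_j|$; since each $c_j>0$, vanishing of $w$ forces $\lambda=a_j$ for all $j$, that is $a_1=\dots=a_n=\lambda=:\alpha$, exactly the degenerate configuration of the statement. If $\Im(\lambda)\neq 0$, then all numbers $\lambda-a_j$ share one and the same non-zero imaginary part, so they lie strictly in a common open half-plane; by the same halving-of-arguments reasoning the imaginary parts $\Im\sqrt{\lambda-a_j}$ then all carry one common strict sign, making $\Im\bigl(\sum_j \sqrt{c_j}\,\sqrt{\lambda-a_j}\bigr)\neq 0$ and hence $w(\lambda)\neq 0$. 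Combining the two cases gives the assertion.

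The step I expect to require the most care is the bookkeeping of the branch of the complex square root in the boundary cases, in particular $\lambda-a_j<0$ with $\eps=0$, where $z_j$ sits exactly on the negative real axis and $\sqrt{z_j}=-i\sqrt{|\lambda-a_j|}$ lands on the negative imaginary axis, still consistent with $\Re\sqrt{z_j}\geq 0$ and $\Im\sqrt{z_j}\leq 0$. Once the uniform sign of $\Re\sqrt{z_j}$ and of $\Im\sqrt{z_j}$ across all $j$ is secured, positivity of the cross terms, and thus both assertions, follows automatically: no cancellation between the branches can occur precisely because the common half-plane position of the $z_j$ aligns the signs.
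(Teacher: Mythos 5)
Your proof is correct. For part (i) it rests on the same pillar as the paper's proof, namely expanding $|w(\lambda-i\eps)|^2$ into a diagonal part equal to $\sum_j c_j|\lambda-i\eps-a_j|\ge\sum_j c_j|\lambda-a_j|$ plus cross terms that are shown to be non-negative via the branch-cut convention; but where the paper runs a three-case analysis on the position of $\lambda$ relative to $a_k$ and $a_l$ to conclude $\arg(\eta_k\overline{\eta_l})\in[-\pi/4,\pi/2]$, you observe once and for all that every $\sqrt{z_j}$ lies in the closed fourth quadrant, so each cross term $\Re\sqrt{z_k}\,\Re\sqrt{z_l}+\Im\sqrt{z_k}\,\Im\sqrt{z_l}$ is a sum of two products of like-signed factors --- a genuine streamlining that removes the case distinction entirely, and your explicit check of the boundary case $z_j<0$, $\eps=0$ is exactly the point where care is needed. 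For part (ii) your route differs more substantially: the paper extends the inequality of part (i) to the upper half-plane via the symmetry $w(\overline{\mu})=\overline{w(\mu)}$ and then reads off the conclusion from $|w(\mu)|^2\ge\sum_j c_j|\Re(\mu)-a_j|$, whereas you dispose of non-real $\lambda$ directly by noting that all $\sqrt{\lambda-a_j}$ then have imaginary parts of one common strict sign, so the sum cannot vanish, and reserve the inequality only for real $\lambda$. Both arguments are sound; yours is marginally more self-contained (no appeal to the conjugation identity), while the paper's version has the side benefit of recording the quantitative lower bound $|w(\mu)|^2\ge\sum_j c_j|\Re(\mu)-a_j|$ on the whole half-plane, which is reused later (e.g.\ in the proof of Lemma~\ref{lemPlancherel}).
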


\begin{proof}
We first prove \ref{enum:w.esti:1}).
Note that for $z_1,z_2,\ldots,z_n \in \C$ holds
\[ \Bigl| \sum_{j=1}^n z_j \Bigr|^2 = \sum_{j=1}^n |z_j|^2 + 2
    \sum_{{k,l=1}\atop{k\neq l}}^{n} \Re (z_k \overline{z_l}).
\]
With $z_k := c_k \xi_k(\lambda - i \eps)$ and the
abbreviation $\eta_k := \sqrt{\lambda - i \eps -a_k}$ it follows
\[ | w(\lambda - i \varepsilon) |^2 = \sum_{j=1}^n c_j |\eta_j|^2 + 2
    \sum_{{k,l=1}\atop{k\neq l}}^n c_k c_l \Re (\eta_k \overline{\eta_l}).
\]
Thus it suffices to show $\Re (\eta_k \overline{\eta_l}) \ge 0$ for
$k,l = 1, \ldots, n$ with $k \neq l$. With our convention $\sqrt{z}
= \sqrt{|z|} e^{i \frac{\arg(z)}{2}}, \ \arg(z) \in [-\pi,\pi)$,
this means
\[ \arg(\eta_k \overline{\eta_l}) \in \Bigl[ -\frac{\pi}{2}, \frac{\pi}{2}
    \Bigr].
\]
Without loss of generality, let $k<l$. Then we have $a_k \leq a_l$ and there are three possible
positions of $\lambda$:
\begin{itemize}
\item $a_l \leq \lambda$:

\noindent Then for $r = k$ and for $r = l$ we have $\arg(\lambda - i\eps -
a_r) \in [-\frac{\pi}{2},0]$ and therefore
\[ \arg(\eta_r) = \frac{1}{2} \arg(\lambda - i\eps - a_r) \in \Bigl[
    -\frac{\pi}{4}, 0 \Bigr].
\]
Using $\lambda - a_k > \lambda - a_l$, this implies
\[ \arg(\eta_k \overline{\eta_l}) = \arg(\eta_k) - \arg(\eta_l) \in \Bigl[ 0,
    \frac{\pi}{4} \Bigr].
\]
\item $a_k \leq \lambda < a_l$:

\noindent Then $\arg(\lambda - i\eps - a_{k}) \in [-\frac{\pi}{2},0]$ and
therefore
\[ \arg(\eta_k) = \frac{1}{2} \arg(\lambda - i\eps - a_k) \in \Bigl[
    -\frac{\pi}{4}, 0 \Bigr].
\]
Furthermore, $\arg(\lambda - i\eps - a_l) \in [-\pi, -\frac{\pi}{2}]$, and
therefore
\[ \arg(\eta_l) = \frac{1}{2} \arg(\lambda - i\eps - a_l) \in \Bigl[
    -\frac{\pi}{2}, -\frac{\pi}{4} \Bigr].
\]
Putting everything together we get
\[ \arg(\eta_k \overline{\eta_l}) = \arg(\eta_k) - \arg(\eta_l) \in \Bigl[ 0,
    \frac{\pi}{2} \Bigr].
\]
%
\item $\lambda < a_k$:

\noindent In this case we get, again for $r=k$ and $r=l$,
$\arg(\lambda - i\eps - a_r) \in [-\pi,-\frac{\pi}{2}]$ and therefore
\[ \arg(\eta_r) = \frac{1}{2} \arg(\lambda - i\eps - a_r) \in \Bigl[
    -\frac{\pi}{2}, -\frac{\pi}{4} \Bigr].
\]
This yields
\[ \arg(\eta_k \overline{\eta_l}) = \arg(\eta_k) - \arg(\eta_l) \in \Bigl[
    -\frac{\pi}{4},\frac{\pi}{4} \Bigr].
\]
\end{itemize}

Thus, in all three cases we have $\arg(\eta_k \overline{\eta_l}) \in
[ -\frac{\pi}{4}, \frac{\pi}{2} ]$ and, hence, $\Re (\eta_k \overline{\eta_l})
\geq 0$ for all $k,l = 1, \ldots ,n$ with $k \neq l$.

In order to prove \ref{enum:w.esti:2}), we note that the choice of the branch cut of the complex square root has been made in such a way that
$\sqrt{\overline{\mu}} = \overline{\sqrt{\mu}}$ for all $\mu \in \C$.
Thus $w(\overline{\mu}) = \overline{w(\mu)}$ for all $\mu \in \C$. This implies that the first part of the lemma can be generalized to:
\[ |w(\mu)|^2 \geq \sum_{j=1}^n c_j |\Re(\mu) - a_j|
\]
for every $\mu$ such that $\Re(\mu) \geq a_1$. Then $w(\mu)=0$ for
such a $\mu$ implies $|\Re(\mu) - a_j| = 0$ for all $j \in \{ 1,
\ldots, n \}$. Thus all  $a_k, k= 1, \ldots, n $ have to be equal to
some number $\alpha \ge 0$. Then $|w(\mu)|^2 = \left( \sum_{k=1}^n
\sqrt{c_k} \right)^2 |\mu - \alpha|$, which only vanishes for $\mu =
\alpha$.
\end{proof}
%
\begin{theo} \label{theo1}
Let $f \in H$. Then, for $x \in N$ and $\lambda \in \rho(A)$ such that
$\Re(\lambda) \geq a_1$
\[ [R(\lambda, A)f](x)= \int_N K(x, x', \lambda) f(x') \; dx'.
\]
\end{theo}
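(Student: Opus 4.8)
The plan is to deduce the formula branch by branch from Proposition~\ref{expr.resol}. Fix $j \in \{1,\dots,n\}$ and take $x \in N_j$; for $\lambda \in \rho(A)$ with $\Re(\lambda) \geq a_1$ one has $\Im(\lambda) \neq 0$ (the half-line $[a_1,+\infty)$ being filled by the continuous spectrum), so the sign convention of Definition~\ref{def.K} is unambiguous and the decay statement of Remark~\ref{rem.eig} is available. I would feed into Proposition~\ref{expr.resol}, with the index $k := j$, the two generalized eigenfunctions $e_1^\lambda := F_\lambda^{\pm,j}$ and $e_2^\lambda := F_\lambda^{\pm,j+1}$ (signs as in Definition~\ref{def.K}, with $j+1$ taken modulo $n$). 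By Remark~\ref{rem.eig} both are generalized eigenfunctions of $A$. Moreover $e_2^\lambda|_{N_j} = \exp(\pm i \xi_j(\lambda)\, \cdot\,) \in H^2(N_j)$ because $j+1 \neq j$, while $e_1^\lambda|_{N_m} = \exp(\pm i \xi_m(\lambda)\, \cdot\,) \in H^2(N_m)$ for every $m \neq j$; these are exactly the integrability requirements of Proposition~\ref{expr.resol}.

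The next step is to evaluate the Wronskian $w_{1,2}^\lambda$ on $N_j$ and to identify the prefactor. Since the equation has no first order term, $w_{1,2}^\lambda$ is constant on $N_j$, so I would compute it at $x=0$ using $F_{\lambda,j}^{\pm,j}(0)=1$, $(F_{\lambda,j}^{\pm,j})'(0)=\pm i s_j \xi_j$, $F_{\lambda,j}^{\pm,j+1}(0)=1$ and $(F_{\lambda,j}^{\pm,j+1})'(0)=\pm i \xi_j$, which yields $w_{1,2}^\lambda = \pm i \xi_j(\lambda)\bigl(1 - s_j(\lambda)\bigr)$. Inserting $s_j = -\bigl(\sum_{l\neq j} c_l \xi_l\bigr)/(c_j\xi_j)$ gives $1 - s_j = \bigl(\sum_{l=1}^n c_l\xi_l\bigr)/(c_j\xi_j)$, hence $c_j w_{1,2}^\lambda = \pm i \sum_{l=1}^n c_l\xi_l(\lambda) = w(\lambda)$. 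By Lemma~\ref{w.esti}\,\ref{enum:w.esti:2}) this is nonzero on $\rho(A)\cap\{\Re(\lambda)\geq a_1\}$ (equality of all $a_k$ would force $\lambda \in \sigma(A)$). Substituting $c_j w_{1,2}^\lambda(x) = w(\lambda)$ into~(\ref{E}) turns the two cases of that formula into exactly the two cases of Definition~\ref{def.K} for $x \in N_j$, giving $[R(\lambda,A)f](x) = \int_N K(x,x',\lambda) f(x')\,dx'$ there; letting $j$ vary exhausts $N$.

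The main obstacle is that this pair does \emph{not} meet the global nonvanishing-Wronskian hypothesis of Proposition~\ref{expr.resol}: on each branch $N_m$ with $m \neq j,j+1$ both $e_1^\lambda$ and $e_2^\lambda$ restrict to the same exponential $\exp(\pm i \xi_m \cdot)$, so $w_{1,2}^\lambda \equiv 0$ there, and for $n \geq 3$ no pair drawn from the $F_\lambda^{\pm,l}$ can be independent on all branches simultaneously. This is precisely the cyclic coupling of the $n$-star announced in the introduction. I would resolve it by reinspecting the proof of Proposition~\ref{expr.resol}: the Wronskian enters only through the single surviving boundary term at the evaluation point, since Green's identity (Lemma~\ref{greens.formula}) absorbs the node terms and the $H^2$ decay kills all contributions at infinity, reducing $I_\lambda$ to $c_j w_{1,2}^\lambda(x)\,u(x)$ with $x \in N_j$. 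The values of $w_{1,2}^\lambda$ on the far branches never appear, so the computation still establishes $\int_N K(x,x',\lambda)(\lambda I - A)u(x')\,dx' = u(x)$ for $x \in N_j$ under the sole condition $w(\lambda)\neq 0$, and invertibility of $\lambda I - A$ then concludes. Equivalently, one may rerun that computation directly with the cyclic kernel $K$, which automatically selects the branch-adapted pair on each $N_j$ and thereby avoids invoking Proposition~\ref{expr.resol} verbatim.
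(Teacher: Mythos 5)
Your proposal follows essentially the same route as the paper: feed the branch-adapted pair $e_1^\lambda = F_\lambda^{\pm,j}$, $e_2^\lambda = F_\lambda^{\pm,j+1}$ into Proposition~\ref{expr.resol}, compute $c_j w_{1,2}^\lambda = \pm i \sum_l c_l \xi_l = w(\lambda)$, and invoke Lemma~\ref{w.esti} for the nonvanishing; your Wronskian computation is correct and matches Definition~\ref{def.K} in both cases. Where you go beyond the paper is in the last paragraph: the published proof silently ignores the fact that for $n\ge 3$ the pair $(F_\lambda^{\pm,j}, F_\lambda^{\pm,j+1})$ violates the hypothesis of Proposition~\ref{expr.resol} that $w_{1,2}^\lambda(x)\neq 0$ for \emph{every} $x\in N$, since both functions coincide with $\exp(\pm i\xi_m\,\cdot\,)$ on each $N_m$ with $m\neq j, j+1$. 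Your repair --- reinspecting the proof of Proposition~\ref{expr.resol} to see that the Wronskian enters only through the boundary term at the evaluation point $x\in N_j$, all other contributions being killed by the transmission conditions and the $H^2$ decay --- is exactly the right fix and makes the argument complete where the paper's is not. The only point stated without justification (in the paper as well as in your write-up) is that a real $\lambda\ge a_1$ in $\rho(A)$ cannot occur, equivalently that $\sigma(A)$ fills the whole half-line $[a_1,+\infty)$; this is true but is nowhere proved in the paper, and it is what guarantees both the sign convention and, in the case $a_1=\cdots=a_n=\alpha$, that the unique zero $\lambda=\alpha$ of $w$ lies in the spectrum.
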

%
\begin{proof}
In \eqref{E}, the generalized eigenfunction $e_1^\lambda$ can be chosen to be $F^{\pm,j}_{\lambda}$. Then $e_2^\lambda$ can be
$F^{\pm,l}_{\lambda}$ with any $l \neq j$ so we have chosen $j+1$ to fix the formula. The choice has been done so that the integrands lie
in $L^1(0, + \infty)$ (cf. the last item in Remark~\ref{rem.eig}). Simple calculations yield the expression for the Wronskian
$w(\lambda)$.

For $\lambda \in \rho(A)$ such that $\Re(\lambda) \geq a_1$, the Wronskian only vanishes at $\lambda = \alpha$ if $a_k = \alpha$ for all $k \in \{ 1, \ldots, n \}$
due to Lemma~\ref{w.esti}. But in this case, the Wronskian is $w(\lambda) = \left( \sum_{k=1}^n \sqrt{c_k} \right) \sqrt{\lambda - \alpha}$ and
$w(\lambda)^{-1}$ has an $L^1$-singularity at $\lambda = \alpha$.
\end{proof}
%
%
%
%
%
\section{Application of Stone's formula and limiting absorption
principle} \label{app.sto}
%
%
%
%
\noindent
Let us first recall Stone's formula (see Theorem~XII.2.11 in \cite{Dun}).
%
\begin{theo} \label{stone}
Let $E$ be the resolution of the identity of a linear unbounded
self-adjoint operator $T: D(T) \rightarrow H$ in a Hilbert space $H$
(i.e. $E(a,b) = {{\mathbf{1}}}_{(a,b)}(T)$ for $(a,b) \in \R^2$,
$a<b$). Then, in the strong operator topology
\[ h(T) E(a,b) = \lim_{\delta \rightarrow 0^+} \lim_{\eps \rightarrow
   0^+} \frac{1}{2 \pi i} \int_{a + \delta}^{b - \delta} h(\lambda)
   [R(\lambda - \eps i, T) - R(\lambda + \eps i, T)] \; d \lambda
\]
for all $(a,b) \in \R^2$ with $a<b$ and for any continuous scalar function $h$
defined on the real line.
\end{theo}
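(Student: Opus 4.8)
The plan is to push everything through the spectral theorem and then to recognise the difference of resolvents as a Poisson kernel acting on $h$. Writing $T=\int_{\R}\mu\,dE(\mu)$, the bounded Borel functional calculus gives $R(z,T)=\int_{\R}(z-\mu)^{-1}\,dE(\mu)$ for every $z\notin\R$. A short computation of the integrand then yields
\[ \frac{1}{2\pi\ii}\bigl[R(\lambda-\ii\eps,T)-R(\lambda+\ii\eps,T)\bigr]=\int_{\R}\frac{1}{\pi}\,\frac{\eps}{(\lambda-\mu)^2+\eps^2}\,dE(\mu), \]
so that, spectrally, the object under the $\lambda$-integral is the Poisson kernel $P_\eps(\lambda-\mu)$ integrated against $dE$.

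Next I would fix $x,y\in H$ and pass to the finite complex measure $\mu\mapsto(E(\mu)x,y)$. On the compact range $\lambda\in[a+\delta,b-\delta]$ the function $(\lambda,\mu)\mapsto h(\lambda)P_\eps(\lambda-\mu)$ is bounded and continuous, so Fubini's theorem permits interchanging $\int_{a+\delta}^{b-\delta}d\lambda$ with $\int_{\R}dE$. This turns the matrix element of the approximating operator into $\int_{\R}\phi_{\delta,\eps}(\mu)\,d(E(\mu)x,y)$, where
\[ \phi_{\delta,\eps}(\mu)=\int_{a+\delta}^{b-\delta}h(\lambda)\,\frac{1}{\pi}\,\frac{\eps}{(\lambda-\mu)^2+\eps^2}\,d\lambda \]
is precisely the Poisson integral of $h\,\mathbf 1_{(a+\delta,b-\delta)}$. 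Since $\tfrac{1}{\pi}\int_{\R}\frac{\eps}{(\lambda-\mu)^2+\eps^2}\,d\lambda=1$, this gives the uniform-in-$\eps$ bound $|\phi_{\delta,\eps}|\le\max_{[a+\delta,b-\delta]}|h|=:M_\delta$.

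I would then take the two limits in the prescribed order. For fixed $\delta$, the approximate-identity property of $P_\eps$ gives $\phi_{\delta,\eps}(\mu)\to\phi_\delta(\mu)$ pointwise as $\eps\to0^+$, where $\phi_\delta=h$ on $(a+\delta,b-\delta)$, equals $\tfrac12 h$ at the two endpoints, and vanishes off $[a+\delta,b-\delta]$; the uniform bound $M_\delta$ and dominated convergence against the finite measure $d(Ex,x)$ upgrade this to strong convergence via $\|\int g\,dE\,x\|^2=\int|g|^2\,d(Ex,x)$. Letting $\delta\to0^+$, one has $\phi_\delta\to h\,\mathbf 1_{(a,b)}$ pointwise off the two-point set $\{a,b\}$, again boundedly, so dominated convergence yields $\int_{\R}h\,\mathbf 1_{(a,b)}\,dE\,x=h(T)E(a,b)x$ by multiplicativity of the functional calculus. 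The main obstacle is exactly the boundary behaviour that forces the nested limits in this order: at an endpoint the Poisson integral converges to the \emph{average} $\tfrac12 h$, so one cannot set $\delta=0$ before $\eps\to0$ and still recover a clean characteristic function. Keeping $\delta>0$ during the $\eps$-limit separates $\{a,b\}$ from the support of the limiting profile, and only the subsequent $\delta\to0$ discards the endpoint masses $E(\{a\})$, $E(\{b\})$ in accordance with the open interval $(a,b)$.
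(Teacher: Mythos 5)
Your argument is correct, but note that the paper does not prove this statement at all: Theorem \ref{stone} is quoted verbatim from Dunford--Schwartz (Theorem XII.2.11 of \cite{Dun}) and used as a black box, so there is no in-paper proof to compare against. What you give is the standard spectral-theorem proof: writing $R(z,T)=\int_{\R}(z-\mu)^{-1}\,dE(\mu)$, the difference of resolvents collapses to the Poisson kernel $P_\eps(\lambda-\mu)=\frac{1}{\pi}\frac{\eps}{(\lambda-\mu)^2+\eps^2}$, Fubini (legitimate, since the integrand is bounded on the compact $\lambda$-range and $d(E(\mu)x,y)$ has finite total variation) converts the approximant into $\int_{\R}\phi_{\delta,\eps}\,dE$, and the two nested limits are handled by dominated convergence against the finite measure $d(Ex,x)$, with the identity $\|\int g\,dE\,x\|^2=\int|g|^2\,d(Ex,x)$ upgrading weak to strong convergence. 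Your treatment of the delicate points is right: the uniform bound $|\phi_{\delta,\eps}|\le\max_{[a+\delta,b-\delta]}|h|$ comes from the total mass of the Poisson kernel, the $\eps$-limit produces the averaged value $\tfrac12 h$ at the endpoints $a+\delta$, $b-\delta$ (which is exactly why the $\delta$-regularization is needed before $\eps\to0$), and the subsequent $\delta$-limit recovers $h\,\mathbf 1_{(a,b)}$ pointwise (in fact everywhere, since $\phi_\delta$ vanishes at $a$ and $b$ for every $\delta>0$, matching the open-interval convention $E(a,b)=\mathbf 1_{(a,b)}(T)$), so that multiplicativity of the functional calculus yields $h(T)E(a,b)$. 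The only cosmetic remark is that the bound used in the final dominated-convergence step should be taken uniform in $\delta$, i.e.\ $\max_{[a,b]}|h|$ rather than $M_\delta$, which is immediate since $h$ is continuous on the compact $[a,b]$.
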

%
To apply this formula we need to study the behaviour of the resolvent
$R(\lambda, A)$ for $\lambda$ approaching the spectrum of $A$.
Lemma~\ref{w.esti} will be useful as well as the following results.
First we state an estimate that follows directly from the definition
of $s_j$.
\begin{lem} \label{sj}
Let $\delta > 0$ be fixed. For all $a_1 \le \lambda$, $0 < \eps < \delta$ and
$j = 1,\ldots,n$ holds
\[ |s_j(\lambda - i \eps)| \le M(\lambda, \delta) := \max_{j=1,\ldots,n}
    \Bigl\{ \frac{1}{\sqrt{|\lambda - a_j|}}  \Bigr\} \sum_{k=1}^n \bigl(
    (\lambda - a_k)^2 + \delta^2\bigr)^{1/4}.
\]
\end{lem}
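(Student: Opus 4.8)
\emph{Plan.} The plan is to prove the inequality by a direct computation starting from the definition of $s_j$, the only genuine ingredient being the behaviour of the complex square root under taking moduli. Writing $\mu := \lambda - i\eps$ for brevity, I would first insert the definition
\[ s_j(\mu) = -\frac{\sum_{l \neq j} c_l\,\xi_l(\mu)}{c_j\,\xi_j(\mu)} \]
and apply the triangle inequality to the numerator. This reduces the whole estimate to controlling the single scalar quantity $c_k\,|\xi_k(\mu)|$ from above when $k \neq j$ and from below when $k = j$.

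The one elementary fact that drives everything is that the modulus of a complex square root equals the square root of the modulus, independently of the chosen branch cut, since $|\sqrt{z}|^2 = |(\sqrt{z})^2| = |z|$. Applied here with $\xi_k(\mu) = \sqrt{(\mu - a_k)/c_k}$ and $c_k > 0$, this gives
\[ c_k\,|\xi_k(\mu)| = \sqrt{c_k}\,|\mu - a_k|^{1/2} = \sqrt{c_k}\,\bigl((\lambda - a_k)^2 + \eps^2\bigr)^{1/4}. \]
For the terms $l \neq j$ in the numerator I would then use the hypothesis $0 < \eps < \delta$ together with the fact that $\eps \mapsto \bigl((\lambda - a_l)^2 + \eps^2\bigr)^{1/4}$ is increasing, to pass to $\bigl((\lambda - a_l)^2 + \delta^2\bigr)^{1/4}$, whereas for the denominator I would simply discard the nonnegative term $\eps^2$ to obtain the lower bound $c_j\,|\xi_j(\mu)| \ge \sqrt{c_j}\,\sqrt{|\lambda - a_j|}$.

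Combining these three estimates yields
\[ |s_j(\mu)| \le \frac{1}{\sqrt{|\lambda - a_j|}} \sum_{l \neq j} \sqrt{\tfrac{c_l}{c_j}}\,\bigl((\lambda - a_l)^2 + \delta^2\bigr)^{1/4}, \]
and the stated bound $M(\lambda, \delta)$ follows after extending the sum over $l \neq j$ to the full range $k = 1, \ldots, n$ (only nonnegative terms are added), absorbing the $\eps$-independent weights $c_k$, and replacing $1/\sqrt{|\lambda - a_j|}$ by its maximum over $j$. I do not expect any real obstacle: the argument is essentially a one-liner, as the authors indicate. The only points requiring attention are the bookkeeping of the constants $c_k$ in numerator and denominator and the lower bound on the denominator, which degenerates precisely when $\lambda = a_j$; in that case, however, $M(\lambda, \delta) = +\infty$ and the inequality holds trivially, so nothing is lost.
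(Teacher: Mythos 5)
The paper offers no proof of this lemma (it merely asserts that the estimate ``follows directly from the definition''), and your computation is exactly the natural one: $c_k\xi_k(\mu)=\sqrt{c_k}\,\sqrt{\mu-a_k}$, hence $|c_k\xi_k(\mu)|=\sqrt{c_k}\,\bigl((\lambda-a_k)^2+\eps^2\bigr)^{1/4}$, the triangle inequality in the numerator, monotonicity in $\eps$ there, and discarding $\eps^2$ in the denominator. Up to the displayed intermediate bound
\[ |s_j(\mu)| \le \frac{1}{\sqrt{|\lambda - a_j|}} \sum_{l \neq j} \sqrt{\tfrac{c_l}{c_j}}\,\bigl((\lambda - a_l)^2 + \delta^2\bigr)^{1/4}
\]
everything you write is correct.

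The gap is the final step, ``absorbing the $\eps$-independent weights $c_k$.'' The stated majorant $M(\lambda,\delta)$ contains no $c$-dependence at all, so passing from your bound to $M$ requires $\sqrt{c_l/c_j}\le 1$ for all $l\neq j$ and all $j$, which forces all the $c_k$ to be equal; for general $c_k>0$ the ratio $\sqrt{c_l/c_j}$ is unbounded and cannot be absorbed. In fact the lemma is false as literally stated: take $n=2$, $a_1=a_2=0$, $c_1=1$, $c_2=10^6$, $\lambda=1$, $\delta=1$; then $|s_1(\lambda-i\eps)|\to 10^3$ as $\eps\to 0$ while $M(1,1)=2\cdot 2^{1/4}$. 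What your computation actually proves is the corrected bound
\[ |s_j(\lambda-i\eps)| \le \max_{j}\Bigl\{\frac{1}{\sqrt{c_j}\,\sqrt{|\lambda-a_j|}}\Bigr\}\sum_{k=1}^n \sqrt{c_k}\,\bigl((\lambda-a_k)^2+\delta^2\bigr)^{1/4},
\]
which is consistent with the paper's own remark that in the case $a_1=\cdots=a_n$ one must take $M=\max_j\{c_j^{-1/2}\sum_{k\neq j}\sqrt{c_k}\}$ (note the $c$'s survive there). Since this corrected $M$ retains the only property used downstream, namely $M(\cdot,\delta)\in L^1_{\mathrm{loc}}([a_1,+\infty))$, nothing later in the paper is affected; but as written, your last sentence papers over the point where the argument (and the lemma's formula) actually breaks. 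You should state the corrected constant rather than claim the weights can be absorbed.
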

Note that $M(\cdot,\delta) \in
L^1_{\mathrm{loc}}([a_1,+\infty))$.
Furthermore, if $a_1 = \ldots = a_n$, then
\[ s_j(\mu) = \frac{1}{\sqrt{c_j}} \sum_{{k=1}\atop{k\neq j}}^n
    \sqrt{c_k}
\]
for all $\mu \in \C$, which means that $s_j$ is constant and we may take
\[ M(\lambda,\delta) := \max_{j=1,\ldots,n}   \Bigl\{ \frac{1}{\sqrt{c_j}}
    \sum_{{k=1}\atop{k\neq j}}^n \sqrt{c_k} \Bigr\}.
\]

\begin{theo}[Limiting absorption principle for $A$] \label{lim.abs}
Let $\delta > 0$ be fixed and let $M(\lambda,\delta)$ be defined as in
Lemma~\ref{sj}. Then for all $a_1 \leq \lambda$, $0 < \eps < \delta$ and
$(x,x') \in N^2$ we have
\begin{enumerate}
\item $\lim_{\alpha \rightarrow 0} K(x,x',\lambda - i \alpha) =
      K(x,x',\lambda)$,
\item $|K(x,x',\lambda - i \eps)| \leq N(\lambda,\delta) e^{\gamma  (x + x')}$,
    where $N(\lambda,\delta) :=
    \frac{1 + M(\lambda,\delta)}{(\sum_{j=1}^n c_j |\lambda - a_j|)^{1/2}}$
    and \\
$\gamma := \max_{j=1,\ldots,n} \{ c_j^{-\frac{1}{2}} \}
      \max \{
      ((a_n-a_1)^2  + \delta^2)^{\frac{1}{4}},
      1,
       \delta
      \}.$
\end{enumerate}
\end{theo}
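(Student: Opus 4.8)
The plan is to treat the two assertions separately: the first by a continuity argument, the second by reducing the kernel to a single ``diagonal'' eigenfunction factor and estimating each ingredient.

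For assertion (1), I would observe that every building block of $K(x,x',\lambda - i\alpha)$ is, for fixed $x,x'$, a composition of the maps $\alpha\mapsto\xi_k(\lambda - i\alpha)$ with the entire functions $\cos,\sin,\exp$, together with the factor $1/w$. The crucial point is that the branch cut has been fixed with $\phi\in[-\pi,\pi)$, so that $\sqrt{\cdot}$ is continuous up to and including the negative real axis \emph{when this axis is approached from the lower half-plane}; since $\lambda - i\alpha$ has nonpositive imaginary part for $\alpha\ge 0$, each $\xi_k(\lambda - i\alpha)$ depends continuously on $\alpha$ down to $\alpha = 0$. The sign convention is likewise consistent: for $\alpha\ge 0$ one has $\Im(\lambda - i\alpha)\le 0$, so the $-$ sign is used throughout and no jump occurs at $\alpha = 0$. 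The only apparent singularity, at a zero $\xi_j=0$ of a diagonal eigenfunction, is removable, because $s_j\sin(\xi_j x) = -(c_j)^{-1}\bigl(\sum_{l\neq j}c_l\xi_l\bigr)\,\xi_j^{-1}\sin(\xi_j x)$ and $\xi_j^{-1}\sin(\xi_j x)\to x$ as $\xi_j\to 0$. Finally $w(\lambda - i\alpha)\to w(\lambda)$, which by Lemma~\ref{w.esti} is nonzero except at the single point where all $a_k$ coincide with $\lambda$; away from it the quotient converges, giving (1).

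For assertion (2), I would first decompose $K$ according to the cases in Definition~\ref{def.K}. In each case exactly one factor is a diagonal eigenfunction $F^{\pm,j}_{\lambda,j}$ (evaluated at $x$ or at $x'$), while the remaining factor is an off-diagonal exponential $\exp(\pm i\xi_k\,\cdot\,)$. Working with the $-$ sign (valid since $\Im(\lambda - i\eps)<0$), I note that $\Im\xi_k(\lambda - i\eps)\le 0$ for every $k$: indeed $\lambda - i\eps - a_k$ lies in the open lower half-plane, so its square root has argument in $(-\pi/2,0)$. Hence $|\exp(-i\xi_k t)| = e^{t\,\Im\xi_k}\le 1$ for $t\ge 0$, so every off-diagonal factor is bounded by $1$. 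For the diagonal factor I use $|\cos w|,|\sin w|\le e^{|\Im w|}$ to obtain $|F^{-,j}_{\lambda,j}(t)|\le(1+|s_j|)\,e^{|\Im\xi_j|\,t}$. Combining this with $|s_j|\le M(\lambda,\delta)$ from Lemma~\ref{sj} and $1/|w|\le(\sum_j c_j|\lambda - a_j|)^{-1/2}$ from Lemma~\ref{w.esti}\ref{enum:w.esti:1}), I get $|K(x,x',\lambda - i\eps)|\le N(\lambda,\delta)\,e^{|\Im\xi_j|(x+x')}$, so that it remains only to prove the uniform bound $|\Im\xi_j(\lambda - i\eps)|\le\gamma$.

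This last estimate is the main obstacle, and the place where the precise form of $\gamma$ originates. Writing $z = (\lambda - a_j) - i\eps$ and $t = \lambda - a_j$, a direct computation gives $|\Im\sqrt z|^2 = \frac{1}{2}\bigl(\sqrt{t^2+\eps^2} - t\bigr)$. I would then distinguish two regimes. In the propagative regime $t\ge 0$ (i.e.~$\lambda\ge a_j$) one has $\sqrt{t^2+\eps^2} - t = \eps^2\bigl(\sqrt{t^2+\eps^2}+t\bigr)^{-1}\le\eps<\delta$, whence $|\Im\sqrt z|\le\sqrt{\delta/2}\le\max\{1,\delta\}$; here the imaginary part stays small even as $\lambda\to+\infty$, which is exactly what saves the bound in the unbounded direction. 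In the evanescent (tunnel) regime $t<0$ (i.e.~$a_1\le\lambda<a_j\le a_n$) one has $|t| = a_j-\lambda\le a_n-a_1$, so the cruder bound $|\Im\sqrt z|\le|\sqrt z| = (t^2+\eps^2)^{1/4}\le\bigl((a_n-a_1)^2+\delta^2\bigr)^{1/4}$ suffices. In both cases $|\Im\sqrt z|\le\max\{((a_n-a_1)^2+\delta^2)^{1/4},1,\delta\}$, and since $|\Im\xi_j| = c_j^{-1/2}|\Im\sqrt z|\le\max_{j}c_j^{-1/2}\cdot\max\{((a_n-a_1)^2+\delta^2)^{1/4},1,\delta\} = \gamma$, the desired bound follows and the proof of (2) is complete.
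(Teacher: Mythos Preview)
Your proof is correct and follows essentially the same route as the paper: bound each factor of $K$ separately, invoke Lemma~\ref{w.esti} for $1/|w|$ and Lemma~\ref{sj} for $|s_j|$, and control the exponential growth via a bound on $|\Im\xi_j(\lambda-i\eps)|$. Your organization differs only in that you split per index $j$ (propagative $\lambda\ge a_j$ versus evanescent $\lambda<a_j$) and exploit $\Im\xi_k\le 0$ to bound the off-diagonal exponentials by $1$, whereas the paper splits globally ($\lambda>a_n$ versus $a_1\le\lambda\le a_n$) and uses the cruder $|e^z|\le e^{|z|}$ throughout; both lead to the same $\gamma$. One small inaccuracy worth fixing: when $x\in N_j$ and $x'\in N_k$ with $k\neq j$, \emph{both} factors of $K$ are off-diagonal exponentials, not ``exactly one'' --- but this only makes that case easier, since then $|K|\le|w|^{-1}\le N(\lambda,\delta)$ directly.
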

%
\begin{proof}
\begin{enumerate}
\item The complex square root is, by definition, continuous on $\{ z \in \C
      : \Im (z) \leq 0 \}$ (cf. Definition~\ref{gen.eig}), hence the
      continuity of $K(x, x', \cdot)$ from below on the real axis. Note that
      $x$ and $x'$ are fixed parameters in this context.
\item For $  \mu = \lambda - i \eps$ and $x \in
      \overline{N_j}$ we have in concrete terms
     \[ K(x,x',\mu) = \frac{1}{w(\mu)}
         \left\{ \begin{aligned}
           &\bigl[\cos \bigl( \xi_j( \mu) x \bigr) - i
             s_j( \mu) \sin \bigl( \xi_j( \mu) x \bigr) \bigr]
             \exp(- i \xi_j(\mu)x'),
           && x' \in  \overline{N_j}, \ x'> x,\\
           & \exp \bigl( - i \xi_j(\mu)x \bigr)
             \bigl[ \cos \bigl( \xi_j( \mu) x' \bigr) - i
             s_j( \mu) \sin \bigl( \xi_j( \mu) x' \bigr) \bigr],
             && x' \in \overline{N_j}, \ x'< x,
             \\
           &\exp \bigl( - i \xi_j(\mu)x \bigr) \exp \bigl(- i \xi_k(\mu)x'
        \bigr),
           && x' \in \overline{N_k}, \ k \neq j.
         \end{aligned} \right.
      \]
    Now, let us first look at the case $\lambda > a_n$. Then
    \[ |\exp(-i \xi_j(\mu) x)| \le \exp \bigl( |\Im(\xi_j(\mu) x)| \bigr)
        = \exp \bigl( c_j^{-1/2} |\Im(\sqrt{\lambda - i \eps - a_j})|
        \bigr).
    \]
    Using the fact, that for $z \in \C$ with $|\arg(z)| \le \pi/2$ we have
    $|\Im(\sqrt{z})| \le \max \{1, |\Im(z)|\}$, we obtain
    \[ |\exp(-i \xi_j(\mu) x)| \le
        \exp \bigl( c_j^{-1/2} \max\{1,\delta\} x \bigr).
    \]
    In the case $a_1 \le \lambda \le a_n$ we find
    \[  |\xi_j(\mu)| = \sqrt{\frac{|\mu - a_j|}{c_j}} = c_j^{-1/2} \bigl(
        (\lambda - a_j)^2 + \eps^2 \bigr)^{1/4} \le c_j^{-1/2} \bigl(
        (a_n - a_j)^2 + \delta^2 \bigr)^{1/4}.
    \]
    Using these estimates and $|e^z| = e^{\Re(z)} \le e^{|z|}$, we find
    for all $\lambda \ge a_1$
        \begin{align*}
      |K(x,x',\mu)| &\le \frac{1}{|w(\mu)|}
            \left\{ \begin{aligned}
            & \bigl(1 + |s_j(\mu)| \bigr) \exp(|\xi_j(\mu)| x)
            \exp(|\xi_j(\mu)| x'), && x' \in  \overline{N_j}, \\
            &\exp(|\xi_j(\mu)| x) \exp(|\xi_k(\mu)| x'), && x' \in
            \overline{N_k}, k \neq j
            \end{aligned} \right. \\
      &\leq \frac{1}{|w(\mu)|} \bigl(1 + |s_j(\mu)| \bigr)
        \exp(\gamma (x+x')).
    \end{align*}
    The conclusion now follows using Lemma~\ref{w.esti} and Lemma~\ref{sj}.
    \qedhere
\end{enumerate}
\end{proof}
\noindent
Note that these estimates in particular imply that $N(\cdot,\delta) \in
L^1_{\mathrm{loc}}([a_1,+ \infty))$. In fact, if $a_1=\ldots=a_n$, then $M(\lambda,\delta)$ can be chosen to be constant, see Lemma~\ref{sj}, and the
denominator of $N$ causes only an $L^1_{\mathrm{loc}}$-type singularity. On the
other hand, if there are two different $a_j$, the denominator of $N$ is never
zero and $M(\cdot,\delta) \in L^1_{\mathrm{loc}}([a_1,+ \infty))$ again by
Lemma~\ref{sj}.
%
%
\begin{lem} \label{conjug}
For $(x,x') \in N^2$ and $\lambda \in \C$, it holds $K(x,x',
\overline{\lambda}) = \overline{K(x,x', \lambda)}$.
\end{lem}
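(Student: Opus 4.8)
The plan is to reduce the claimed symmetry to the single elementary fact, already exploited in the proof of Lemma~\ref{w.esti}, that the chosen branch of the complex square root satisfies $\sqrt{\overline z} = \overline{\sqrt z}$ for every $z$ off the negative real axis, i.e.\ for $\arg(z) \in (-\pi,\pi)$. First I would treat the generic case $\Im(\lambda) \neq 0$. Here, for each $k$ one has $\Im(\lambda - a_k) = \Im(\lambda) \neq 0$, so $\lambda - a_k$ never lies on the branch cut and the identity above applies to $\xi_k(\lambda) = \sqrt{(\lambda-a_k)/c_k}$, giving $\xi_k(\overline\lambda) = \overline{\xi_k(\lambda)}$ because $a_k, c_k \in \R$. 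Since $s_j$ and $w$ are built from the $\xi_k$ with real coefficients $c_k$ (see Definitions~\ref{gen.eig} and~\ref{def.K}), it follows at once that $s_j(\overline\lambda) = \overline{s_j(\lambda)}$ and $w(\overline\lambda) = \overline{w(\lambda)}$, whence $1/w(\overline\lambda) = \overline{1/w(\lambda)}$.

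Next I would propagate this to the generalized eigenfunctions. Using $\overline{\cos z} = \cos\overline z$, $\overline{\sin z} = \sin\overline z$, $\overline{\exp z} = \exp\overline z$ together with the fact that the spatial variable is real, the formulas of Definition~\ref{gen.eig} yield, on every branch $N_m$, the single relation $\overline{F^{\pm,m}_{\lambda}(x)} = F^{\mp,m}_{\overline\lambda}(x)$. The crucial point making the signs match is the convention in Definition~\ref{def.K}: the sign is $+$ when $\Im(\lambda)>0$ and $-$ when $\Im(\lambda)\le 0$, so passing from $\lambda$ to $\overline\lambda$ with $\Im(\lambda)\neq0$ flips the sign from $\pm$ to $\mp$, exactly compensating the sign flip produced by conjugation. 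Assembling these ingredients case by case in Definition~\ref{def.K} --- the three regions $\{x' \in \overline{N_j},\, x'>x\}$, $\{x'\in\overline{N_j},\, x'<x\}$ and $\{x'\in\overline{N_k},\, k\neq j\}$ are each invariant under conjugating $\lambda$, since $x, x'$ and their ordering are untouched --- and applying $\overline{F^{\pm,m}_{\lambda}(x)} = F^{\mp,m}_{\overline\lambda}(x)$ to both factors, one obtains in each region $K(x,x',\overline\lambda) = \overline{1/w(\lambda)}\,\overline{F^{\pm,\cdot}_{\lambda,\cdot}(x)}\,\overline{F^{\pm,\cdot}_{\lambda,\cdot}(x')} = \overline{K(x,x',\lambda)}$.

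The step I expect to be delicate is precisely the real axis, where the square-root conjugation identity breaks down because $\lambda - a_k$ then sits on the branch cut $(-\infty,0]$ and $\overline\lambda = \lambda$ no longer flips the sign. The relevant real $\lambda$ for which $K$ is genuinely the resolvent kernel are those in $\rho(A)$, i.e.\ $\lambda < a_1$ by Proposition~\ref{selfadjoint}; there I would argue directly rather than through the conjugation identity. For such $\lambda$ each $(\lambda-a_k)/c_k$ is a negative real, so $\xi_k(\lambda) = -\,i\sqrt{(a_k-\lambda)/c_k}$ is purely imaginary, which makes $w(\lambda) = -\sum_{k} \sqrt{c_k(a_k-\lambda)}$ real, turns the exponentials $\exp(-i\xi_k(\lambda)x)$ into real decaying functions, and turns $\cos(\xi_j(\lambda)x)$ and $i s_j(\lambda)\sin(\xi_j(\lambda)x)$ into real hyperbolic expressions. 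Hence $K(x,x',\lambda)$ is real and the asserted identity $K(x,x',\overline\lambda) = K(x,x',\lambda) = \overline{K(x,x',\lambda)}$ holds trivially. Combining the two cases completes the proof; the only genuine work is the bookkeeping of signs in the non-real case and the explicit reality check on $(-\infty,a_1)$.
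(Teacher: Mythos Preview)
Your argument is essentially the paper's: reduce everything to $\sqrt{\overline z}=\overline{\sqrt z}$, push this through $\xi_k$, $s_j$, $w$ and the $F^{\pm,j}_\lambda$, and use that conjugation flips the $\pm$-sign in Definition~\ref{def.K}. The only difference is that you are more scrupulous about the branch cut, treating real $\lambda$ separately, whereas the paper simply asserts the square-root identity for all $\lambda\in\C$; note also that the paper writes $\overline{w(\lambda)}=-w(\overline\lambda)$ where your computation (consistent with the sign convention in Definition~\ref{def.K}) gives $\overline{w(\lambda)}=w(\overline\lambda)$, but this does not affect the conclusion.
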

%
\begin{proof}
The choice of the branch cut of the complex square root has been made such that
$\sqrt{\overline{\lambda}} = \overline{\sqrt{\lambda}}$ for all $\lambda \in \C$. This implies $\overline{e^{i \sqrt{\lambda}x}} = e^{\overline{i \sqrt{\lambda}x}} = e^{-i \sqrt{\overline{\lambda}}x}$
for all $\lambda \in \C$ and $x \in \R$. Thus it holds
\[ \overline{F_{\lambda}^{+,j}(x)} = F_{\overline{\lambda}}^{-,j}(x)
   \quad \text{and} \quad \overline{F_{\lambda}^{-,j}(x)} =
   F_{\overline{\lambda}}^{+,j}(x)
\]
for all $\lambda \in \C$, $x \in N$ and $j \in \{ 1, \dots, n \}$. In the same
way we have $\overline{w(\lambda)} = - w(\overline{\lambda})$. Observe,
that switching from $\lambda$ to $\overline{\lambda}$ the sign of the imaginary
part is changing, so in the definition of $K(x,x',\lambda)$ we have to take the
other sign, whenever there is a $\pm$-sign in the formula. This gives the assertion.
\end{proof}
Now, we can deduce a first formula for the resolution of the identity of
$A$.
%
\begin{prop} \label{res.id}
Take $f \in H = \prod_{j=1}^n L^2(N_j)$, vanishing almost everywhere outside a
compact set $B \subset N$ and let $- \infty < a < b < + \infty$. Then for any continuous
scalar function $h$ defined on the real line and for all $x \in N$
\begin{align*}
& \bigl( h(A) E(a,b)f \bigr)(x) \\
\strut = & \int_{(a,b) \cap [a_1, + \infty)} h(\lambda)
          \sum\limits_{j=1}^{n}
          {\mathbf{1}}_{\overline{N_j}}(x)
          \Bigl\{
          \int\limits_{N} f(x')
          \Bigl[
          {\mathbf{1}}_{(x, +\infty)_{N_j}}(x') \cdot
          \Im \Big(\frac{1}{ w(\lambda)}
          F_{\lambda}^{-,j}(x) F_{\lambda}^{-,j+1}(x')\Big) \\
     & \strut \qquad + {\mathbf{1}}_{N \setminus ( x, +\infty )_{N_j}} (x')
    \cdot \Im \Big(\frac{1}{ w(\lambda)}
          F_{\lambda}^{-,j+1}(x) F_{\lambda}^{-,j}(x') \Big)
          \Bigr] \; dx'
          \Bigr\} \; d\lambda,
\end{align*}
where $E$ is the resolution of the identity of $A$ (cf.
Theorem~\ref{stone}) and the index $j$ is to be understood modulo
$n$, that is to say, if $j = n$, then $j + 1 = 1$.
\end{prop}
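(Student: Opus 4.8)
The plan is to feed the self-adjoint operator $A$ (Proposition~\ref{selfadjoint}) into Stone's formula (Theorem~\ref{stone}) with $T=A$, and then to replace each resolvent by its explicit integral kernel from Theorem~\ref{theo1}. For real $\lambda$ and $\eps>0$ the points $\lambda\pm i\eps$ lie off the real axis, hence in $\rho(A)$, and satisfy $\Re(\lambda\pm i\eps)=\lambda\ge a_1$ on the relevant range, so Theorem~\ref{theo1} applies and gives, for each fixed $x$,
\[ \frac{1}{2\pi i}\int_{a+\delta}^{b-\delta} h(\lambda)\,\bigl[R(\lambda-i\eps,A)-R(\lambda+i\eps,A)\bigr]f(x)\,d\lambda = \frac{1}{2\pi i}\int_{a+\delta}^{b-\delta} h(\lambda)\int_N \bigl[K(x,x',\lambda-i\eps)-K(x,x',\lambda+i\eps)\bigr]f(x')\,dx'\,d\lambda, \]
the interchange of the $x'$- and $\lambda$-integrations being licit by Fubini once the domination below is in place.

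Next I would exploit the conjugation symmetry. Since $\lambda$ is real, $\lambda+i\eps=\overline{\lambda-i\eps}$, so Lemma~\ref{conjug} yields $K(x,x',\lambda+i\eps)=\overline{K(x,x',\lambda-i\eps)}$ and hence the difference of kernels equals $2i\,\Im K(x,x',\lambda-i\eps)$; combined with the prefactor $1/(2\pi i)$ from Stone's formula this produces the imaginary-part expression appearing in the claim (the numerical constant being a routine by-product). Moreover the integration domain may be shrunk to $(a,b)\cap[a_1,+\infty)$: for $\lambda<a_1$ a full real neighbourhood lies in $\rho(A)$, so $R(\cdot,A)$ is analytic across the axis there and $R(\lambda-i\eps,A)-R(\lambda+i\eps,A)\to 0$ as $\eps\to 0$, annihilating that part of the integral.

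The technical heart is passing to the limits $\eps\to 0^+$ and then $\delta\to 0^+$ inside both integrals, which I would justify by dominated convergence using the limiting absorption principle (Theorem~\ref{lim.abs}). Part (i) supplies the pointwise convergence $K(x,x',\lambda-i\eps)\to K(x,x',\lambda)$, while part (ii) supplies the majorant $|K(x,x',\lambda-i\eps)|\le N(\lambda,\delta)\,e^{\gamma(x+x')}$ with $N(\cdot,\delta)\in L^1_{\mathrm{loc}}([a_1,+\infty))$. As $x$ is a fixed evaluation point and $f$ vanishes off the compact set $B$, the quantity $\int_B e^{\gamma(x+x')}|f(x')|\,dx'$ is finite, and integrating against $h$ over the \emph{compact} $\lambda$-interval (on which $h$ is bounded) gives a fixed $L^1$ dominating function for the inner limit $\eps\to 0^+$; since $N(\lambda,\delta)$ and $\gamma$ increase in $\delta$, the limiting integrand is dominated uniformly for $\delta$ below a fixed $\delta_0$, so the outer limit $\delta\to 0^+$ merely enlarges the interval to $(a,b)\cap[a_1,+\infty)$ by a further application of dominated convergence. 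This leaves, for each fixed $x$, the explicit pointwise expression.

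The main obstacle I foresee is reconciling this pointwise identity with the fact that Stone's formula asserts convergence only in the \emph{strong operator topology}, i.e.\ in $H=L^2(N)$: the natural pointwise majorant $e^{\gamma x}$ is not square-integrable on the semi-infinite branches, so one cannot directly invoke $L^2$-dominated convergence to identify the two limits. I would instead extract from the $L^2$-convergent family in Stone's formula subsequences converging almost everywhere, and match these a.e.\ limits with the everywhere-defined pointwise limit computed above, obtaining equality a.e. Finally I would substitute the two-case form of $K$ from Definition~\ref{def.K} — with the lower sign, since $\Im(\lambda-i\eps)<0$ and the limiting value $\Im(\lambda)=0$ again selects ``$-$'' — and use $F^{-,j}_{\lambda,j}=F^{-,j}_{\lambda}$ and $F^{-,j+1}_{\lambda,j}=F^{-,j+1}_{\lambda}$ on $\overline{N_j}$ (Definition~\ref{gen.eig}) to rewrite the result with the indicators $\mathbf{1}_{\overline{N_j}}(x)$, $\mathbf{1}_{(x,+\infty)_{N_j}}(x')$ and $\mathbf{1}_{N\setminus(x,+\infty)_{N_j}}(x')$, producing exactly the stated expression.
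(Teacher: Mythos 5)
Your proposal is correct and its skeleton coincides with the paper's proof: Stone's formula, insertion of the kernel from Theorem~\ref{theo1} (with the part of the interval below $a_1$ discarded), Lemma~\ref{conjug} to convert the difference of resolvents into $2i\,\Im K(x,x',\lambda-i\eps)$, dominated convergence via the limiting absorption principle for the limits $\eps,\delta\to 0^+$, Fubini, and finally the two-case form of $K$ from Definition~\ref{def.K} with the $-$-sign. The single point of genuine divergence is the step you correctly single out as the obstacle, namely matching the strong-$H$ limit in Stone's formula with the everywhere-defined pointwise limit: you propose extracting subsequences that converge almost everywhere from the $L^2$-convergent family, while the paper instead tests against an arbitrary compactly supported $g\in H$, commutes the limits and the operator-valued integral with the continuous linear form $(\cdot\,f,g)_H$, and concludes from the arbitrariness of $g$. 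Both devices are standard and close the same gap; the paper's weak formulation has the mild advantage that every interchange is justified at the level of scalar integrals over the compact supports of $f$ and $g$, whereas your subsequence argument keeps the computation pointwise and makes explicit why the non-square-integrable majorant $e^{\gamma(x+x')}$ is harmless (only $x'\in B$ matters and $x$ is a fixed evaluation point). Either route yields the stated identity for almost all, hence by continuity in $x$ for all, $x\in N$.
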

\begin{proof}
The proof is analogous to that of Lemma~3.13 in \cite{fam2}. Let $g
\in H$ be vanishing outside $B$. Then
\begin{align}
   & \sprd {h(A) E(a,b) f},{g},H = \sprd{\lim_{\delta \rightarrow 0^{+}}
    \lim_{\varepsilon \rightarrow 0^{+}}
                \frac{1}{2 \pi i} \int_{a+\delta}^{b-\delta} h(\lambda)
                \bigl[ R(\lambda - \varepsilon i, A)
                - R(\lambda + \varepsilon i, A) \bigr]
                \; d \lambda \, f},
          {g},H \label{(2)} \\
  =& \lim_{\delta \rightarrow 0^{+}} \lim_{\varepsilon \rightarrow 0^{+}}
       \frac{1}{2 \pi i}
       \sprd{\int_{a+\delta}^{b-\delta} h(\lambda)
              \bigl[ R(\lambda - \varepsilon i,A)
              - R(\lambda + \varepsilon i, A) \bigr]
                \; d \lambda \, f},
            {g},H \label{(3)} \displaybreak[0]\\
  =& \lim_{\delta \rightarrow 0^{+}} \lim_{\varepsilon \rightarrow 0^{+}}
       \frac{1}{2 \pi i} \int_{a+\delta}^{b-\delta} h(\lambda)
       \sprd{\bigl[ R(\lambda - \varepsilon i, A)
                - R(\lambda + \varepsilon i, A) \bigr]
                f},
            {g},H \; d \lambda \label{(4)} \displaybreak[0]\\
=& \lim_{\delta, \varepsilon \rightarrow 0^{+}}
       \frac{1}{2 \pi i} \int_{(a+\delta,b-\delta) \cap [a_1, + \infty)}
    h(\lambda)  \sprd{\int_{N} f(x') \bigl[ K(\cdot,x',\lambda - i \varepsilon)
                - K(\cdot,x',\lambda + i \varepsilon) \bigr] \; dx'},
            {g(\cdot)},H \; d \lambda \label{(5)} \displaybreak[0] \\
  =& \lim_{\delta, \varepsilon \rightarrow 0^{+}}
        \frac{1}{2 \pi i} \int_{(a+\delta,b-\delta) \cap [a_1, + \infty)}
       h(\lambda) \sprd{\int_{N} f(x') \bigl[ K(\cdot,x',\lambda - i \varepsilon)-
                \overline{K(\cdot,x',\lambda - i \varepsilon)} \bigr]
                \; dx'},
             {g(\cdot)},H \; d \lambda \label{(6)} \displaybreak[0] \\
  =& \lim_{\delta, \varepsilon \rightarrow 0^{+}}
         \frac{1}{2 \pi i} \int_{ (a+\delta,b-\delta) \cap [a_1, + \infty)} h(\lambda)
         \sprd{\int_{N} f(x') \, 2 i \, \Im(
                K(\cdot,x',\lambda - i \varepsilon)) \; dx'},
              {g(\cdot)},H \; d \lambda \label{(7)} \displaybreak[0] \\
  =& \lim_{\delta \rightarrow 0^{+}} \frac{1}{\pi}
         \int_{(a+\delta,b-\delta) \cap [a_1, + \infty)} h(\lambda)
         \sprd{\int_{N} f(x') [ \lim_{\varepsilon \rightarrow 0^{+}}
                \Im(K(\cdot,x',\lambda - i \varepsilon)) ] \; dx'},
              {g(\cdot)},H \; d \lambda \label{(8)} \displaybreak[0] \\
  =& \sprd{\frac{1}{\pi} \int_{(a,b) \cap [a_1, + \infty)} h(\lambda)
 \int_N f(x') \Im(K(\cdot,x',\lambda - i 0)) \; dx' \; d \lambda},
          {g(\cdot)},H \label{(9)} \displaybreak[0] \\
 = & \int_{N} \frac{1}{\pi} \int_ {(a,b) \cap [a_1, + \infty)}
          h(\lambda) \Bigl\{ \int_{N} f(x') \Im \Bigl[  \frac{1}{ w(\lambda)} \sum_{j=1}^{n}
          {\mathbf{1}}_{\overline{N_j}}(x)
          \Big(
          {\mathbf{1}}_{( x, +\infty )_{N_j}}(x')
          F_{\lambda}^{-,j}(x) F_{\lambda}^{-,j+1}(x') \label{(10)}\\
  & \qquad \quad \strut + {\mathbf{1}}_{N \setminus ( x, +\infty )_{N_j}}(x')
    F_{\lambda}^{-,j+1}(x) F_{\lambda}^{-,j}(x') \Big)
          \Bigr] \; dx'
          \Bigr\} \;
          d\lambda \, g(x) \; dx. \nonumber
\end{align}
Here, the justifications for the equalities are the following:
\begin{itemize}
\item[\eqref{(2)}:] Stone's formula (Theorem~\ref{stone}).
\item[\eqref{(3)}:] After applying the operator valued integral to $f$, the
      two limits are in $H$. So they commute with the scalar product in
      $H$.
\item[\eqref{(4)}:] $\sprd {\cdot f},{g},H$ is a continuous linear form on
      $\lmd (H)$, and can therefore be commuted with the vector-valued
      integration. Note that $\lambda \mapsto R(\lambda,A)$ is continuous on
      the half-plane $\{ \lambda \in \C : \Re(\lambda) < a_1 \}$, since the
      resolvent is holomorphic outside the spectrum, cf. Proposition~\ref{selfadjoint}.
\item[\eqref{(5)}:] Theorem~\ref{theo1}.
\item[\eqref{(6)}:] Lemma~\ref{conjug}.
\item[\eqref{(7)}:] $z-\overline{z} = 2i \cdot \hbox{Im\,}z \ \forall z \in \C$.
\item[\eqref{(8)}:] Dominated convergence. Since supp$\, f$, supp$\, g$
     and $[a,b]$ are compact, we use the limiting absorption principle
     (Theorem~\ref{lim.abs}).
\item[\eqref{(9)}:] Fubini's Theorem.
\item[\eqref{(10)}:] Definition~\ref{def.K}.
\end{itemize}
The assertion follows, since $g$ was arbitrary with compact support.
\end{proof}
The unpleasant point about the formula in the above proposition is the apparent cut along the diagonal $\{x = x'\}$ expressed by the characteristic functions in the variable $x'$. In fact, there is no discontinuity and the
two integrals recombine with respect to $x'$. This is a consequence of the next lemma that gives an explicit representation of the integrand
above.

In the following, we use the convention
\[ a_{n+1} := + \infty,
\]
in order to unify notation and we set
\[ \xi_j' := i \xi_j.
\]
%
\begin{lem} \label{l-calc.im}
Let $j,k \in \{ 1, \cdots, n \}$ and let $\lambda$ be fixed in
$(a_p,a_{p+1})$, with $p \in \{ 1, \cdots, n \}$.
Then $\Im \bigl[ \frac{1}{w} \bigl( F_\lambda^{-,j+1} \bigr)_j
(x) \bigl( F_\lambda^{-,j} \bigr)_k (x') \bigr]$ is given by the following
expressions, respectively:
\begin{itemize}
\item If $k \ge j > p$ or $j \geq k > p$ {\bf (Case (a))}
    \[ \Im \Bigl( \frac{1}{w} \Bigr) e^{-\xi'_j x - \xi'_k x'},
    \]
\item If $j < k \le p$ or $k < j \le p$ {\bf (Case (b), $j \neq k$)}
    \begin{align*}
      & \Im \Bigl( \frac{1}{w} \Bigr) \cos(\xi_j x) \cos(\xi_k x') - \Im
        \bigl( \frac{1}{w} \Bigr) \sin(\xi_j x) \sin(\xi_k x') \\
      & \qquad \strut - \Re \Bigl( \frac{1}{w} \Bigr) \cos(\xi_j x)
        \sin(\xi_k x') - \Re \Bigl( \frac{1}{w} \Bigr) \sin(\xi_j x)
        \cos(\xi_k x'),
    \end{align*}
\item If $j = k \leq p$ {\bf (Case (b), $j=k$)}
    \begin{align*}
      & \Im \Bigl( \frac{1}{w} \Bigr) \cos(\xi_j x) \cos(\xi_k x') - \Im
        \Bigl( \frac{s_k}{w} \Bigr) \sin(\xi_j x) \sin(\xi_k x') \\
      & \qquad \strut - \Re \Bigl( \frac{1}{w} \Bigr) \cos(\xi_j x)
        \sin(\xi_k x') - \Re \Bigl( \frac{1}{w} \Bigr) \sin(\xi_j x)
        \cos(\xi_k x'),
    \end{align*}
\item If $j \leq p < k$ {\bf (Case (c))}
    \[ \Im \Bigl( \frac{1}{w} \Bigr) e^{-\xi'_k x'} \cos(\xi_j x) + \Im
        \Bigl( \frac{1}{iw} \Bigr) e^{-\xi'_k x'} \sin(\xi_j x),
    \]
\item If $k \leq p < j$ {\bf (Case (d))}
    \[ \Im \Bigl( \frac{1}{w} \Bigr) e^{-\xi'_j x} \cos(\xi_k x') - \Im
        \Bigl( \frac{1}{iw} \Bigr) e^{-\xi'_j x} \sin(\xi_k x').
    \]
\end{itemize}
\end{lem}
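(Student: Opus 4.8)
The plan is to prove Lemma~\ref{l-calc.im} by direct computation, substituting the explicit formulas for the generalized eigenfunctions from Definition~\ref{gen.eig} into the expression $\Im\bigl[\frac{1}{w}(F_\lambda^{-,j+1})_j(x)(F_\lambda^{-,j})_k(x')\bigr]$ and separating the resulting complex quantities into real and imaginary parts. The decisive preliminary observation is the effect of fixing $\lambda\in(a_p,a_{p+1})$: for an index $m\le p$ we have $\lambda>a_m$, so $\lambda-a_m>0$ and $\xi_m(\lambda)=\sqrt{(\lambda-a_m)/c_m}$ is real and positive; for an index $m>p$ we have $\lambda<a_m$, so $\lambda-a_m<0$, and with the chosen branch cut $\sqrt{re^{i\phi}}=\sqrt{r}\,e^{i\phi/2}$, $\phi\in[-\pi,\pi)$, the argument of $\lambda-a_m$ is $-\pi$ (approaching from below, i.e.\ the $-$-branch relevant for $\Im(\lambda)\le0$), whence $\xi_m(\lambda)$ is purely imaginary with $\xi'_m:=i\xi_m$ real and positive. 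This is exactly what converts oscillatory factors $\exp(\pm i\xi_m x)$ into decaying exponentials $\exp(-\xi'_m x)$, which is the analytic content of the multiple tunnel effect.

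First I would record how the two building blocks simplify. For the "transmitting" index $m>p$, $F_{\lambda,m}^{-,m'}(x)=\exp(-i\xi_m x)=\exp(-\xi'_m x)$ is a real decaying exponential, and for the off-diagonal factor with $m\ne$ the eigenfunction's own branch the same formula applies. For the "oscillating" index $m\le p$, $\xi_m$ is real so $\cos(\xi_m x)$ and $\sin(\xi_m x)$ are real, while the diagonal piece $F_{\lambda,m}^{-,m}(x)=\cos(\xi_m x)-is_m\sin(\xi_m x)$ mixes real and imaginary parts through $s_m$. With these reductions in hand the five cases are simply a bookkeeping of which of $j,k$ fall above or below $p$ and whether the relevant eigenfunction component is diagonal (the branch of $F^{-,j}$ evaluated on $N_j$ itself, carrying $s_j$) or off-diagonal. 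The general principle is $\Im(\alpha\beta)=\Re(\alpha)\Im(\beta)+\Im(\alpha)\Re(\beta)$ applied with $\alpha=1/w$ and $\beta$ the product of the two eigenfunction factors.

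Concretely I would proceed case by case. In Case (a), where both $j,k>p$, both factors are real decaying exponentials, so the product is real and the imaginary part simply carries the factor $\Im(1/w)$, giving $\Im(1/w)\,e^{-\xi'_j x-\xi'_k x'}$. In Cases (b), where both $j,k\le p$, both factors are built from real sines and cosines; here one must distinguish whether the factor on $N_j$ is off-diagonal (when $j\ne k$, the component $(F^{-,j})_k$ on $N_k$ with $k\ne j$ is $\exp(-i\xi_k x')$, but the $x$-factor $(F^{-,j+1})_j$ is the off-diagonal $\exp(-i\xi_j x)$) or diagonal (when $j=k$, the $x'$-factor becomes $\cos(\xi_k x')-is_k\sin(\xi_k x')$, injecting the extra $s_k$ into the $\sin\cdot\sin$ term as $\Im(s_k/w)$). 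Expanding $\exp(-i\xi x)=\cos(\xi x)-i\sin(\xi x)$ and multiplying out produces the four stated terms once one sorts the real/imaginary parts of $1/w$ against the four products of trigonometric functions. In the mixed Cases (c) and (d) one factor is oscillatory and the other decaying; for instance in Case (c) ($j\le p<k$) the $x$-factor is trigonometric and the $x'$-factor is the real exponential $e^{-\xi'_k x'}$, so writing $\cos(\xi_j x)\pm\ldots$ and distributing the imaginary part over the real/imaginary split of $1/w$ (and of $1/(iw)$ for the $\sin$ term) yields the two stated summands; Case (d) is symmetric with the roles of $x,x'$ and $j,k$ interchanged.

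The main obstacle I anticipate is not conceptual but organizational: keeping the branch-cut sign conventions consistent throughout, especially the sign carried by $\xi'_j=i\xi_j$ and the systematic use of the $-$-sign version of the eigenfunctions (as dictated by $\Im(\lambda)\le0$ in the limiting-absorption setting of Proposition~\ref{res.id}), together with the correct identification of which component of $F^{-,j+1}$ and $F^{-,j}$ is diagonal versus off-diagonal in each case. A subtle point worth verifying is the emergence of the $s_k$ factor precisely and only in the $j=k\le p$ subcase of Case (b): it arises because only there do both arguments $x$ and $x'$ lie on the same oscillating branch carrying $s_j=s_k$ in the cosine–sine representation, whereas in the $j\ne k$ subcase the two factors live on different branches and no $s$ appears. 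Once these conventions are fixed, each case reduces to an elementary but careful expansion, and no deeper machinery is required.
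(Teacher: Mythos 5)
Your overall strategy is exactly the paper's: observe that $\xi_m(\lambda)$ is real for $m\le p$ and purely imaginary (with $\xi_m'=i\xi_m>0$) for $m>p$, then expand the product of the two eigenfunction components and sort real and imaginary parts against $1/w$. Cases (b) with $j\neq k$, (c) and the $j\neq k$ part of (a) are indeed the routine bookkeeping you describe. But there is a concrete gap at the diagonal subcases, which is precisely where the only real content of the lemma sits. In Case (a) you assert that ``both factors are real decaying exponentials''; this is false when $j=k>p$, a subcase explicitly included in the statement ($k\ge j>p$ covers $j=k$). There the $x'$-factor is the \emph{diagonal} component $\bigl(F_\lambda^{-,j}\bigr)_j(x')=\cos(\xi_j x')-is_j\sin(\xi_j x')=\tfrac12(1+s_j)e^{-\xi_j'x'}+\tfrac12(1-s_j)e^{\xi_j'x'}$, which is neither real nor decaying. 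The stated formula still holds, but only after one checks that $\tfrac{1-s_j}{2w}=\tfrac{i}{2c_j\xi_j}=-\tfrac{1}{2c_j\xi_j'}$ is real (so the growing exponential drops out of the imaginary part) and that $\Im\bigl(\tfrac{1+s_j}{2w}\bigr)=\Im\bigl(\tfrac1w\bigr)$. The paper's proof supplies exactly this by listing $\Re\bigl(F_\lambda^{-,k}\bigr)_k$ and $\Im\bigl(F_\lambda^{-,k}\bigr)_k$ separately for $\xi_k\in i\R$; your proposal omits it entirely.

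A smaller instance of the same oversight occurs in Case (b) with $j=k$: the raw expansion gives the coefficient $-\Re(s_k/w)$ on the $\cos(\xi_jx)\sin(\xi_kx')$ term, not the stated $-\Re(1/w)$, so $s_k$ does \emph{not} enter only the $\sin\cdot\sin$ term as you claim. One needs the identity $\Re\bigl(\tfrac{s_k-1}{w}\bigr)=0$, which holds because $\tfrac{s_k-1}{w}=-\tfrac{i}{c_k\xi_k}$ is purely imaginary for $k\le p$. Finally, a caveat on Case (d): carrying out your ``symmetric'' computation literally yields $+\Im\bigl(\tfrac{1}{iw}\bigr)$ on the sine term, the same sign as in Case (c), whereas the statement displays a minus sign; the linear system in the subsequent lemma is consistent with the plus sign, so this appears to be a sign slip in the statement rather than in your reasoning, but you should be aware that your derivation will not literally reproduce the displayed formula there.
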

%
\begin{proof}
Since $\lambda$ belongs to $(a_p,a_{p+1})$, $\xi_j(\lambda)$ is a
real number, if and only if $j \leq p$. Otherwise, $\xi_j$ is a
purely imaginary number and we have $\xi_j = -i \xi'_j$.

Now, the proof is pure calculation, using the following expressions
for the generalized eigenfunctions in the case $j \neq k$
\begin{align*}
  \Re \bigl( F_{\lambda}^{-,j} \bigr)_k (x) &= \begin{cases}
        \cos(\xi_k x), & \text{if } \xi_k \in \R, \text{ i.e. } k \leq
            p, \\
        e^{- \xi'_k x}, & \text{if } \xi_k \in i \R, \text{ i.e. } k >
            p,
      \end{cases} \\
  \Im \bigl( F_{\lambda}^{-,j} \bigr)_k (x) &= \begin{cases}
        - \sin(\xi_k x), & \text{if } \xi_k \in \R, \text{ i.e. } k
            \leq p,  \\
        0, & \text{if } \xi_k \in i \R, \text{ i.e. } k > p,
    \end{cases}
\end{align*}
and for $j = k$
\begin{align*}
  & \Re \bigl( F_{\lambda}^{-,k} \bigr)_k (x) = \left\{ \begin{array}{ll} \cos(\xi_k x) + \Im(s_k)
    \sin(\xi_k x), & \text{if } \xi_k \in \R, \text{ i.e. } k \leq p, \\
    \Re \left( \frac{1}{2} (1 + s_k) \right) e^{- \xi'_k x} + \Re \left(
        \frac{1}{2} (1 - s_k) \right) e^{\xi'_k x}, & \text{if } \xi_k
        \in i \R, \text{ i.e. } k > p,
    \end{array} \right. \\
  &\Im \bigl( F_{\lambda}^{-,k} \bigr)_k (x) = \left\{ \begin{array}{ll}
    - \Re(s_k) \sin(\xi_k x), & \text{if } \xi_k \in \R, \text{ i.e. } k
        \leq p, \\
    \Im \left( \frac{1}{2} (1 + s_k) \right) e^{- \xi'_k x} + \Im \left(
        \frac{1}{2} (1 - s_k) \right) e^{\xi'_k x}, & \text{if } \xi_k
        \in i \R, \text{ i.e. } k > p,
    \end{array} \right.
\end{align*}
respectively.
\end{proof}
%
\begin{theo} \label{res.id_cycl}
Take $f \in H = \prod_{j=1}^n L^2(N_j)$, vanishing almost everywhere
outside a compact set $B \subset N$ and let $- \infty < a < b < +
\infty$. Then for any continuous scalar function $h$ defined on the
real line and for all $x \in N$
\begin{equation} \label{e-remrecomb}
\bigl( h(A) E(a,b)f \bigr)(x) =
          \int_{[a,b]\cap [a_1, +\infty)} \!\!\!h(\lambda)
          \sum_{j=1}^{n}
          {\mathbf{1}}_{\overline{N_j}}(x)
          \Bigl\{
          \int_N f(x')
          \Im \Bigl[\frac{1}{ w(\lambda)}
          F_{\lambda}^{-,j+1}(x) F_{\lambda}^{-,j}(x') \Bigr]
          dx'
          \Bigr\} \;
          d\lambda,
\end{equation}
where again the index $j$ is to be understood modulo $n$, i.e, if
$j = n$, then $j + 1 = 1$.
\end{theo}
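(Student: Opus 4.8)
The plan is to take the formula for $h(A)E(a,b)f$ from Proposition~\ref{res.id} as the starting point and to show that its inner integrand, which is split into the piece carrying $\mathbf{1}_{(x,+\infty)_{N_j}}(x')$ and the piece carrying $\mathbf{1}_{N\setminus(x,+\infty)_{N_j}}(x')$, is in fact given by \emph{one} and the same expression on the whole of $N$. Once this is established, the two complementary indicators add up to $\mathbf{1}_N\equiv 1$, the two integrals in $x'$ fuse into the single integral $\int_N$, and \eqref{e-remrecomb} follows at once (the replacement of $(a,b)$ by $[a,b]$ only alters the $\lambda$-domain on a null set). Thus everything reduces to a pointwise identity.

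Fix $j$ and $x\in\overline{N_j}$. The second piece is defined on all of $N$, whereas the first is supported on $\{x'\in N_j:x'>x\}$; on this diagonal set both $x$ and $x'$ lie on $N_j$, so it suffices to prove
\[
 \Im\Bigl[\tfrac{1}{w}\,(F_\lambda^{-,j})_j(x)\,(F_\lambda^{-,j+1})_j(x')\Bigr]
 =\Im\Bigl[\tfrac{1}{w}\,(F_\lambda^{-,j+1})_j(x)\,(F_\lambda^{-,j})_j(x')\Bigr].
\]
Inserting $(F_\lambda^{-,j})_j(y)=\cos(\xi_j y)-is_j\sin(\xi_j y)$ together with $(F_\lambda^{-,j+1})_j(y)=e^{-i\xi_j y}$ (the latter being the off-diagonal form, since $j+1\not\equiv j$ for $n\ge 2$) and expanding, a short computation reduces the difference of the two bracketed quantities, \emph{before} taking imaginary parts, to
\[
 \frac{i(s_j-1)}{w}\,\sin\bigl(\xi_j(x'-x)\bigr).
\]

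The heart of the matter is then the elementary identity $s_j-1=-\dfrac{\sum_l c_l\xi_l}{c_j\xi_j}$, which follows directly from the definition of $s_j$. Since the sign relevant to Proposition~\ref{res.id} is $w=-i\sum_l c_l\xi_l$, i.e.\ $\sum_l c_l\xi_l=iw$, this yields
\[
 \frac{s_j-1}{w}=\frac{-i}{c_j\xi_j},\qquad\text{so that}\qquad
 \frac{i(s_j-1)}{w}\,\sin\bigl(\xi_j(x'-x)\bigr)=\frac{\sin\bigl(\xi_j(x'-x)\bigr)}{c_j\xi_j}.
\]
It remains to check that this quantity is \emph{real}, whence its imaginary part vanishes and the two integrands agree. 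For $j\le p$ the number $\xi_j$ is real, so numerator and denominator are both real. For $j>p$ one uses $\xi_j=-i\xi_j'$ with $\xi_j'>0$, giving $\sin(\xi_j(x'-x))=-i\sinh(\xi_j'(x'-x))$ and $c_j\xi_j=-ic_j\xi_j'$; the two imaginary units cancel and the quotient equals $\sinh(\xi_j'(x'-x))/(c_j\xi_j')$, again real.

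Having the pointwise identity, the recombination is immediate: on $\{x'\in N_j:x'>x\}$ the first integrand may be replaced by the second, and then $\mathbf{1}_{(x,+\infty)_{N_j}}+\mathbf{1}_{N\setminus(x,+\infty)_{N_j}}\equiv 1$ turns the braced term of Proposition~\ref{res.id} into $\int_N f(x')\,\Im\bigl[\tfrac{1}{w}F_\lambda^{-,j+1}(x)F_\lambda^{-,j}(x')\bigr]\,dx'$, which is exactly \eqref{e-remrecomb}. I expect the only delicate point to be the reality check on the evanescent branches $j>p$, where the trigonometric functions become hyperbolic and the purely imaginary factors must be tracked with care; as an independent cross-check one can instead expand the first integrand explicitly and match it term by term against Lemma~\ref{l-calc.im} (Case~(a) for $j=k>p$, and Case~(b) with $j=k$ for $j=k\le p$), though this route is computationally heavier.
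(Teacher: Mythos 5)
Your proof is correct and follows the same overall strategy as the paper: reduce \eqref{e-remrecomb} to the statement that the two integrands in Proposition~\ref{res.id} coincide on the overlap region $\{x'\in N_j:\ x'>x\}$, where both $x$ and $x'$ lie on the same branch. The paper verifies this coincidence by inspecting the explicit $j=k$ formulas of Lemma~\ref{l-calc.im} and observing their symmetry under $x\leftrightarrow x'$, whereas you verify it by a direct computation: the difference of the two products, before taking imaginary parts, collapses via $s_j-1=-\sum_l c_l\xi_l/(c_j\xi_j)$ and $w=-i\sum_l c_l\xi_l$ to $\sin\bigl(\xi_j(x'-x)\bigr)/(c_j\xi_j)$, which is real on both the propagating ($\xi_j\in\R$) and evanescent ($\xi_j\in i\R$) branches, so the imaginary parts agree. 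Your route is self-contained (it does not need Lemma~\ref{l-calc.im}) and is in fact slightly sharper: it exhibits the exact, generally nonzero, real difference of the two complex products, which makes transparent why only the imaginary parts — and not the products themselves — coincide. Your remarks on the indicator functions summing to $1$ and on $(a,b)$ versus $[a,b]$ being a null-set change in $\lambda$ are both correct.
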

%
\begin{proof}
All the work has already been done. It remains to inspect the formulae for $j =
k$ in the cases (a) and (b) of Lemma~\ref{l-calc.im}, to observe that the
expressions are symmetric in $x$ and $x'$. So for $x,x' \in N_j$ with $x < x'$
we find $F_\lambda^{-,j}(x) F_\lambda^{-,j+1}(x') = F_\lambda^{-,j}(x')
F_\lambda^{-,j+1}(x)$, which implies the assertion.
\end{proof}

%
%

%
%
\section{Symmetrization} \label{spe.rep}
%
%
%
%
As was already explained in the introduction, the aim of this
section will be to find complex numbers $q_{l,m}$, $l,m \in \{1,
\dots, n\}$, such that the resolution of identity of $A$ can be
written as
\begin{equation} \label{specrepre}
  \left( E(a,b) f \right) (x) = \int_a^b \sum_{l,m = 1}^n q_{lm}(\lambda)
     F_\lambda^{-,l}(x) \int_N \overline{F_\lambda^{-,m}}(x')f(x') \; d x'
    \; d \lambda,
\end{equation}
in order to eliminate the cyclic structure of the formula in
Proposition~\ref{res.id_cycl}.

In this section we shall often suppress the dependence on $\lambda$
of several quantities for the ease of notation, so $s_j =
s_j(\lambda)$, $q_{l,m} = q_{l,m}(\lambda)$, $\xi_j =
\xi_j(\lambda)$, $\xi_j' = \xi_j'(\lambda)$ and $w = w(\lambda)$.
%
\begin{lem} \label{lem-Leb}
Given $x \in N_j$, equation \eqref{specrepre} is satisfied for all
$a_1 \le a < b < + \infty$ and all $f \in L^2(N)$ with compact
support, if and only if for all $j = 1, \dots, n$
\begin{equation} \label{e-LebLem}
  \Im \Bigl[ \frac{1}{w} F_\lambda^{-,j+1} (x) F_\lambda^{-,j} (x') \Bigr] =
    \sum_{l,m = 1}^n q_{lm}(\lambda) F_\lambda^{-,l}\bigr (x)
    \overline{F_\lambda^{-,m}} (x')
\end{equation}
for almost all $x' \in N$ and $\lambda \ge a_1$. Here again the
index $j$ has to be understood modulo $n$, that is to say, if $j=n$,
then $j+1=1$.
\end{lem}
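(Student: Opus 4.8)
The plan is to establish the equivalence in Lemma~\ref{lem-Leb} by comparing the integral representation for $E(a,b)f$ supplied by Theorem~\ref{res.id_cycl} (taking $h\equiv 1$) with the ansatz \eqref{specrepre}. First I would note that Theorem~\ref{res.id_cycl} gives, for $x\in N_j$ and every admissible $a,b$ and $f$,
\[
\bigl(E(a,b)f\bigr)(x)=\int_{[a,b]\cap[a_1,+\infty)}\Bigl\{\int_N f(x')\,\Im\Bigl[\tfrac{1}{w}F_\lambda^{-,j+1}(x)F_\lambda^{-,j}(x')\Bigr]\,dx'\Bigr\}\,d\lambda,
\]
while \eqref{specrepre} rewrites the same object, after substituting $\int_N\overline{F_\lambda^{-,m}}(x')f(x')\,dx'$, as the $\lambda$-integral of $\int_N f(x')\sum_{l,m}q_{lm}(\lambda)F_\lambda^{-,l}(x)\overline{F_\lambda^{-,m}}(x')\,dx'$. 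Thus both sides are integrals over $\lambda\in[a,b]\cap[a_1,+\infty)$ of expressions of the form $\int_N f(x')\,\Phi(\lambda,x,x')\,dx'$, and the claim is precisely that these two kernels agree for (almost) all $x'$ and $\lambda\ge a_1$.

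The core of the argument is a double application of the Lebesgue differentiation theorem in the variable $\lambda$, which is why the lemma is named as it is. Fixing $x\in N_j$, the equality of the two $\lambda$-integrals for \emph{all} $a_1\le a<b<+\infty$ forces, by differentiating with respect to the upper (or lower) endpoint at almost every $\lambda$, the equality of the inner integrals
\[
\int_N f(x')\,\Im\Bigl[\tfrac{1}{w}F_\lambda^{-,j+1}(x)F_\lambda^{-,j}(x')\Bigr]\,dx'=\int_N f(x')\sum_{l,m=1}^n q_{lm}(\lambda)F_\lambda^{-,l}(x)\overline{F_\lambda^{-,m}}(x')\,dx'
\]
for almost every $\lambda\ge a_1$. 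For this step I would invoke the local integrability in $\lambda$ furnished earlier: the bound $|K(x,x',\lambda-i\eps)|\le N(\lambda,\delta)e^{\gamma(x+x')}$ from Theorem~\ref{lim.abs} together with $N(\cdot,\delta)\in L^1_{\mathrm{loc}}([a_1,+\infty))$ guarantees that the integrand of the $\lambda$-integral is in $L^1_{\mathrm{loc}}$, so that the fundamental theorem of calculus for Lebesgue integrals applies and the differentiation is legitimate. Since $\overline{F_\lambda^{-,m}}=F_\lambda^{+,m}$ by the conjugation identity recorded in the proof of Lemma~\ref{conjug}, the right-hand kernel is genuinely expressed in terms of the generalized eigenfunctions, matching the notation of \eqref{e-LebLem}.

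Next, having equality of the two $x'$-integrals against every compactly supported $f\in L^2(N)$, I would let $f$ range over such test functions and conclude by the fundamental lemma of the calculus of variations (duality) that the two kernels coincide as functions of $x'$ for almost every $x'\in N$, for almost every $\lambda\ge a_1$, which is exactly \eqref{e-LebLem}. The converse implication is immediate: if \eqref{e-LebLem} holds pointwise (a.e.\ in $x'$ and $\lambda$), one simply multiplies by $f(x')$, integrates over $N$, then over $\lambda\in[a,b]\cap[a_1,+\infty)$, and compares with Theorem~\ref{res.id_cycl} to recover \eqref{specrepre}. The main obstacle, and the point requiring care rather than cleverness, is the interchange of the order of the differentiation in $\lambda$ with the $x'$-integration and the choice of a single null set in $\lambda$ that works simultaneously for the differentiation and for all test functions $f$; this is handled by first fixing a countable dense family of $f$'s, extracting a common null set, and then passing to general $f$ by the uniform local $L^1$-domination coming from Theorem~\ref{lim.abs}, so that dominated convergence transfers the a.e.\ identity to the whole space of admissible $f$.
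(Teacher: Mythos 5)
Your proposal is correct and follows essentially the same route as the paper: the paper's key ``general observation'' (that $g\in L^1(I)$ with $\int_J g=0$ for all subintervals $J$ forces $g=0$ a.e., proved via Lebesgue points) is exactly the Lebesgue differentiation step you invoke, and both arguments then conclude with the fundamental lemma of the calculus of variations, the converse being the same direct integration. Your remark about extracting a single null set in $\lambda$ valid for all test functions $f$ via a countable dense family is a point of care that the paper's proof passes over silently, but it does not change the substance of the argument.
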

\begin{proof}
If functions $q_{l,m}$, $l,m = 1, \dots, n$, satisfy \eqref{e-LebLem} and if
$a_1 \le a < b < + \infty$, we get by \eqref{e-remrecomb}
\begin{align*}
(E(a,b)f)(x) &=
          \int\limits_{a}^{b}
          \sum\limits_{j=1}^{n}
          {\mathbf{1}}_{\overline{N_j}}(x)
          \Bigl\{
          \int\limits_{N} f(x')
    \sum_{l,m = 1}^n q_{lm}(\lambda) F_\lambda^{-,l}\bigr (x)
    \overline{F_\lambda^{-,m}} (x')
    \;dx'
          \Bigr\} \; d\lambda \\
  &= \int_a^b \sum_{l,m = 1}^n q_{lm}(\lambda)
     F_\lambda^{-,l}(x) \int_N \overline{F_\lambda^{-,m}}(x')f(x') \; d x'
    \; d \lambda,
\end{align*}
which is \eqref{specrepre}.

For the converse implication, let \eqref{specrepre} be satisfied for some $x
\in N_j$ and all $a_1 \le a < b < + \infty$, as well as all $f \in L^2(N)$ with
compact support. This means by \eqref{e-remrecomb}
\[ \int_{a}^{b}
\!
\int_{N} f(x') \Im \Bigl[\frac{1}{ w(\lambda)}
          F_{\lambda}^{-,j+1}(x) F_{\lambda}^{-,j}(x') \Bigr]
          dx'
%
\!
           d\lambda = \int_a^b \sum_{l,m = 1}^n q_{lm}(\lambda)
     F_\lambda^{-,l}(x) \int_N \overline{F_\lambda^{-,m}}(x')f(x') \; d x'
%
\!
    d \lambda.
\]
Firstly, we want to see that the integrands of the
$\lambda$-integrals on both sides in fact have to be equal almost everywhere.
In order to do so, we observe that they both are in $L^1((a_1, +\infty))$ and
use the following general observation: If $I$ is an interval and $g \in
L^1(I)$ satisfies $\int_J g = 0$ for all intervals $J \subseteq I$, then $g =
0$ almost everywhere in $I$. Indeed, in this case we have for any Lebesgue
point $x_0 \in I$ of $g$ and every $\eps > 0$ (cf. \cite[Theorem~8.8]{Rud})
\begin{align*}
  |g(x_0)| &= \biggl| \frac{1}{2 \eps} \int_{x_0 - \eps}^{x_0 + \eps} \bigl(
    g(x_0) - g(x) \bigr) \; d x + \frac{1}{2 \eps}
    \underbrace{\int_{x_0 - \eps}^{x_0 + \eps} g(x) \; d x}_{= 0}
    \biggr| \\
  &\le \frac{1}{2 \eps} \int_{x_0 - \eps}^{x_0 + \eps} \bigl| g(x) - g(x_0)
    \bigr| \; d x \longrightarrow 0 \quad (\eps \to 0),
\end{align*}
which implies $g(x_0) = 0$ for almost all $x_0 \in I$.

This implies
\[ \int_{N} f(x') \Im \Bigl[\frac{1}{ w(\lambda)}
          F_{\lambda}^{-,j+1}(x) F_{\lambda}^{-,j}(x') \Bigr]
          dx' = \int_N  \sum_{l,m = 1}^n q_{lm}(\lambda)
     F_\lambda^{-,l}(x) \overline{F_\lambda^{-,m}}(x')f(x') \; d x'
\]
for almost all $\lambda \ge a_1$ and all $f \in L^2(N)$ with compact
support. By the fundamental theorem of variational calculus this
implies the assertion.
\end{proof}
In a next step, we explicitely write down equation \eqref{e-LebLem} as a linear
system for the values $q_{lm}$.
%
\begin{lem} \label{l-syst.qlm}
The equation
\[ \Im \Bigl[ \frac{1}{w} \bigl( F_\lambda^{-,j+1} \bigr)_j (x) \bigl(
    F_\lambda^{-,j} \bigr)_k (x') \Bigr] = \sum_{l,m = 1}^n
    q_{lm}(\lambda) \bigl( F_\lambda^{-,l}\bigr)_j (x) \bigl(
    \overline{F_\lambda^{-,m}} \bigr)_k (x')
\]
holds for any $(x,x') \in N_j \times N_k$ and $\lambda \in (a_p , a_{p+1})$,
if and only if
\begin{itemize}
\item {\bf Case (a)}: if $k \geq j > p$ or if $j \geq k >p$
    \[ \left \{ \begin{array}{rcl}
        q_{jk} &=& 0 \\
        \sum_{l \neq j} q_{lk} &=& 0\\
        \sum_{m \neq k} q_{jm} &=& 0\\
        \sum_{l \neq j, m \neq k} q_{lm} &=& \Im \left[ \frac{1}{w}
            \right],
      \end{array} \right.
    \]
\item {\bf Case (b), $j \neq k$}: if $j < k \leq p$ or if $k < j \leq p$
    \[ \left \{ \begin{array}{llllcl}
        &&&\sum_{l,m} q_{lm} &=& \Im \left[ \frac{1}{w} \right]  \\
        \sum_{l \neq j, m \neq k} q_{lm} &+ \overline{s_k}
            \sum_{l \neq j} q_{lk} &+ s_j \sum_{m \neq k} q_{jm} &+
            s_j \cdot \overline{s_k} \cdot q_{jk} &=& - \Im \left[
            \frac{1}{w} \right]  \\
        \sum_{l \neq j, m \neq k} q_{lm} &+ \overline{s_k}
            \sum_{l \neq j} q_{lk} &+ \sum_{m \neq k} q_{jm} &+
            \overline{s_k} \cdot q_{jk} &=& - i \cdot \Im \left[
            \frac{1}{iw} \right]  \\
        \sum_{l \neq j, m \neq k} q_{lm} &+ \sum_{l \neq j} q_{lk} &+
            s_j \sum_{m \neq k} q_{jm} &+ s_j \cdot q_{jk} &=& i
            \cdot \Im \left[ \frac{1}{iw} \right],
        \end{array} \right.
    \]
\item {\bf Case (b), $j=k$}: if $j = k \leq p$
    \[ \left \{ \begin{array}{llllcl}
        &&&\sum_{l,m} q_{lm} &=& \Im \left[ \frac{1}{w} \right]  \\
        \sum_{l \neq j, m \neq j} q_{lm} &+ \overline{s_j}
            \sum_{l \neq j} q_{lj} &+ s_j \sum_{m \neq j} q_{jm} &+
            s_j \cdot \overline{s_j} \cdot q_{jj} &=& - \Im \left[
            \frac{s_j}{w} \right] \\
        \sum_{l \neq j, m \neq j} q_{lm} &+ \overline{s_j}
            \sum_{l \neq j} q_{lj} &+ \sum_{m \neq j} q_{jm} &+
            \overline{s_j} \cdot q_{jj} &=& - i \cdot \Im \left[
            \frac{1}{iw} \right]  \\
        \sum_{l \neq j, m \neq j} q_{lm} &+ \sum_{l \neq j} q_{lj} &+
            s_j \sum_{m \neq j} q_{jm} &+ s_j \cdot q_{jj} &=& i
            \cdot \Im \left[ \frac{1}{iw} \right],
        \end{array} \right.
    \]
\item {\bf Case (c)}: if $j \leq p < k$
    \[ \left \{ \begin{array}{rcl}
        q_{jk} &=& 0 \\
        \sum_{l \neq j} q_{lk} &=& 0 \\
        \sum_{m \neq k} q_{jm} &=& \Im \left[ \frac{1}{w} \right] \\
        \sum_{l \neq j, m \neq k} q_{lm} + s_j \sum_{m \neq k} q_{jm}
            &=& i \cdot \Im \left[ \frac{1}{iw} \right],
    \end{array} \right.
    \]
\item {\bf Case (d)}: if $k \leq p < j$
    \[ \left\{ \begin{array}{rcl}
        q_{jk} &=& 0 \\
        \sum_{l \neq j} q_{lk} &=& \Im \left[ \frac{1}{w} \right] \\
        \sum_{m \neq k} q_{jm} &=& 0\\
        \sum_{l \neq j, m \neq k} q_{lm} + \overline{s_k}
        \sum_{l \neq j} q_{lk} &=& -i \cdot \Im \left[ \frac{1}{iw}
            \right].
    \end{array} \right.
    \]
\end{itemize}
\end{lem}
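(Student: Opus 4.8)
The plan is to verify the claimed equation directly, by substituting the explicit forms of the generalized eigenfunctions into both sides and matching coefficients of a suitable basis of product functions in $(x,x')$. Fix $\lambda\in(a_p,a_{p+1})$ and recall, as in the proof of Lemma~\ref{l-calc.im}, that $\xi_j(\lambda)\in\R$ exactly when $j\leq p$ (the propagating branches), while $\xi_j=-i\xi_j'$ with $\xi_j'>0$ when $j>p$ (the tunnel branches). Accordingly, on $N_j$ the eigenfunctions restrict either to the trigonometric pair $\cos(\xi_j x),\sin(\xi_j x)$ or to the real exponential pair $e^{\pm\xi_j'x}$, and symmetrically in the variable $x'$ on $N_k$. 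The left-hand side is already written out in these terms in Lemma~\ref{l-calc.im}, so the task is to expand the right-hand side in the same language.

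The structural observation that collapses the double sum is that $\bigl(F_\lambda^{-,l}\bigr)_j(x)$ takes only two values: $A_j(x):=\bigl(F_\lambda^{-,j}\bigr)_j(x)$ when $l=j$, and the common value $D_j(x):=e^{-i\xi_j x}$ for every $l\neq j$; likewise $\bigl(\overline{F_\lambda^{-,m}}\bigr)_k(x')$ equals $B_k(x'):=\bigl(\overline{F_\lambda^{-,k}}\bigr)_k(x')$ when $m=k$ and the common value $C_k(x'):=\overline{e^{-i\xi_k x'}}$ when $m\neq k$. Hence the right-hand side regroups as
\[ q_{jk}\,A_j B_k + \Bigl(\sum_{m\neq k}q_{jm}\Bigr)A_j C_k + \Bigl(\sum_{l\neq j}q_{lk}\Bigr)D_j B_k + \Bigl(\sum_{l\neq j,\,m\neq k}q_{lm}\Bigr)D_j C_k, \]
so both sides lie in the four-dimensional space spanned by the products of the two $x$-building blocks with the two $x'$-building blocks. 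These four products are linearly independent on $N_j\times N_k$: indeed $A_j$ and $D_j$ are independent because $1-s_j=iw/(c_j\xi_j)\neq 0$ (as $w\neq 0$), and the same holds for $B_k,C_k$; equivalently one may pass to the manifestly independent basis $\{\cos(\xi_j x),\sin(\xi_j x)\}$ or $\{e^{\pm\xi_j'x}\}$. Thus the identity holds for all $(x,x')$ if and only if the four coefficients agree.

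It then remains to insert the value of the left-hand side from Lemma~\ref{l-calc.im} together with the expansions of $A_j,D_j,B_k,C_k$ coming from Definition~\ref{gen.eig}, and to read off the four coefficients in each case. Matching the coefficients of $\cos(\xi_j x)\cos(\xi_k x')$, $\sin(\xi_j x)\sin(\xi_k x')$ and the two mixed products (propagating directions), or of $e^{\pm\xi_j'x}e^{\pm\xi_k'x'}$ (tunnel directions), produces exactly the four equations listed in Cases (a)--(d). The translation between the trigonometric coefficients and the quantities $\Im(1/(iw))$ uses only $1/(iw)=-i/w$, whence $\Im(1/(iw))=-\Re(1/w)$.

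I expect the bookkeeping, rather than any single estimate, to be the main obstacle, since the building blocks --- and hence the dictionary between the four coefficients and the partial sums $q_{jk}$, $\sum_{l\neq j}q_{lk}$, $\sum_{m\neq k}q_{jm}$, $\sum_{l\neq j,\,m\neq k}q_{lm}$ --- change according to whether each of $j,k$ is $\leq p$ or $>p$, and the diagonal blocks $A_j,B_k$ carry the extra factors $s_j,\overline{s_k}$. The genuinely coupled situation is Case (b) with $j\neq k$, where all four trigonometric products occur and the coefficients mix $1,s_j,\overline{s_k},s_j\overline{s_k}$. A second delicate point is the tunnel diagonal in Case (a) with $j=k$: naively $A_j(x')$ contributes a growing exponential $e^{+\xi_j'x'}$, but $(s_j-1)/w=-i/(c_j\xi_j)$ is real because $\xi_j$ is purely imaginary, so that $\Im\bigl((s_j-1)/w\bigr)=0$ forces that term to cancel and keeps Case (a) uniform in $j$ and $k$.
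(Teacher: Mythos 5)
Your proposal is correct and follows essentially the same route as the paper: both expand the right-hand side, regrouped into the four blocks $q_{jk}$, $\sum_{l \neq j} q_{lk}$, $\sum_{m \neq k} q_{jm}$, $\sum_{l \neq j, m \neq k} q_{lm}$, inside the four-dimensional space of products of the trigonometric/exponential building blocks on $N_j \times N_k$, and then match coefficients against the expressions of Lemma~\ref{l-calc.im} using the linear independence of exactly the families the paper lists in each case. Your explicit verification that $1 - s_j = iw/(c_j \xi_j) \neq 0$ (guaranteeing the independence of $A_j$ and $D_j$) and your remark on the cancellation of the growing exponential on the tunnel diagonal are correct refinements of points the paper leaves implicit.
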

%
\begin{proof}
The sum $\sum_{l,m = 1}^n q_{lm}(\lambda) \bigl( F_\lambda^{-,l}\bigr)_j (x) \bigl( \overline{F_\lambda^{-,m}} \bigr)_k (x')$
is explicitely written in the different cases (a), (b), (c) and (d) as it was done in Lemma~\ref{l-calc.im}. Then the linear
independence of the following families of functions is used to get the above systems for the $q_{lm}$'s:

\begin{itemize}
\item {\bf Case (a)}: $e^{A x + B x'}$, $e^{A x - B x'}$, $e^{-A x + B x'}$,
    $e^{-A x - B x'}$,

\medskip

\item {\bf Case (b)}: $\cos(A x) \cos(B x')$, $\cos(A x) \sin(B x')$,
    $\sin(A x) \cos(B x')$, $\sin(A x) \sin(B x')$,

\medskip

\item {\bf Cases (c) and (d)}: $e^{A x} \cos(B x')$, $e^{A x} \sin(B x')$,
    $e^{-A x} \cos(B x')$, $e^{-A x} \sin(B x')$,
\end{itemize}
where $A$ and $B$ are any fixed non-vanishing real numbers.

On the other hand: if the $q_{lm}$ satisfy the system indicated in the lemma, on both sides of the equation of the lemma we have the same linear combination
of the functions given above. Thus equality holds.
\end{proof}

Having the linear systems in the above lemma at hand, it remains to
solve them. Indeed this is possible and we briefly indicate the
necessary steps.

Firstly, if $\lambda < a_1$, we have $\xi_j \in i \R$ for all $j \in
\{ 1, \ldots, n \}$. Thus, for all $j,k \in \{ 1, \ldots, n \}$ and
all $(x,x') \in N_j \times N_k$
\[ \Im \Bigl[ \frac{1}{w} \bigl( F_\lambda^{-,j+1} \bigr)_j (x) \bigl(
    F_\lambda^{-,j} \bigr)_k (x') \Bigr] = \Im \Bigl( \frac{1}{w} \Bigr)
    e^{-\xi'_j x - \xi'_k x'} = 0.
\]

Now, let $\lambda$ be fixed in $(a_p,a_{p+1})$, with $p \in \{ 1, \cdots, n \}$, remembering the convention $a_{n+1}= + \infty$. Due to the first
equation of the corresponding system of cases (a), (c) and (d), the matrix $q
:= (q_{lm})_{l,m=1}^n$ has to be of the form
\[ q = \left( \begin{array}{cc}
    Q_p & \; \vline \; 0\\
    \hline
    0 & \; \vline \; 0
    \end{array} \right)
\]
with a $p \times p$ matrix $Q_p$.

Hence, the equations $\sum_{l \neq j} q_{lk} = 0$ and $\sum_{m \neq k}
q_{jm} = 0$ in case (a) are obviously fulfilled, as well as $\sum_{l \neq j}
q_{lk} = 0$ in case (c) and $\sum_{m \neq k} q_{jm} = 0$ in case (d). This
means that only three equations remain from the cases (a), (c) and (d):
\[ \left \{ \begin{array}{rcl}
    \sum_{l,m} q_{lm} &=& \Im \left[ \frac{1}{w} \right] \\
    \sum_{m \neq k} q_{lm} + \overline{s_k} \sum q_{lk} &=& - i \cdot \Im
        \left[ \frac{1}{iw} \right] \\
    \sum_{l \neq j} q_{lm} + s_j \sum q_{jm} &=& i \cdot \Im \left[
        \frac{1}{iw} \right],
    \end{array} \right.
\]
where in all the sums, $l$ and $m$ belong to $\{ 1, \ldots, p \}$.
Now it is important to note that these three equations are already
contained in the following system corresponding to the case (b),
i.e. the only conditions for the $q_{lm}$'s are the four following
equations for a fixed $(j,k)$ such that $j < k \leq p$ or $k < j
\leq p$:
\begin{equation} \label{e-DemQ:1}
  \left\{ \begin{array}{llllcl}
    &&&\sum_{l,m} q_{lm} &=& \Im \left[ \frac{1}{w} \right]  \\
    \sum_{l \neq j, m \neq k} q_{lm} &+ \overline{s_k} \sum_{l \neq j}
        q_{lk} &+ s_j \sum_{m \neq k} q_{jm} &+ s_j \cdot
        \overline{s_k} \cdot q_{jk} &=& - \Im \left[ \frac{1}{w}
        \right] \\
    \sum_{l \neq j, m \neq k} q_{lm} &+ \overline{s_k} \sum_{l \neq j}
        q_{lk} &+ \sum_{m \neq k} q_{jm} &+ \overline{s_k} \cdot q_{jk}
        &=& - i \cdot \Im \left[ \frac{1}{iw} \right] \\
    \sum_{l \neq j, m \neq k} q_{lm} &+ \sum_{l \neq j} q_{lk} &+ s_j
        \sum_{m \neq k} q_{jm} &+ s_j \cdot q_{jk} &=& i \cdot \Im
        \left[ \frac{1}{iw} \right]
    \end{array} \right.
\end{equation}
and, for $j = k \leq p$:
\begin{equation} \label{e-DemQ:2}
  \left \{ \begin{array}{llllcl}
    &&&\sum_{l,m} q_{lm} &=& \Im \left[ \frac{1}{w} \right]  \\
    \sum_{l \neq j, m \neq j} q_{lm} &+ \overline{s_j} \sum_{l \neq j}
        q_{lj} &+ s_j \sum_{m \neq j} q_{jm} &+ s_j \cdot
        \overline{s_j} \cdot q_{jj} &=& - \Im \left[ \frac{s_j}{w}
        \right] \\
    \sum_{l \neq j, m \neq j} q_{lm} &+ \overline{s_j} \sum_{l \neq j}
        q_{lj} &+ \sum_{m \neq j} q_{jm} &+ \overline{s_j} \cdot q_{jj}
        &=& - i \cdot \Im \left[ \frac{1}{iw} \right]  \\
    \sum_{l \neq j, m \neq j} q_{lm} &+ \sum_{l \neq j} q_{lj} &+ s_j
    \sum_{m \neq j} q_{jm} &+ s_j \cdot q_{jj} &=& i \cdot \Im \left[
    \frac{1}{iw} \right],
    \end{array} \right.
\end{equation}
where, once more, $l$ and $m$ belong to $\{ 1, \ldots, p \}$ in all
the sums.

Now, we denote by $Q_{jk} := \sum_{l \neq j} q_{lk}$ and $Q'_{jk}:=
\sum_{m \neq k} q_{jm}$. Using the fact that $\Im \left[ \frac{1}{w} \right] =
\Re \left[ \frac{1}{iw} \right]$, the above system \eqref{e-DemQ:1} is
\[ \left \{ \begin{array}{lrrcl}
    &\sum_{l \neq j, m \neq k} q_{lm} &&=& \Im \left[ \frac{1}{w} \right] -
        Q_{jk} - Q'_{jk} - q_{jk} \\
    (\overline{s_k} - 1) Q_{jk} + & (s_j - 1) Q'_{jk} +&
        (s_j \overline{s_k} - 1) q_{jk} &=& - 2 \Re \left[
        \frac{1}{iw} \right]  \\
    (\overline{s_k} - 1) Q_{jk} & +& (\overline{s_k} - 1)
        q_{jk} &=& - i \Im \left[ \frac{1}{iw} \right] - \Re \left[
        \frac{1}{iw} \right] = - \frac{1}{iw} \\
    & (s_j - 1) Q'_{jk} + & (s_j - 1) q_{jk} &=& i \Im \left[ \frac{1}{iw}
        \right] - \Re \left[ \frac{1}{iw} \right] =
        \frac{1}{i \overline{w}}.
\end{array} \right.
\]
Since $s_j - 1 = - \frac{i w}{c_j \xi_j}$, the last three equations may be
rewritten as
\[ \left \{ \begin{array}{rrrcl}
    (-i c_j \xi_j \overline{w}) Q_{jk} + & (i c_k \overline{\xi_k} w) Q'_{jk}
    + & (-i c_j \xi_j \overline{w} + i c_k \overline{\xi_k} w - |w|^2)
    q_{jk} &=& (2 c_j c_k \xi_j \overline{\xi_k}) \Re \left[ \frac{1}{iw}
    \right] \\
    Q_{jk} + && q_{jk} &=& \frac{c_k \overline{\xi_k}}{|w|^2} \\
    & Q'_{jk} + & q_{jk} &=& \frac{c_j \xi_j}{|w|^2} \\
\end{array} \right.
\]
and the Gauss method yields that $q_{jk}$ must vanish for $j \neq k$.

In the case $j=k$, we rewrite system \eqref{e-DemQ:2} as above and the three
equations to be solved turn out to be
\[ \left \{ \begin{array}{rrrcl}
    (-i c_j \xi_j \overline{w}) Q_{jj} + & (i c_j \overline{\xi_j} w) Q'_{jj}
    + & (-i c_j \xi_j \overline{w} + i c_j \overline{\xi_j} w - |w|^2)
        q_{jj} &=& (2 c_j^2 \xi_j \overline{\xi_j}) \left(\Re \left[
        \frac{1}{iw} \right] - \frac{1}{c_j\xi_j} \right)  \\
    Q_{jj} + &&  q_{jj} &=& \frac{c_j \overline{\xi_j}}{|w|^2} \\
    & Q'_{jj} + & q_{jj} &=& \frac{c_j \xi_j}{|w|^2} \\
\end{array} \right.
\]
The Gauss method once more gives the only possible solution $q_{jj}=
\frac{c_j \xi_j}{|w|^2}$ for any $j \in \{ 1, \cdots, p \}$.

With this candidate for a solution at hand, it is pure calculation to show
that it is indeed a solution. Thus we have shown the following result.
%
\begin{theo} \label{maintheo}
Let $A$ be defined as in Section~\ref{sec2} and the generalized
eigenfunctions $F_\lambda^{-,j}$ be given by
Definition~\ref{gen.eig}. Take $f \in H = \prod_{j=1}^n L^2(N_j)$,
vanishing almost everywhere outside a compact set $B \subset N$, and
let $- \infty < a < b < + \infty$. Then for all $x \in N$
\[ \left( E(a,b) f \right) (x) = \int_a^b \sum_{l = 1}^n q_l(\lambda)
    F_\lambda^{-,l}(x) \int_N
    \overline{F_\lambda^{-,l}}(x')f(x') \; d x' \; d \lambda,
\]
where
\[ q_l(\lambda) := \begin{cases}
            0, & \text{if } \lambda < a_l, \\
                \frac{c_l(\lambda) \xi_l(\lambda)}{|w(\lambda)|^2}, &
        \text{if } a_l <
            \lambda.
                   \end{cases}
\]
Furthermore, for almost all $\lambda \in \R$ the matrix $q_{l,m}
(\lambda) = \delta_{lm} {\mathbf{1}}_{(a_l, +\infty )}(\lambda)
c_l(\lambda) \xi_l(\lambda)/|w(\lambda)|^2$ is the unique matrix satisfying
\eqref{specrepre}.
\end{theo}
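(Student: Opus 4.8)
The plan is to reduce the theorem to the linear algebra already prepared in Lemmas~\ref{lem-Leb} and \ref{l-syst.qlm}, to solve the resulting systems, and to verify the candidate solution. By Lemma~\ref{lem-Leb}, the integral identity \eqref{specrepre} for a given matrix $q$ is equivalent to the pointwise identity \eqref{e-LebLem} for all $x \in N_j$, almost all $x' \in N$ and all $\lambda \ge a_1$. By Lemma~\ref{l-syst.qlm}, after restricting $x'$ to each branch $N_k$ and invoking the stated linear independence of the exponential and trigonometric families, this is in turn equivalent to $q$ solving, for every pair $(j,k)$, the linear system attached to the applicable case (a)--(d). Hence it suffices to show that the diagonal matrix $q_{lm} = \delta_{lm}\,\mathbf{1}_{(a_l,+\infty)}(\lambda)\, c_l\xi_l/|w|^2$ is the unique matrix solving all of these systems simultaneously.

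First I would dispose of the range $\lambda < a_1$: there every $\xi_j$ is purely imaginary, so case (a) applies for all $(j,k)$ and the system forces $q \equiv 0$, in agreement with $q_l(\lambda)=0$. Next, for fixed $\lambda \in (a_p,a_{p+1})$, I would exploit the equation $q_{jk}=0$ that appears as the first line of cases (a), (c) and (d): these cover exactly the pairs with $j>p$ or $k>p$, forcing $q_{lm}=0$ whenever $\max\{l,m\}>p$. This block structure matches $q_l=0$ for $a_l>\lambda$. I would then observe that, once this structure is imposed, the remaining equations of cases (a), (c) and (d) are already subsumed by the case~(b) systems, so only the latter need to be solved.

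The computational core is the solution of the case~(b) systems. Setting $Q_{jk} := \sum_{l\neq j} q_{lk}$ and $Q'_{jk} := \sum_{m\neq k} q_{jm}$, using $\Im[1/w]=\Re[1/(iw)]$ to recombine the right-hand sides, and substituting $s_j-1 = -iw/(c_j\xi_j)$, each of \eqref{e-DemQ:1} and \eqref{e-DemQ:2} collapses to a $3\times 3$ linear system in $(Q_{jk},Q'_{jk},q_{jk})$. Gaussian elimination in the off-diagonal case $j\neq k$ yields $q_{jk}=0$, and in the diagonal case $j=k$ yields $q_{jj}=c_j\xi_j/|w|^2$; since these are the only possible values, the uniqueness assertion follows immediately. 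Conversely, substituting the diagonal candidate back into every equation of every case and using the definition $w=\pm i\sum_j c_j\xi_j$ confirms that it is genuinely a solution, which establishes existence and therefore the displayed formula for $E(a,b)f$.

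The main obstacle I anticipate is this final sufficiency check. One must confirm that a single diagonal matrix, whose entries were pinned down by examining one index pair at a time, simultaneously satisfies the entire collection of equations over all pairs $(j,k)$ and all four cases. The potential danger is that the systems attached to different pairs could be mutually incompatible; the resolution is that, under the established block structure and the algebraic identities above, the off-diagonal equations reduce to identities that a diagonal $q$ satisfies automatically, so no inconsistency can arise. The individual verifications are routine algebra, but it is their exhaustiveness across all cases that makes the argument complete.
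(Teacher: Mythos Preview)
Your proposal is correct and follows essentially the same route as the paper: reduction via Lemmas~\ref{lem-Leb} and~\ref{l-syst.qlm}, the block structure forced by the first line of cases (a), (c), (d), the observation that the surviving equations are subsumed by case~(b), the substitutions $Q_{jk},Q'_{jk}$ together with $s_j-1=-iw/(c_j\xi_j)$ and $\Im[1/w]=\Re[1/(iw)]$, Gaussian elimination giving $q_{jk}=0$ for $j\neq k$ and $q_{jj}=c_j\xi_j/|w|^2$, and a final direct verification. Even your identification of the sufficiency check as the residual obstacle mirrors the paper's closing remark that ``it is pure calculation to show that it is indeed a solution.''
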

%
%
%
%
%
\section{A direct approach} \label{dir.app}
%
%
%
%
We have seen in the preceding section that a matrix $q(\lambda)$
satisfying \eqref{specrepre} exists and is unique up to a null set.
It is the aim of this section to deduce an alternative
representation for $q(\lambda)$, involving only $n \times
n$-matrices and not an $(3 n^2 + 1) \times n^2$ system as above.
Since this approach is essentially independent of the special
setting, it should be more convenient for generalizations.

In the following, let us consider the complex-valued generalized
eigenfunctions $F^{-,k}_{\lambda}$ as functions on $N$ and not as
elements of $\prod_{j=1}^n C^{\infty}(N_{j})$. We introduce the
notation

\[ F_{\lambda}(x) =
  \left(
    \begin{array}{c}
      F_\lambda^{-,1}(x) \\
      \vdots \\
      F_\lambda^{-,n}(x) \\
    \end{array}
  \right).
\]
Denoting by $e_k = (\delta_{lk})_{l=1,\dots,n} = (0, \dots, 0, 1, 0, \dots, 0)^T$ the $k$-th unit vector in $\C^n$, we set $d_1(\lambda) := F_\lambda(0)$
and for $j=2, \dots, n$ we fix $x_j \in N_j$ and set $d_j(\lambda) :=
F_\lambda(x_j)$. Due to the form of our generalized eigenfunctions, we then
have
\begin{align*} \label{basis}
   d_1(\lambda) &= \sum_{k=1}^n e_k = (1, \dots, 1)^T, \\
   d_j(\lambda) &= \beta_j e_j + \alpha_j \sum_{k \neq j} e_k = (\alpha_j,
    \dots, \alpha_j, \beta_j, \alpha_j, \dots, \alpha_j)^T \text{ for } j
    = 2, \dots, n
\end{align*}
for suitable $\alpha_j, \beta_j \in \C$.

Using these vectors, we now define
\[D(\lambda):= \sum_{j=1}^{n}d_{j}(\lambda)e_{j}^{T}.
\]
Since $\alpha_j \neq \beta_j$, for every $j \in \{ 2, \ldots, n \}$, by construction, the matrix $D$ is invertible for any choice
of $(x_2, \ldots, x_n)$ provided that $x_j \neq 0$ for all $j \in \{ 2, \ldots, n \}$. Indeed $\det D = \prod_{j=2}^n (\beta_j - \alpha_j)$. \\
Denoting by $C(\lambda)$ the diagonal matrix with $c_{11}=i$ and $c_{jj}= i
\alpha_j$ for any $j \in \{ 2, \ldots, n \}$, we can formulate our theorem.
%
\begin{theo} \label{direct}
The matrix $q(\lambda):=(q_{lm}(\lambda))_{l,m=1,\ldots,n}$ satisfying
\eqref{specrepre} is given by
\begin{align*}
  q(\lambda) &= (D(\lambda)^{-1})^{T} \, \Im \Bigl(\dfrac{1}{w(\lambda)}\,
 \sum_{j=1}^{n}  e_j d_j(\lambda)^{T} e_{j+1} e_j^{T} D(\lambda) \Bigr)
 (\overline{D(\lambda)}^{-1})^{T} \\
&= (D(\lambda)^{-1})^{T} \, \Im \Bigl(\dfrac{-i}{w(\lambda)}\,
C(\lambda) D(\lambda) \Bigr) (\overline{D(\lambda)}^{-1})^{T}
\end{align*}
for almost all $\lambda > a_1$. As previously, $j$ has to be
understood modulo $n$, that is to say, $j+1=1$ if $j=n$.
\end{theo}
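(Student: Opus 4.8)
The strategy is to verify the two displayed formulas for $q(\lambda)$ directly, using the already-established uniqueness result in Theorem~\ref{maintheo}. Since Theorem~\ref{maintheo} guarantees that for almost all $\lambda > a_1$ there is a \emph{unique} matrix $q(\lambda) = (q_{lm}(\lambda))$ satisfying \eqref{specrepre}, namely the diagonal matrix $q_{lm}(\lambda) = \delta_{lm}\,{\mathbf 1}_{(a_l,+\infty)}(\lambda)\,c_l\xi_l/|w|^2$, it suffices to show that the right-hand side of the claimed formula reproduces exactly this matrix. I would proceed in two stages: first establish the equality of the two lines in the statement (which is just the identity $\sum_j e_j d_j^T e_{j+1} e_j^T = -iC D$ read through $d_j^T e_{j+1}$), and then show that the second, more compact expression evaluates to the known diagonal $q$.

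\textbf{First step: identifying the bracketed matrix.} I would compute $\sum_{j=1}^n e_j d_j(\lambda)^T e_{j+1} e_j^T D(\lambda)$ entrywise. The factor $d_j^T e_{j+1}$ is the scalar $(j{+}1)$-st component of $d_j$; by Definition~\ref{gen.eig} and the explicit form $d_j = \beta_j e_j + \alpha_j\sum_{k\neq j}e_k$, this component equals $\alpha_j$ for $j\geq 2$ (since $j+1\neq j$), while for $d_1=(1,\dots,1)^T$ it is $1$. Hence $e_j\,(d_j^T e_{j+1})\,e_j^T = (\text{scalar})\,e_j e_j^T$ is a diagonal contribution, and summing gives precisely $\mathrm{diag}(1,\alpha_2,\dots,\alpha_n) = -iC(\lambda)$. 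Multiplying on the right by $D(\lambda)$ then yields the identity between the two bracketed expressions, which justifies the second line.

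\textbf{Second step: evaluating the conjugation.} With the second form in hand, I would insert the definition $D = \sum_j d_j e_j^T$ so that $e_j^T D = d_j^T$, and compute $C D = \sum_j (i\,[\,d_j\,]_j\text{-weight})\,$ acting appropriately; more directly, I would pass to the basis in which $q$ is diagonal. The vectors $d_1 = F_\lambda(0)$ and $d_j = F_\lambda(x_j)$ are, by construction, the columns of $F_\lambda$ evaluated at the chosen sample points, and $D^{-1}$ changes basis from the standard $e_k$ to these evaluation vectors. The key algebraic point is that the map $\Im\bigl(\tfrac{-i}{w}CD\bigr)$, once conjugated by $(D^{-1})^T$ and $(\overline D^{-1})^T$, collapses to the diagonal matrix with entries $\Im(\alpha_j/(iw)) = \Re(\alpha_j/w)$-type terms, which must be matched against $c_l\xi_l/|w|^2$. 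Here I would use the explicit relation $\alpha_j = \exp(i\xi_j x_j)$ together with the Wronskian identity from Definition~\ref{def.K} and the fact $\Im[1/w] = \Re[1/(iw)]$ (already exploited in Section~\ref{spe.rep}) to reduce everything to the known value $q_{ll} = c_l\xi_l/|w|^2$.

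\textbf{Main obstacle.} The delicate part is the second step: verifying that the double conjugation by $(D^{-1})^T$ and $(\overline{D}^{-1})^T$ genuinely produces the \emph{diagonal} matrix of Theorem~\ref{maintheo}, and in particular that the off-diagonal entries cancel. The cleanest route is not to multiply matrices blindly but to invoke uniqueness: I would check that the proposed $q(\lambda)$ satisfies \eqref{specrepre} by substituting it into the identity \eqref{e-LebLem} of Lemma~\ref{lem-Leb}, rewriting both sides as $\Im$ of a rank-structured expression. Concretely, $\sum_{l,m}q_{lm}F_\lambda^{-,l}(x)\overline{F_\lambda^{-,m}}(x')$ becomes $F_\lambda(x)^T q(\lambda)\,\overline{F_\lambda(x')}$, and the claimed formula makes this equal to $\Im\bigl[\tfrac{1}{w}(F_\lambda(x)^T D^{-T})(-iCD)(\overline D^{-T}\,\overline{F_\lambda(x')})\bigr]$. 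Because $D^{-T}F_\lambda(x)$ selects coordinates against the evaluation basis, this telescopes to $\Im[\tfrac{1}{w}F_\lambda^{-,j+1}(x)F_\lambda^{-,j}(x')]$, matching the left-hand side of \eqref{e-LebLem}. The main effort, and the step most likely to hide sign or index-shift errors, is tracking the cyclic index $j+1 \pmod n$ through the factor $d_j^T e_{j+1}$ and confirming it aligns with the cyclic structure of the kernel $K$; once that bookkeeping is correct, uniqueness from Theorem~\ref{maintheo} finishes the argument.
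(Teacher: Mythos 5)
Your first step is correct and is exactly the identity the paper also uses: $d_j^{T}e_{j+1}$ is the $(j{+}1)$-st component of $d_j$, which equals $1$ for $j=1$ and $\alpha_j$ for $j\ge 2$, so $\sum_{j}e_jd_j^{T}e_{j+1}e_j^{T}=\mathrm{diag}(1,\alpha_2,\dots,\alpha_n)=-iC(\lambda)$, and the two displayed lines agree. The overall strategy of appealing to the uniqueness in Theorem~\ref{maintheo} is also legitimate in principle. The problem is that the decisive step is never actually carried out, and the one concrete argument you sketch for it is flawed. The claimed ``telescoping''
\[
F_\lambda(x)^{T}(D^{-1})^{T}\,\Im\Bigl(\tfrac{-i}{w}CD\Bigr)\,(\overline D^{-1})^{T}\,\overline{F_\lambda(x')}
=\Im\Bigl[\tfrac{1}{w}F_\lambda^{-,j+1}(x)F_\lambda^{-,j}(x')\Bigr]
\]
does not follow from ``$D^{-1}F_\lambda(x)$ selects coordinates against the evaluation basis'': for a general $x\in N_j$ the vector $D^{-1}F_\lambda(x)$ is a nontrivial combination of $e_1$ and $e_j$ (it is a coordinate vector only at the sample points $0,x_2,\dots,x_n$), and, more seriously, the entrywise $\Im$ applied to the matrix cannot be pulled out past the complex row and column vectors $F_\lambda(x)^{T}(D^{-1})^{T}$ and $(\overline D^{-1})^{T}\overline{F_\lambda(x')}$, so the left-hand side is not the imaginary part of any single scalar expression. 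Your alternative route --- conjugating explicitly and matching the result against the diagonal matrix of Theorem~\ref{maintheo} --- is only announced, not performed, and would require verifying the cancellation of all off-diagonal entries.

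The idea that makes the proof short, and which is missing from your proposal, is to run the argument in the opposite direction: since the (unique) $q$ of Theorem~\ref{maintheo} satisfies \eqref{e-LebLem} by Lemma~\ref{lem-Leb}, one evaluates that identity only at the sample points $x=x_j\in N_j$, $x'=x_k$, where $F_\lambda(x_j)=d_j$ exactly. This yields the $n^2$ scalar equations $\Im\bigl(\tfrac{1}{w}d_j^{T}e_{j+1}e_j^{T}d_k\bigr)=d_j^{T}q\,\overline{d_k}$, which assemble --- the entrywise $\Im$ being now harmless, since each entry is a genuine scalar identity --- into $\Im\bigl(\tfrac{1}{w}\sum_{j}e_jd_j^{T}e_{j+1}e_j^{T}D\bigr)=D^{T}q\,\overline D$; inverting $D$ gives the stated formula. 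In other words, the necessary form of $q$ is read off from finitely many point evaluations, and no verification for general $(x,x')$ is needed. If you insist on your ``sufficiency plus uniqueness'' framing, observe that checking the claimed formula against \eqref{specrepre} reduces to exactly these same $n^2$ identities, so nothing is gained; as written, your proposal has a genuine gap at its central step.
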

%
\begin{proof}
By Lemma~\ref{lem-Leb} the function $q$ satisfies \eqref{e-LebLem}. Using the
matrices and vectors introduced above, for fixed $j \in \{1,\ldots,n\}$ this
can be rewritten as
\[
\Im  \Bigl(\dfrac{1}{w(\lambda)}\, F_{\lambda} (x)^{T} e_{j+1}
e_j^{T} F_{\lambda} (x') \Bigr)
\ = \
F_{\lambda} (x)^{T} \ q(\lambda) \ \overline{F_{\lambda} }(x')
\]
for $x \in N_j$ and $x' \in N$.

Setting $x = x_j$ and $x' = x_k$ we obtain by the definition of
$d_j$
\[ \Im  \Bigl(\dfrac{1}{w(\lambda)}\, d_j (\lambda)^{T} e_{j+1}
    e_j^{T} d_k(\lambda) \Bigr) \ = \ d_j(\lambda)^{T} \, q(\lambda) \,
    \overline{d_k(\lambda)}
\]
for all $j,k \in \{1,\ldots,n\}$ and thus
\[\sum_{k=1}^{n} \sum_{j=1}^{n} e_j  \Im  \Bigl(\dfrac{1}{w(\lambda)}\, d_j (\lambda)^{T} e_{j+1}
e_j^{T} d_k(\lambda) \Bigr) e_k^T \ = \
\sum_{k=1}^{n}\sum_{j=1}^{n} e_j d_j(\lambda)^{T} \, q(\lambda) \, \overline{d_k(\lambda)} e_k^T.
\]
Using $\sum_{k=1}^{n}d_k e_k^T = D, $ we obtain
\[ \Im  \Bigl(\dfrac{1}{w(\lambda)}\, \sum_{j=1}^{n}  e_j d_j(\lambda)^{T} e_{j+1} e_j^{T}
 D \Bigr) \ = \ D^T \, q \, \overline{D}.
\]
Since $D$ is invertible and the relation $\sum_{j=1}^{n}  e_j d_j(\lambda)^{T}
e_{j+1} e_j^{T}= -i C(\lambda)$ holds true, we now get the desired formula for
$q$.
\end{proof}
%
%
%
%
\section{A Plancherel-type Theorem}\label{pla.the}
%
%
%
%
In this section we prove a Plancherel type theorem for a Fourier
type transformation $V$ and its inverse $V^{-1}$ associated with the
generalized eigenfunctions  $(F_{\lambda}^{-,k})_{k = 1, \ldots, n}$
introduced in Definition~\ref{gen.eig}. Due to the fact that $V$ and
$V^{-1}$ turn out to be not analogous, the surjectivity of $V$ will
not follow from its injectivity and needs a separate proof.

Furthermore, we show that the fact that a function $u$ belongs to
the space $D(A^k)$ can be formulated in terms of the decay rate of
$Vu$. These results should be useful for the resolution of evolution
problems involving the spatial operator $A$ as well as for the
analysis of the properties of a solution. For this part, we follow
Section~4 of \cite{fam2}.
%
\begin{defi} \label{Lsigma2}
Let $q_k$ be the coefficients defined in Theorem~\ref{maintheo}. Considering
for every $k = 1, \dots, n$ the weighted $L^2$ space $L^2((a_k, +\infty),
q_k)$, we set $L^2_q := \prod_{k=1}^n L^2((a_k, + \infty),q_k)$. The
corresponding scalar product is denoted by
\[ (F, G)_q := \sum_{k=1}^n \int_{(a_k, +\infty)} q_k(\lambda) F_k(\lambda)
    \overline{G_k(\lambda)} \; d \lambda
\]
and the norm by $|F|_q := (F, F)_q^{1/2}$.
\end{defi}
\noindent
Note that $L_q^2$ is a Hilbert space, since it is the product of the Hilbert spaces $L^2((a_k, + \infty),q_k)$.

Now, we define the transformation $V$, together with its inverse $Z$, which is
later on shown to diagonalize $A$.
%
\begin{defi} \label{defV}
\begin{enumerate}
\item For all $f \in L^1(N, \C)$ and $k = 1, \ldots, n$ the function $(Vf)_k
    : [a_k , + \infty) \to \C$ is defined by
    \[ (Vf)_k(\lambda):= \int_N f(x) \overline{(F_{\lambda}^{-,k})(x)}
        \; dx.
    \]
\item We choose a cut-off function $\chi \in C^{\infty}(\R)$ such that $\chi    \equiv 0$ on $(- \infty, a_n + 1)$ and $\chi \equiv 1$ on $(a_n + 2 ,   + \infty)$. Then we set $X$ to be the space of all $(G_1, \dots, G_n )
    \in L^2_q$, such that $G_k \in C^{\infty}([a_k , + \infty), \C)$ and
    $\chi G_k$ can be extended by zero to an element of $\mathcal{S}(\R)$
    for all $k \in \{ 1, \ldots, n \}$.

    Now, for every $(G_1, \dots, G_n) \in X$ we define $Z(G_1, \ldots,
    G_n) : N \to \C$ by
    \[ Z(G_1, \ldots, G_n)(x):= \sum_{k=1}^n \int_{(a_k , + \infty )}
            q_k(\lambda) G_k(\lambda) (F_{\lambda}^{-,k})(x) \; d \lambda.
    \]
\end{enumerate}
\end{defi}
\begin{lem} \label{lemPlancherel} Let $f \in L^2(N)$ and $G = (G_1, \dots, G_n)
\in X$.
Then $G \in L_q^2$, $Z(G) \in L^2(N) = H$, $Vf = ((Vf)_1, \dots, (Vf)_n) \in
L_q^2$, and
\[(G,Vf)_q = (Z(G),f)_H.
\]
\end{lem}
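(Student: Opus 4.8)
The plan is to prove the adjoint-type relation $(G, Vf)_q = (Z(G), f)_H$ by unfolding both sides into double integrals over $N \times \bigcup_k (a_k, +\infty)$ and applying Fubini's theorem to exchange the order of integration. First I would write out the left-hand side using Definition~\ref{Lsigma2} and the definition of $(Vf)_k$ from Definition~\ref{defV}, obtaining
\[ (G, Vf)_q = \sum_{k=1}^n \int_{(a_k, +\infty)} q_k(\lambda) G_k(\lambda)
    \overline{(Vf)_k(\lambda)} \; d\lambda
  = \sum_{k=1}^n \int_{(a_k, +\infty)} q_k(\lambda) G_k(\lambda)
    \overline{\int_N f(x) \overline{F_\lambda^{-,k}(x)} \; dx} \; d\lambda.
\]
Pulling the complex conjugate inside the inner integral turns $\overline{\overline{F_\lambda^{-,k}(x)}}$ into $F_\lambda^{-,k}(x)$ and produces the factor $\overline{f(x)}$.

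Next I would compute the right-hand side: by the definition of $Z$ in Definition~\ref{defV},
\[ (Z(G), f)_H = \int_N Z(G)(x) \overline{f(x)} \; dx
  = \int_N \Bigl( \sum_{k=1}^n \int_{(a_k, +\infty)} q_k(\lambda) G_k(\lambda)
    F_\lambda^{-,k}(x) \; d\lambda \Bigr) \overline{f(x)} \; dx.
\]
Comparing the two expressions, they agree provided one may interchange the $x$-integration with the $\lambda$-integration (and the finite sum over $k$). So the entire proof reduces to justifying this exchange via Fubini's theorem.

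The main obstacle is therefore the integrability needed to apply Fubini, since the $F_\lambda^{-,k}$ are generalized eigenfunctions that do not lie in $H$, and I must control the product $q_k(\lambda) G_k(\lambda) F_\lambda^{-,k}(x) \overline{f(x)}$ over the unbounded region. I would handle this by exploiting the hypotheses on $G$ and $f$: the decay of $\chi G_k$ as a Schwartz function controls the large-$\lambda$ behaviour, while on the compact frequency range below $a_n + 2$ the weight $q_k$ has at worst an $L^1_{\mathrm{loc}}$ singularity (as already noted after Theorem~\ref{lim.abs} and in the definition of $q_l$ in Theorem~\ref{maintheo}). On each branch $N_k$ the functions $F_\lambda^{-,k}$ are bounded uniformly in $x$ for real $\lambda$ (being trigonometric on $N_k$ and of modulus one off the diagonal branches), so for fixed $\lambda$ the $x$-integral against $\overline{f(x)}$ converges and is in fact controlled by $\|f\|_{L^1}$ or $\|f\|_{L^2}$ times a constant, using that $f \in L^2(N)$. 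I would establish the absolute integrability of the joint integrand by these uniform-in-$x$ bounds combined with $q_k G_k \in L^1((a_k,+\infty))$, which follows from $G \in X$ and the local integrability of $q_k$; this is precisely the content of the first assertions $G \in L^2_q$, $Z(G) \in H$ and $Vf \in L^2_q$ that accompany the statement. Once absolute integrability is in hand, Fubini applies and the two sides coincide, giving the claim.
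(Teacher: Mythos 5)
Your proposal follows essentially the same route as the paper: unfold both sides into iterated integrals and reduce the identity to an application of Fubini's theorem, justified by combining the decay of $G$ with $L^1_{\mathrm{loc}}$ control of the weight $q_k$. There is, however, one point where your domination argument is not quite right as stated. On the diagonal branch $N_k$ one has $|F_\lambda^{-,k}(x)| \le 1 + |s_k(\lambda)|$, and $s_k(\lambda) = -\sum_{l\neq k} c_l\xi_l(\lambda)/(c_k\xi_k(\lambda))$ blows up like $|\lambda-a_k|^{-1/2}$ as $\lambda \to a_k^+$; so $F_\lambda^{-,k}$ is \emph{not} bounded on $N_k$ by a constant independent of $\lambda$, and consequently $q_kG_k \in L^1$ alone does not dominate the joint integrand. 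What saves the argument --- and what the paper's proof makes explicit --- is the cancellation between the factor $\xi_k(\lambda)$ in the numerator of $q_k(\lambda) = c_k\xi_k(\lambda)/|w(\lambda)|^2$ and the $\xi_k(\lambda)$ in the denominator of $s_k$, which yields
\[
q_k(\lambda)\bigl(1+|s_k(\lambda)|\bigr) \;\le\; \frac{\sum_{l=1}^n \sqrt{c_l}\,\sqrt{|\lambda-a_l|}}{\sum_{l=1}^n c_l\,|\lambda-a_l|},
\]
an $L^1_{\mathrm{loc}}$ function of $\lambda$ uniform in $x\in N_k$ (the off-diagonal branches are harmless since there $|F_\lambda^{-,k}(x)| = |e^{-i\xi_j(\lambda)x}| \le 1$). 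You need this combined estimate on the product $q_k(1+|s_k|)\,|G_k|$, not on $q_kG_k$, before Fubini applies; with that correction your argument is complete and coincides with the paper's.
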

%
\begin{proof}
We have
\begin{align}
  (G,Vf)_q &= \sum_{k=1}^n \bigl(G_k, (Vf)_k \bigr)_{L^2((a_k, +\infty), q_k)}
    = \sum_{k=1}^n \int_{(a_k, +\infty)} q_k(\lambda) G_k(\lambda)
    \overline{(Vf)_k}(\lambda) \; d \lambda \nonumber \\
  &= \sum_{k=1}^n \int_{(a_k, +\infty)} q_k(\lambda) G_k(\lambda)
    \overline{\Bigl( \int_N f(x) \overline{(F_{\lambda}^{-,k})}(x) \; dx
    \Bigr)} \; d \lambda \nonumber \\
  &= \int_N \Bigl( \sum_{k=1}^n \int_{(a_k, +\infty)} q_k(\lambda)
    G_k(\lambda) \bigl(F_{\lambda}^{-,k} \bigr) (x) \; d \lambda \Bigr)
    \overline{f(x)} \; dx = (Z(G),f)_H. \label{fubini}
\end{align}
It only remains to make sure that the assumptions for Fubini's Theorem are satisfied to justify (\ref{fubini}). In fact, it is sufficient to estimate
$(\lambda,x) \mapsto q_k(\lambda) (F_{\lambda}^{-,k})(x)$ on $(a_k,a_{k+1})
\times N$ for a fixed $k \in \{1, \ldots, n \}$, since $f$ is compactly supported and $G_k$ is rapidly decreasing.

In order to do so, recall that $c_k \xi_k (\lambda) = \sqrt{c_k(\lambda -
a_k)}$ belongs to $\R$ if and only if $\lambda \geq a_k$ and to $i \R$
otherwise and that for any $\lambda > a_1$, we have $|w(\lambda)|^2 \ge
\sum_{l=1}^n c_l |\lambda - a_l|$ due to Lemma~\ref{w.esti}. Thus, putting in
the expression for $s_k(\lambda)$, cf. Definition~\ref{gen.eig}, we get for $x
\in N_k$
\begin{align}
  \bigl| q_k(\lambda) (F_{\lambda}^{-,k})(x) \bigr| &= \Bigl|
    \frac{c_k \xi_k (\lambda)}{|w(\lambda)|^2} \bigl( \cos(\xi_k
    (\lambda)x) - i s_k(\lambda) \sin(\xi_k (\lambda)x) \bigr) \Bigr| \le
    \frac{|c_k \xi_k (\lambda)|}{\sum_{l=1}^n c_l |\lambda - a_l|}
    (1 + |s_k(\lambda)|) \nonumber \\
  &\le \frac{|c_k \xi_k (\lambda)|}{\sum_{l=1}^n c_l |\lambda - a_l|} \cdot
    \frac{\sum_{l=1}^n c_l |\xi_l(\lambda)|}{|c_k \xi_k (\lambda)|} \le
    \frac{\sum_{l=1}^n \sqrt{c_l} \sqrt{|\lambda - a_l|}}{\sum_{l=1}^n c_l
    |\lambda - a_l|}. \label{e-est1}
\end{align}
Furthermore, for $x \in N_j$, $j > k$, we have
\begin{equation} \label{e-est2}
  |q_k(\lambda) (F_{\lambda}^{-,k})(x)| = \Bigl|
    \frac{c_k \xi_k (\lambda)}{|w(\lambda)|^2} e^{-i \xi_j(\lambda) x}
    \Bigr| \le \frac{1}{\sqrt{c_k} \sqrt{|\lambda - a_k|}} \exp \Bigl(
    -\sqrt{\frac{a_j - \lambda}{c_j}} x \Bigr).
\end{equation}
Finally, for $x \in N_j$, $j < k$,
\begin{equation} \label{e-est3}
  |q_k(\lambda) (F_{\lambda}^{-,k})(x)| = \Bigl|
    \frac{c_k \xi_k (\lambda)}{|w(\lambda)|^2} e^{-i \xi_j(\lambda) x}
    \Bigr| \le \frac{1}{\sqrt{c_k} \sqrt{|\lambda - a_k|}}.
\end{equation}
For all three cases the bound is an $L^1_{\mathrm{loc}}$ function of $\lambda$,
which is enough for the application of Fubini's Theorem, due to the properties
of $f$ and $G$.
\end{proof}
%
\begin{lem} \label{prop.Vf}
Consider $f \in \prod_{k=1}^n C_c^\infty(N_k)$. Then $((Vf)_1, \dots, (Vf)_n)
\in X$.
\end{lem}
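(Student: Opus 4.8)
The plan is to verify directly the three defining properties of the space $X$ from Definition~\ref{defV} for the tuple $((Vf)_1,\dots,(Vf)_n)$: membership in $L^2_q$, smoothness of each component $(Vf)_k$ on $[a_k,+\infty)$, and the fact that $\chi (Vf)_k$ extends by zero to a Schwartz function. First I would make the components explicit. Writing $f=(f_1,\dots,f_n)$ with $f_j\in C_c^\infty(N_j)$ and using Definition~\ref{gen.eig},
\[
  (Vf)_k(\lambda) = \int_0^\infty f_k(x)\,\overline{\bigl(\cos(\xi_k(\lambda) x) - i s_k(\lambda)\sin(\xi_k(\lambda) x)\bigr)}\,dx
  + \sum_{m\neq k}\int_0^\infty f_m(x)\,\overline{e^{-i\xi_m(\lambda) x}}\,dx .
\]
After the substitution $\zeta=\xi_j(\lambda)$, each summand is a one--sided Fourier--Laplace transform of a compactly supported smooth function, hence an entire function of $\zeta$ that is rapidly decreasing along the real $\zeta$--axis by Paley--Wiener. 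This representation is the backbone of all three verifications.

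For the decay and the $L^2_q$--membership I would argue as follows. On the cutoff region $\mathrm{supp}\,\chi\subset(a_n+1,+\infty)$ every $\xi_j(\lambda)=\sqrt{(\lambda-a_j)/c_j}$ is real, real--analytic in $\lambda$, tends to $+\infty$, and together with all its $\lambda$--derivatives is polynomially bounded. Since each branch contribution is the value at $\xi_j(\lambda)$ of a Schwartz function of the real variable $\zeta$, repeated differentiation under the integral sign together with the Fa\`a di Bruno rule shows that $\chi(Vf)_k$ and all its derivatives decay faster than any power of $\lambda$, yielding the Schwartz extension. For $L^2_q$ the weight $q_k=c_k\xi_k/|w|^2$ lies in $L^1_{\mathrm{loc}}([a_k,+\infty))$ by Lemma~\ref{w.esti}, while $(Vf)_k$ is locally bounded (the apparent blow-up of $s_k$ is tamed by the factor $\sin(\xi_k x)$ on the compact support of $f_k$, exactly as in the pointwise estimates \eqref{e-est1}--\eqref{e-est3}); combined with the rapid decay at infinity this gives $\int_{a_k}^\infty q_k\,|(Vf)_k|^2<\infty$.

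It remains to prove $(Vf)_k\in C^\infty([a_k,+\infty))$. On each open band $(a_p,a_{p+1})$ the functions $\xi_j$ are smooth (real for $j\le p$, purely imaginary for $j>p$), and since $f_j$ is compactly supported the derivatives of the integrands are dominated by $L^1(N)$--functions, so differentiation under the integral gives smoothness on each band. Two kinds of boundary points then have to be examined. The first is the left endpoint $\lambda=a_k$, where $\xi_k\to 0$ and $s_k(\lambda)=-\frac{\sum_{l\neq k}c_l\xi_l}{c_k\xi_k}\to\infty$; here the singularity is removable, because $s_k$ occurs only multiplied by $\sin(\xi_k x)$ and $\xi_k^{-1}\sin(\xi_k x)\to x$ smoothly, so the self--branch factor is an even power series in $\xi_k$ and hence a smooth function of $\xi_k^2=(\lambda-a_k)/c_k$; thus the self--branch term extends smoothly through $a_k$.

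The main obstacle is the second kind of boundary point: the interior thresholds $\lambda=a_m$ with $m>k$, where the $m$--th cross--branch factor $e^{-i\xi_m x}$ passes from oscillatory to exponentially decaying. To handle these I would split the corresponding transform into its even and odd parts, $\widehat{f_m}(\zeta)=E(\zeta^2)+\zeta\,O(\zeta^2)$ with $E,O$ entire: the even part contributes $E(\xi_m^2)=E((\lambda-a_m)/c_m)$, which is manifestly smooth, so everything reduces to the odd part $\overline{\xi_m}\,O(\xi_m^2)$. Controlling this term across $a_m$ is precisely where the branch--cut convention $\arg\in[-\pi,\pi)$ fixed in Definition~\ref{gen.eig} and the matching of all one--sided $\lambda$--derivatives must be used with care, and I expect this threshold analysis --- rather than the decay estimates --- to be the delicate heart of the proof.
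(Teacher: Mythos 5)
Your overall route is the same as the paper's: the published proof is a single sentence observing, as in Section~4 of \cite{fam2}, that each $(Vf)_k$ is a linear combination of Fourier and Laplace transforms of functions in $C_c^\infty(N_j)$, and your explicit branch-by-branch representation, the Paley--Wiener decay on $\mathrm{supp}\,\chi$, and the local estimates giving $Vf\in L^2_q$ are precisely the details that this sentence compresses. Those parts of your argument are sound, as is the removal of the apparent singularity of $s_k$ at the left endpoint $\lambda=a_k$ (at least when the $a_j$ are pairwise distinct).

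The gap is the step you yourself flag as ``the delicate heart'' and then do not carry out: smoothness of $(Vf)_k$ across an interior threshold $\lambda=a_m$ with $a_m>a_k$. This step cannot be completed in the form you envisage. Writing $\Phi_m(\zeta)=\int_0^\infty f_m(x)e^{i\zeta x}\,dx=E(\zeta^2)+\zeta\,O(\zeta^2)$ with $E,O$ entire, the branch-$m$ contribution to $(Vf)_k$ is $\Phi_m\bigl(\overline{\xi_m(\lambda)}\bigr)$, where $\overline{\xi_m(\lambda)}$ equals $\sqrt{(\lambda-a_m)/c_m}$ for $\lambda>a_m$ and $i\sqrt{(a_m-\lambda)/c_m}$ for $\lambda<a_m$; in both cases its modulus is $|\lambda-a_m|^{1/2}c_m^{-1/2}$. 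Since $O(0)=\Phi_m'(0)=i\int_0^\infty x f_m(x)\,dx$ is nonzero for generic $f_m\in C_c^\infty(N_m)$, the odd part contributes $O(0)\,\overline{\xi_m(\lambda)}+o\bigl(|\lambda-a_m|^{1/2}\bigr)$, whose first $\lambda$-derivative blows up like $|\lambda-a_m|^{-1/2}$ from both sides of $a_m$; no bookkeeping of one-sided derivatives or of the branch-cut convention removes this. A second singular term of exactly the same type is hidden inside $\overline{s_k}$ in the branch-$k$ integral (through the summand $c_m\overline{\xi_m}$), and it carries the data $f_k$ rather than $f_m$, so the two cannot cancel in general. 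What your method actually establishes is $C^\infty$ on each open band $(a_p,a_{p+1})$ together with the decay and $L^2_q$ statements --- which is what is really exploited later (the Fubini argument in Lemma~\ref{lemPlancherel} needs only rapid decrease and local integrability) --- but not literal membership $(Vf)_k\in C^\infty([a_k,+\infty))$ as Definition~\ref{defV} requires. The paper's one-line proof does not address this point either; to close your argument you would have to weaken the smoothness requirement in the definition of $X$ (for instance to band-wise smoothness) rather than refine the threshold analysis.
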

%
\begin{proof}
As in Section~4 of \cite{fam2}, $(Vf)_k$ is a linear combination
of Fourier and Laplace transforms of functions in
$C_c^\infty(N_j)$, $j \in \{ 1, \ldots, n \}$.
\end{proof}
The next step is to show that $V$ is an isometry and $Z$ is its
right inverse. The proof of the following theorem is analogous to that of
\cite[Theorem~4.9]{fam2}.
%
\begin{theo} \label{V.iso}
Endow $\prod_{k=1}^n C_c^\infty(N_k)$ with the norm of $H = \prod_{k=1}^n
L^2(N_k)$. Then
\begin{enumerate}
\item  $V : \prod_{k=1}^n C_c^\infty(N_k) \to L^2_q$ is isometric and can
    therefore be extended to an isometry $\tilde{V} : H \to L^2_q$.

    \noindent In particular, for all $f \in H$ we have $|\tilde{V}f|_q^2 =
    (f,f)_H$.
\item $Z = \tilde{V}^{-1}$ on $\tilde{V}(H)$.
\item $Z$ can be extended to a continuous operator on $L^2_q$.
\end{enumerate}
\end{theo}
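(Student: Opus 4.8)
The plan is to establish the three assertions in sequence, leaning heavily on the duality relation $(G, Vf)_q = (Z(G), f)_H$ proved in Lemma~\ref{lemPlancherel} and on the spectral representation from Theorem~\ref{maintheo}. First I would prove the isometry statement (i). The natural route is to observe that for $f \in \prod_{k=1}^n C_c^\infty(N_k)$ one has, by Theorem~\ref{maintheo} applied with $h \equiv 1$ and letting $a \to -\infty$, $b \to +\infty$,
\[
  (f, f)_H = \bigl( E(\R) f, f \bigr)_H = \int_\R \sum_{l=1}^n q_l(\lambda)
    \bigl( F_\lambda^{-,l}, f \bigr)_H \, \overline{(F_\lambda^{-,l}, f)_H}
    \; d\lambda.
\]
Recognizing that $\overline{(F_\lambda^{-,l}, f)_H} = \int_N f(x) \overline{F_\lambda^{-,l}(x)} \, dx = (Vf)_l(\lambda)$ and $(F_\lambda^{-,l}, f)_H = \overline{(Vf)_l(\lambda)}$, the right-hand side is exactly $\sum_l \int q_l |(Vf)_l|^2 = |Vf|_q^2$. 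Thus $V$ is isometric on the dense subspace $\prod_k C_c^\infty(N_k)$ of $H$ (density being standard), and the bounded extension to $\tilde V : H \to L_q^2$ follows from the BLT theorem.

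For assertion (ii), the goal is $Z(\tilde V f) = f$ for $f \in H$, equivalently that $Z$ inverts $V$ on the range $\tilde V(H)$. Here I would exploit Lemma~\ref{lemPlancherel} together with (i): for $f \in \prod_k C_c^\infty(N_k)$, Lemma~\ref{prop.Vf} guarantees $Vf \in X$, so $Z(Vf)$ is well-defined, and for any such $f$ and any $g \in \prod_k C_c^\infty(N_k)$ one computes
\[
  (Z(Vf), g)_H = (Vf, Vg)_q = (\tilde V f, \tilde V g)_q = (f, g)_H,
\]
where the middle equality is the polarized form of the isometry (i) and the first is Lemma~\ref{lemPlancherel} with $G = Vf$. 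Since $g$ ranges over a dense subspace of $H$, this forces $Z(Vf) = f$. One then passes to the limit: for general $f \in H$, approximate by $f_n \in \prod_k C_c^\infty(N_k)$, note $\tilde V f_n \to \tilde V f$ in $L_q^2$, and use the continuity of $Z$ on $L_q^2$ established in (iii) to conclude $Z(\tilde V f) = f$.

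Assertion (iii) — that $Z$ extends continuously to all of $L_q^2$ — is where I expect the main obstacle, since $X$ is only a dense subspace and $Z$ as defined involves a $\lambda$-integral whose integrand need not be absolutely convergent for rough $G$. The cleanest strategy is again duality: for $G \in X$ and $f \in \prod_k C_c^\infty(N_k)$, Lemma~\ref{lemPlancherel} gives $|(Z(G), f)_H| = |(G, Vf)_q| \le |G|_q \, |Vf|_q = |G|_q \, |f|_H$, using (i) in the last step. Taking the supremum over $f$ in the unit ball of a dense subspace of $H$ yields $|Z(G)|_H \le |G|_q$, so $Z$ is a contraction on $X$ and extends by density to a continuous operator on $L_q^2$. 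The delicate point to verify is that $X$ is genuinely dense in $L_q^2$ — this should follow from the density of Schwartz functions vanishing near the finite endpoints $a_k$, combined with the weight $q_k$ being locally integrable (which was noted after Lemma~\ref{w.esti} and in the estimates \eqref{e-est1}--\eqref{e-est3}) — and that the contraction bound is in fact sharp, so that the extension still satisfies $Z = \tilde V^{-1}$ on $\tilde V(H)$ as required for (ii) to close. I would handle the interdependence of (ii) and (iii) by proving the contraction estimate of (iii) first for $G \in X$, then using it to legitimize the limiting argument in (ii).
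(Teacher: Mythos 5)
Your proposal is correct and follows what is essentially the only natural route here: the paper itself gives no argument for Theorem~\ref{V.iso} beyond citing the analogous Theorem~4.9 of \cite{fam2}, and your three steps --- isometry on $\prod_k C_c^\infty(N_k)$ via Theorem~\ref{maintheo} with $E(a,b)\to I$, the duality bound $|(Z(G),f)_H| = |(G,Vf)_q| \le |G|_q\,|f|_H$ to extend $Z$, and polarization plus density to get $Z\tilde V = I$ --- are exactly the standard argument that reference carries out. The only points you leave implicit (the Fubini interchange needed to turn $(E(a,b)f,f)_H$ into $\int_a^b \sum_l q_l |(Vf)_l|^2\,d\lambda$, which requires an estimate on $q_l(1+|s_l|)^2$ of the same type as \eqref{e-est1}--\eqref{e-est3}, and the density of $X$ in $L^2_q$) are likewise left implicit by the paper, which invokes the latter without proof immediately after the theorem.
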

%
In the sequel we shall again write $V$ for $\tilde{V}$ for simplicity. Note
that thanks to the density of $X$ in $L^2_q$, from Lemma~\ref{lemPlancherel}
we get the Plancherel type formula
\begin{equation} \label{e-Planch}
  (G,Vf)_q = (Z(G),f)_H \quad \text{for all } f \in L^2(N) \text{ and }
    G \in L^2_q.
\end{equation}
Set
\[ Y := \bigl\{(G_1, \dots, G_n) \in L^2_q : \mathrm{supp}(G_j)
    \text{ compact and } \mathrm{supp}(G_j) \cap \{ a_1, \dots, a_n \} =
    \emptyset, \ j = 1, \dots, n \bigr\}.
\]
Then $Y$ is a dense subspace of $L^2_q$ and for our subsequent considerations
it is important to know that also for all $G \in Y$ the transformation $Z$ is
given by the formula from Definition~\ref{defV}~ii). This is the aim of the
following lemma.
%
\begin{lem} \label{lem-Y}
Let $G \in Y$. Then
\[ Z(G)(x) = \sum_{k=1}^n \int_{(a_k , + \infty )} q_k(\lambda) G_k(\lambda)    (F_{\lambda}^{-,k})(x) \; d \lambda.
\]
\end{lem}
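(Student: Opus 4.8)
The plan is to exploit that $Z$ has already been extended to a continuous operator on all of $L^2_q$ by Theorem~\ref{V.iso}~iii), and that on the dense subspace $X$ it is given by the explicit integral of Definition~\ref{defV}~ii). So I would fix $G \in Y$ and produce a sequence $G^{(i)} \in X$ with $G^{(i)} \to G$ in $L^2_q$ whose supports all lie in one fixed compact set $K \subset (a_1, +\infty) \setminus \{a_1, \dots, a_n\}$. Such a sequence is obtained by mollifying each component $G_k$: since $\mathrm{supp}(G_k)$ is compact and avoids every $a_l$, convolution with a narrow mollifier keeps the support inside a slightly enlarged compact $K$ that still avoids the $a_l$, produces a smooth compactly supported function that in particular vanishes near $a_k$, and hence lies in $X$. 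On $K$ the weight $q_k$ is bounded, because $|w|^2 \geq \sum_l c_l|\lambda - a_l| \geq \delta > 0$ there by Lemma~\ref{w.esti} while $\xi_k$ stays bounded; therefore ordinary $L^2$-convergence of the mollifications upgrades to convergence in $L^2_q$.

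Then I would run two convergences in parallel. On the one hand, continuity of $Z$ gives $Z(G^{(i)}) \to Z(G)$ in $H = L^2(N)$, so that a subsequence converges pointwise almost everywhere on $N$. On the other hand, write $\Phi(G)(x) := \sum_{k=1}^n \int_{(a_k, +\infty)} q_k(\lambda) G_k(\lambda) (F_{\lambda}^{-,k})(x) \, d\lambda$ for the explicit integral; for $G \in Y$ this is a genuine, finite integral for every $x$, since the integrand is supported in the compact $K$ on which $q_k$ and $(F_{\lambda}^{-,k})(x)$ are bounded (the uniform control being exactly what the estimates in Lemma~\ref{lemPlancherel} provide). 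Because $G^{(i)} \to G$ in $L^2(K)$, hence in $L^1(K)$, for each fixed $x$ one has $|\Phi(G^{(i)})(x) - \Phi(G)(x)| \leq \sum_{k=1}^n \bigl( \sup_{K} |q_k (F_{\lambda}^{-,k})(x)| \bigr) \, \|G_k^{(i)} - G_k\|_{L^1(K)} \to 0$, so $\Phi(G^{(i)})(x) \to \Phi(G)(x)$ for every $x \in N$. Since $\Phi(G^{(i)}) = Z(G^{(i)})$ on $X$ by Definition~\ref{defV}~ii), comparing the two limits along the almost-everywhere convergent subsequence yields $Z(G)(x) = \Phi(G)(x)$ for almost every $x$, which is precisely the asserted formula.

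The step I expect to require the most care is the identification of the two limits: $\Phi(G)$ is defined only pointwise and is not a priori an element of $L^2(N)$, whereas $Z(G)$ is produced as an abstract $L^2$-limit. The device that makes them meet is the passage to an almost-everywhere convergent subsequence of $Z(G^{(i)})$, combined with the elementary pointwise convergence $\Phi(G^{(i)}) \to \Phi(G)$; once both sequences share the same pointwise limit almost everywhere, the identity is forced, and as a byproduct $\Phi(G)$ inherits membership in $L^2(N)$. The one technical point to keep honest throughout is the uniform boundedness of $q_k$ and of the generalized eigenfunctions on the compact support $K$ away from the thresholds $a_l$, which rests on the lower bound for $|w|$ from Lemma~\ref{w.esti}.
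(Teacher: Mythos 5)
Your argument is correct, but it takes a genuinely different route from the paper. The paper proves the identity weakly: it pairs $Z(G)$ with a test function $f \in \prod_{k=1}^n C_c^\infty(N_k)$, invokes the already-extended Plancherel identity $(Z(G),f)_H = (G,Vf)_q$, writes $Vf$ out as an integral over $N$, and then interchanges the $\lambda$- and $x$-integrations by Fubini; the hypotheses of Fubini are checked using the kernel bounds \eqref{e-est1}--\eqref{e-est3} together with the fact that $G \in Y$ has compact support away from the thresholds, and density of the test functions finishes the proof. You instead argue strongly: you mollify $G$ inside $Y$ to produce $G^{(i)} \in X$ converging in $L^2_q$ with supports in a fixed compact $K$ avoiding the $a_l$, use the continuity of $Z$ from Theorem~\ref{V.iso}~iii) to get $Z(G^{(i)}) \to Z(G)$ in $H$ and hence a.e.\ along a subsequence, show the explicit integrals converge pointwise via the same kernel bounds, and identify the two limits. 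Both proofs rest on the same analytic input (the estimates of Lemma~\ref{lemPlancherel} and the lower bound on $|w|$ from Lemma~\ref{w.esti}); the paper's duality argument is shorter because it needs only one application of Fubini and no approximating sequence, while your approximation argument avoids Fubini entirely and makes visible where the continuous extension of $Z$ actually enters. One small point worth making explicit in your write-up: to pass from $G_k \in L^2((a_k,+\infty),q_k)$ to $G_k \in L^2(K)$ (and hence $L^1(K)$, which you need both for the finiteness of $\Phi(G)(x)$ and for the $L^1$-convergence of the mollifications) you use that $q_k$ is bounded \emph{below} by a positive constant on $K$, not just bounded above; this holds because $\xi_k$ is bounded away from zero and $|w|^2$ is bounded above on the compact $K \subset (a_k,+\infty)\setminus\{a_1,\dots,a_n\}$, and it is the mirror image of the upper bound you do state.
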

%
\begin{proof}
Let $f \in \prod_{k=1}^n C_c^\infty(N_k)$. Then, using \eqref{e-Planch}, we get
\[ (Z(G), f)_H = (G, Vf)_q = \sum_{k=1}^n \int_{(a_k, +\infty)} G_k(\lambda)
    \int_N F_\lambda^{-,k}(x) f(x) \; dx \ q_k(\lambda) \; d \lambda.
\]
In order to apply Fubini's Theorem, we estimate for every $k = 1, \dots, n$
\begin{align*}
    \int_N \bigl| G_k(\lambda) F_\lambda^{-,k}(x) f(x) q_k(\lambda) \bigr|
    \; dx
  & = \bigl| G_k(\lambda) q_k(\lambda) \bigr| \int_N
    |F_\lambda^{-,k}(x) \bigr| |f(x)| \; dx \\
  & \le \  |G_k(\lambda)| \sup_{x \in \mathrm{supp}(f)} \bigl|
    q_k(\lambda) F_\lambda^{-,k}(x) \bigr| \cdot \|f\|_{L^1(N)}.
  \intertext{ Investing the estimates from \eqref{e-est1} -- \eqref{e-est3}, we
    can estimate this further by}
  & \le \  C \frac{1}{\sqrt{|\lambda - a_k|}} |G_k(\lambda)|\cdot \| f\|_{L^1(N)},
\end{align*}
and since $G \in Y$ this function is in $L^1((a_k, + \infty))$, so we may apply
the Fubini Theorem and obtain
\begin{align*}
  (Z(G), f)_H &= \int_N \sum_{k=1}^n \int_{(a_k, +\infty)}  q_k(\lambda)
    G_k(\lambda) F_\lambda^{-,k}(x) \; d\lambda \ f(x) \; d x \\
  &= \Bigl( \sum_{k=1}^n \int_{(a_k, + \infty)} q_k(\lambda) G_k(\lambda)
    F_\lambda^{-,k} \; d \lambda, f \Bigr)_H.
\end{align*}
This implies the assertion.
\end{proof}
We now want to show that $V$ diagonalizes the operator, i.e. $V(A^j u)
(\lambda) = \lambda^j V(u) (\lambda)$ for all $u \in D(A^j)$. In order to
formulate this precisely for a measurable function $\psi : \R \to \R$ we
denote the corresponding multiplication operator on $L^2_q$ by $M_\psi$,
i.e. for a function $F \in L^2_q$ we have
\[ (M_\psi F)_k(\lambda) = \psi(\lambda) F_k(\lambda), \qquad \lambda \in [a_k,
    +\infty), \ k \in \{1, \dots, n\}.
\]
Then we have the following lemma, whose proof is again analogous to
that of \cite[Lemma~4.12]{fam2}.
\begin{lem} \label{mult.op}
\begin{enumerate}
\item Let $j \in \N$ and $p_j : \R \to \R$ be defined by $p_j(x) = x^j$. Then
    for any $f \in D(A^j)$ we have
    \[V (A^j f) = M_{p_j} Vf.
    \]
\item Let $\psi : [a_1,+ \infty) \rightarrow \R$ be a bounded, measurable
    function. Then $\psi(A)$ defined by the spectral Theorem is a bounded
    operator on $H$ and for all $f \in H$ we have
    \[ V (\psi (A) f) = M_{\psi} (Vf).
    \]
\end{enumerate}
\end{lem}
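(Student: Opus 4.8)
The plan is to read off the whole statement from the spectral-measure identity contained in Theorem~\ref{maintheo}, together with the fact that $V$ is an isometry (Theorem~\ref{V.iso}). First I would reformulate Theorem~\ref{maintheo} at the level of scalar measures: integrating the stated formula for $(E(a,b)f)(x)$ against $\overline{g(x)}$ and using $\int_N F_\lambda^{-,l}(x)\overline{g(x)}\,dx = \overline{(Vf)_l(\lambda)}$'s analogue $\overline{(Vg)_l(\lambda)}$, one gets for compactly supported $f,g$
\[ \bigl( E(a,b)f, g \bigr)_H = \int_{(a,b)} \sum_{l=1}^n q_l(\lambda)\,(Vf)_l(\lambda)\,\overline{(Vg)_l(\lambda)} \; d\lambda. \]
Both sides are continuous in $(f,g)\in H\times H$ — the left because $E(a,b)$ is bounded, the right because $V$ is an isometry and multiplication by $\mathbf{1}_{(a,b)}$ is bounded on $L^2_q$ — so the identity extends to all $f,g\in H$. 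Equivalently, the complex spectral measure $\sigma \mapsto (E(\sigma)f,g)_H$ is absolutely continuous with density $\rho_{f,g}(\lambda) := \sum_l q_l(\lambda)(Vf)_l(\lambda)\overline{(Vg)_l(\lambda)}$, which lies in $L^1(\R)$ by the Cauchy--Schwarz inequality in $L^2_q$.

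The device that converts a tested identity into a genuine $L^2_q$-identity is the following elementary remark: if $u\in H$ and $w\in L^2_q$ satisfy $(Vu - w, Vh)_q = 0$ for all $h\in H$ and $|w|_q = |Vu|_q$, then $w = Vu$; indeed $w - Vu \perp V(H)$ while $Vu\in V(H)$, so $|w|_q^2 = |Vu|_q^2 + |w - Vu|_q^2$ and the norm equality forces $w = Vu$. For part~i) I take $f\in D(A^j)$ and $g\in H$. By the spectral theorem and the measure identity,
\[ (A^j f, g)_H = \int_\R \lambda^j \, d(E(\lambda)f,g)_H = \int_\R \lambda^j \rho_{f,g}(\lambda)\,d\lambda = (M_{p_j}Vf, Vg)_q, \]
the middle integrand being in $L^1$ because $|M_{p_j}Vf|_q^2 = \int_\R \lambda^{2j}\sum_l q_l|(Vf)_l|^2\,d\lambda = \|A^j f\|_H^2 < \infty$. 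Since $V$ is isometric, $(A^jf,g)_H = (VA^jf, Vg)_q$, whence $(VA^jf - M_{p_j}Vf, Vg)_q = 0$ for every $g$; as the displayed norm computation gives $|VA^jf|_q = \|A^jf\|_H = |M_{p_j}Vf|_q$, the remark yields $V(A^jf) = M_{p_j}Vf$. (Alternatively, i) follows by integrating $(A^jf)\overline{F_\lambda^{-,k}}$ by parts $2j$ times on a truncation $V_{l_1,\dots,l_n}$ of the network: the boundary terms at $0$ cancel by the transmission conditions shared by $f$ and $F_\lambda^{-,k}$, those at the truncation vanish since $A\overline{F_\lambda^{-,k}} = \lambda\overline{F_\lambda^{-,k}}$, because $\overline{F_\lambda^{-,k}}=F_\lambda^{+,k}$ is a generalized eigenfunction for the real $\lambda$; one then extends from a smooth compactly supported core compatible with the transmission conditions to $D(A^j)$ by closedness of $A^j$.)

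For part~ii) the boundedness of $\psi(A)$ on $H$ is the bounded functional calculus for the self-adjoint operator $A$ (Proposition~\ref{selfadjoint}). Repeating the computation with $\psi$ in place of $p_j$, for any bounded measurable $\psi$ and all $f,g\in H$,
\[ (\psi(A)f, g)_H = \int_\R \psi(\lambda)\,d(E(\lambda)f,g)_H = \int_\R \psi(\lambda)\rho_{f,g}(\lambda)\,d\lambda = (M_\psi Vf, Vg)_q, \]
where integrability is immediate since $\psi$ is bounded and $\rho_{f,g}\in L^1$. Again $(\psi(A)f,g)_H = (V\psi(A)f, Vg)_q$ by isometry, and $|M_\psi Vf|_q^2 = \int_\R|\psi|^2\sum_l q_l|(Vf)_l|^2\,d\lambda = \|\psi(A)f\|_H^2 = |V\psi(A)f|_q^2$, so the same remark gives $V(\psi(A)f) = M_\psi Vf$.

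The only genuine subtlety — and the step I expect to cost the most care — is precisely the passage from the tested equality $(V\phi(A)f - M_\phi Vf, Vg)_q = 0$ to the pointwise equality in $L^2_q$. One could argue this through surjectivity of $V$, so that $Vg$ exhausts $L^2_q$, but surjectivity is not available at this point; the norm-matching remark above circumvents it, at the price of having to verify the Parseval-type identities $|M_\phi Vf|_q = \|\phi(A)f\|_H$, which themselves rest on the measure identity and the spectral theorem. The remaining points are routine: extending the measure identity to all $f,g\in H$ by the density/continuity argument, and checking the $L^1$-integrability needed to insert $\rho_{f,g}$ into the spectral integral, both of which follow from Cauchy--Schwarz in $L^2_q$ together with $f\in D(A^j)$ (respectively $\psi$ bounded).
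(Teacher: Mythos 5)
Your proposal is correct. Note that the paper itself gives no argument here: it simply declares the proof ``analogous to that of [fam2, Lemma~4.12]'', so there is nothing concrete to match against; what you supply is a genuine, self-contained proof. Your main route --- polarizing Theorem~\ref{maintheo} into the density statement $d(E(\lambda)f,g)_H=\sum_l q_l (Vf)_l\overline{(Vg)_l}\,d\lambda$, feeding it into the spectral calculus to get the tested identities $(V\phi(A)f-M_\phi Vf,Vg)_q=0$, and then upgrading these to identities in $L^2_q$ via the orthogonality/norm-matching remark --- is clean, and the remark is exactly the right device: it correctly sidesteps the surjectivity of $V$, which is indeed not yet available at this stage (the surjectivity proof in the paper uses Lemma~\ref{lem-mult}, which in turn uses the present lemma, so invoking it here would be circular). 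The classical proof of part~i) is presumably the direct Green's-formula computation you sketch parenthetically; your spectral-measure route has the advantage of treating i) and ii) uniformly and of handling general bounded measurable $\psi$ without any approximation argument. The one step you should write out more carefully is the derivation of the measure identity for compactly supported $f,g$: the Fubini interchange there is not justified by Cauchy--Schwarz in $L^2_q$ (which presupposes what you are proving at that point) but by the pointwise bounds \eqref{e-est1}--\eqref{e-est3} on $q_l(\lambda)F_\lambda^{-,l}(x)$ together with the growth of $(Vf)_l(\lambda)$ near $\lambda=a_l$; these show the integrand is dominated by an $L^1_{\mathrm{loc}}$ function of $\lambda$ uniformly for $x$ in compacts, which is what the interchange actually needs. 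With that adjustment the argument is complete.
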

%
In order to show injectivity of $Z$, and thus the surjectivity of
$V$, we will need the following property of the kernel.
%
\begin{lem} \label{lem-mult}
Let $G \in \mathrm{ker}(Z)$ and let $\psi : [a_1, +\infty)$ be measurable and
bounded. Then $M_\psi G \in \mathrm{ker}(Z)$.
\end{lem}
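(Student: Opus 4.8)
The plan is to reduce the statement to the Plancherel-type identity \eqref{e-Planch} together with the diagonalization property of $V$ established in Lemma~\ref{mult.op}. First I would observe that $M_\psi G \in L_q^2$, since $\psi$ is bounded and $G \in L_q^2$, so that $Z(M_\psi G)$ is well defined. The goal is then to show that $(Z(M_\psi G), f)_H = 0$ for every $f \in L^2(N)$, because $L^2(N) = H$ is dense in itself and this forces $Z(M_\psi G) = 0$, i.e.\ $M_\psi G \in \ker(Z)$.

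Applying \eqref{e-Planch} to $M_\psi G$ gives
\[ (Z(M_\psi G), f)_H = (M_\psi G, Vf)_q \]
for all $f \in L^2(N)$. The key manipulation is to move the real-valued factor $\psi$ from the first argument of the scalar product $(\cdot,\cdot)_q$ onto the second. Writing out Definition~\ref{Lsigma2} and using that $\psi(\lambda) \in \R$ survives the complex conjugation in the second slot, one obtains
\[ (M_\psi G, Vf)_q = (G, M_\psi (Vf))_q. \]
At this point Lemma~\ref{mult.op}~ii) identifies $M_\psi (Vf)$ with $V(\psi(A)f)$, where $\psi(A)$ is the bounded operator furnished by the spectral theorem, so that $\psi(A)f \in H = L^2(N)$ and \eqref{e-Planch} may be invoked again.

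Substituting and applying \eqref{e-Planch} once more, now with $\psi(A)f$ in place of $f$, yields
\[ (G, M_\psi(Vf))_q = (G, V(\psi(A)f))_q = (Z(G), \psi(A)f)_H. \]
Since $G \in \ker(Z)$ by hypothesis, the right-hand side vanishes. Chaining the three displays gives $(Z(M_\psi G), f)_H = 0$ for all $f \in L^2(N)$, whence $Z(M_\psi G) = 0$ and $M_\psi G \in \ker(Z)$, as claimed.

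I expect the only genuine subtlety to be the passage $(M_\psi G, Vf)_q = (G, M_\psi(Vf))_q$, which relies on $\psi$ being real-valued (i.e.\ on the self-adjointness of the multiplication operator $M_\psi$ on $L_q^2$); this is exactly the hypothesis that $\psi$ takes values in $\R$. Everything else is a formal rearrangement resting on the already-established facts that $V$ extends to an isometry, that $Z$ is continuous on $L_q^2$ (Theorem~\ref{V.iso}), that the Plancherel formula \eqref{e-Planch} holds for \emph{all} $f \in L^2(N)$ and $G \in L_q^2$, and that $V$ intertwines $\psi(A)$ with $M_\psi$.
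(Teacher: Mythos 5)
Your proposal is correct and follows essentially the same chain of equalities as the paper's proof: apply \eqref{e-Planch}, move the real-valued multiplier $\psi$ across the $(\cdot,\cdot)_q$ scalar product, identify $M_\psi Vf$ with $V(\psi(A)f)$ via Lemma~\ref{mult.op}, and apply \eqref{e-Planch} again to land on $(Z(G),\psi(A)f)_H = 0$. The only cosmetic difference is that the paper tests against $f$ with compact support while you test against all $f \in H$; both suffice since \eqref{e-Planch} holds for all $f \in L^2(N)$.
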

%
\begin{proof}
Due to the boundedness of $\psi$, it holds $D(\psi(A)) = H$ and $M_\psi G \in
L^2_q$ for $G \in L^2_q$. Therefore, if $f \in L^2(N)$ with compact support and
$G \in \mathrm{ker}(Z)$, we have using \eqref{e-Planch} and Lemma~\ref{mult.op}
\begin{align*}
  (Z(M_\psi G), f )_H &= (M_\psi G, Vf)_{L^2_q} = (G, M_\psi Vf )_{L^2_q} =
    (G, V \psi(A) f)_{L^2_q} \\
  &= (Z(G), \psi(A) f)_H = (0, \psi(A) f)_H = 0 \, .
\end{align*}
\end{proof}
This lemma allows to reduce the proof of the injectivity for $Z$ to
the frequency bands $(a_k, a_{k+1})$ for $k = 1, \dots, n$ (set
again $a_{n+1} = + \infty$). There the injectivity of the  Fourier
sine transform will be used.
%
\begin{theo}
$\mathrm{ker}(Z) = \{0\}$ and thus $V$ is a spectral representation of $H$
relative to $A$.
\end{theo}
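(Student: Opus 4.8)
The statement contains two parts: the injectivity of $Z$, and the structural conclusion. The second part is the easier one, so I would dispose of it first. The Plancherel identity \eqref{e-Planch} reads $(G,Vf)_q = (Z(G),f)_H$ for all $f \in H$ and $G \in L^2_q$, which says precisely that $Z = V^{*}$. Since $V$ is an isometry by Theorem~\ref{V.iso}, its range is closed and $\ker(Z) = \ker(V^{*}) = V(H)^{\perp}$. Hence $\ker(Z) = \{0\}$ forces $V(H) = L^2_q$, so $V$ is a surjective isometry, i.e. unitary; together with the intertwining $V(A^j f) = M_{p_j} Vf$ of Lemma~\ref{mult.op}, this exhibits $V$ as a unitary equivalence of $A$ with multiplication by $\lambda$ on $L^2_q$, which is the asserted spectral representation. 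Thus everything reduces to proving $\ker(Z) = \{0\}$.

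To prove injectivity I would localize and then argue band by band. Let $G \in \ker(Z)$. Fix $p \in \{1,\dots,n\}$ and a compact interval $[\alpha,\beta] \subset (a_p, a_{p+1})$ with $a_p < \alpha$ and $\beta < a_{p+1}$. Applying Lemma~\ref{lem-mult} with $\psi = \mathbf{1}_{(\alpha,\beta)}$ shows that the localized function $\tilde G := M_{\mathbf{1}_{(\alpha,\beta)}} G$ is again in $\ker(Z)$. By construction $\tilde G$ is supported in $(\alpha,\beta)$, bounded away from the thresholds $a_k$, and its components of index $k > p$ vanish (their domains $(a_k,+\infty)$ do not meet the band). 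In particular $\tilde G \in Y$, so by Lemma~\ref{lem-Y} its image is given by the explicit integral
\[ Z(\tilde G)(x) = \sum_{k=1}^{p} \int_{\alpha}^{\beta} q_k(\lambda)\, \tilde G_k(\lambda)\, F_\lambda^{-,k}(x) \; d\lambda = 0 . \]

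Now I would read this identity on a single branch. Fix $j$ with $1 \le j \le p$ and restrict to $x \in N_j$. Since $\lambda > a_j$ on the band, $\xi_j(\lambda)$ is real and nonzero; by Definition~\ref{gen.eig} the restriction to $N_j$ of the $k$-th eigenfunction is $F^{-,k}_{\lambda,j}(x) = e^{-i\xi_j x}$ for $k \neq j$, while the self-term is $F^{-,j}_{\lambda,j}(x) = \tfrac12(1-s_j)e^{i\xi_j x} + \tfrac12(1+s_j)e^{-i\xi_j x}$. Collecting the coefficients of $e^{\pm i\xi_j x}$ and changing variables $\xi = \xi_j(\lambda)$ (a smooth monotone bijection of $(\alpha,\beta)$ onto an interval bounded away from $0$, with $d\lambda = 2c_j\xi\,d\xi$) turns $Z(\tilde G)|_{N_j} = 0$ into an identity $\int [A(\xi)e^{-i\xi x} + B(\xi)e^{i\xi x}]\,d\xi = 0$ valid for all $x > 0$, where $B(\xi)$ equals $\tfrac12(1-s_j)\,q_j\,\tilde G_j$ times the nonvanishing Jacobian $2c_j\xi$. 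Placing $A$ on the positive and $B$ on the negative $\xi$-axis produces a single compactly supported function whose Fourier transform is entire (Paley--Wiener); its vanishing on the half-line $x>0$ forces it to vanish identically, whence $A \equiv B \equiv 0$. (Equivalently, once one first derives $\sum_{k\le p} q_k \tilde G_k = 0$, what remains is a pure Fourier sine transform, whose injectivity gives the same conclusion.) From $B \equiv 0$, together with $q_j = c_j\xi_j/|w|^2 \neq 0$ and $1 - s_j = iw/(c_j\xi_j) \neq 0$ on the band — here $w \neq 0$ since $|w(\lambda)|^2 \ge \sum_l c_l|\lambda - a_l| > 0$ by Lemma~\ref{w.esti} — I conclude $\tilde G_j = 0$ on $(\alpha,\beta)$.

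Running $j$ over $1,\dots,p$ gives $G_k = 0$ on every compact subinterval of $(a_p,a_{p+1})$, hence on the whole band, for $k = 1,\dots,p$; letting $p$ range over $1,\dots,n$ and recalling that the bands $(a_p,a_{p+1})$ cover $[a_1,+\infty)$ up to the null set $\{a_1,\dots,a_n\}$ yields $G = 0$ in $L^2_q$, so $\ker(Z) = \{0\}$. I expect the main obstacle to be exactly this branchwise Fourier step: one must separate the $e^{\pm i\xi x}$ contributions of the self-term $F^{-,j}_{\lambda,j}$ from the common exponential shared by the other branches, and then argue that vanishing on $x>0$ alone — rather than on all of $\R$ — already forces the transform to vanish, which is precisely where analyticity (Paley--Wiener) or the injectivity of the Fourier sine transform flagged before the theorem enters.
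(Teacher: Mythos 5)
Your proposal is correct and follows essentially the same strategy as the paper: localize $G$ to a compact interval inside a band $(a_p,a_{p+1})$ via Lemma~\ref{lem-mult}, use Lemma~\ref{lem-Y} to get the explicit integral formula, and then extract $G_j=0$ branch by branch from the nonvanishing of $q_j$, $\xi_j$ and $s_j-1$ (your opening reduction via $Z=V^{*}$ and $\ker(V^{*})=V(H)^{\perp}$ is a correct, slightly more explicit justification of the ``and thus'' than the paper gives). The only divergence is the final Fourier step: the paper first sets $x=0$ to obtain $\sum_{k\le p}q_kG_k=0$, subtracts this to isolate a pure Fourier sine transform and invokes its injectivity, whereas you split the self-term into $e^{\pm i\xi_j x}$ and apply a Paley--Wiener/analytic-continuation argument to a compactly supported density whose transform vanishes on a half-line; both are valid, and you correctly flag the paper's route as the equivalent alternative.
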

%
\begin{proof}
Let $G \in L^2_q$ with $Z(G) = 0$ be given. We fix $j \in \{ 1, \dots, n\}$ and
consider a compact interval $I \subseteq (a_j, a_{j+1})$.
Since $\mathbf{1}_I G$ lies in $Y$, invoking Lemmas~\ref{lem-mult}
and~\ref{lem-Y}, we obtain for almost all $x \in N$
\begin{equation} \label{e-Zinj}
  0 = Z(\mathbf{1}_I G)(x) = \sum_{k=1}^n \int_{(a_k, + \infty)}
    \!\!\!\!\!q_k(\lambda) \mathbf{1}_I(\lambda) G_k(\lambda)
    F_\lambda^{-,k}(x) \, d \lambda
  = \sum_{k=1}^j \int_I q_k(\lambda) G_k(\lambda) F_\lambda^{-,k}(x)
    \; d \lambda.
\end{equation}
In this equation, we now let $x$ to zero. This is possible, since the
hypotheses of Lebesgue's Theorem are satisfied thanks to the presence of $\mathbf{1}_I$. This implies thanks to $F_\lambda^{-,k}(0) = 1$
\[ 0 = \int_I \sum_{k=1}^j q_k(\lambda) G_k(\lambda) \; d \lambda.
\]
Since this is true for every compact interval $I \subseteq (a_j, a_{j+1})$
and since $q_k G_k \in L^1_{\mathrm{loc}}((a_j, a_{j+1}))$ for all $k = 1,
\dots, j$, we can argue as in the proof of Lemma~\ref{lem-Leb} to obtain
\begin{equation} \label{e-etape1}
  \sum_{k=1}^j q_k(\lambda) G_k(\lambda) = 0 \quad \text{for a.a. } \lambda \in
    (a_j, a_{j+1}).
\end{equation}
In a second step, we now consider $x \in N_\ell$ for some $\ell \le j$ in
\eqref{e-Zinj}. This leads to
\begin{align*}
  0 &= \sum_{{k=1}\atop{k \neq \ell}}^j \int_I q_k(\lambda) G_k(\lambda)
    \exp \bigl( - i \xi_\ell(\lambda) x \bigr) \; d \lambda + \int_I
    q_\ell(\lambda) G_\ell(\lambda) \bigl[ \cos \bigl( \xi_\ell(\lambda) x
    \bigr) - i s_\ell \sin \bigl( \xi_\ell(\lambda) x \bigr) \bigr] \;
    d \lambda \\
  &= \sum_{k=1}^j \int_I q_k(\lambda) G_k(\lambda) \exp \bigl( -i
    \xi_\ell(\lambda) x \bigr) \; d \lambda - i \int_I
    q_\ell(\lambda) G_\ell(\lambda) \bigl( s_\ell(\lambda) - 1 \bigr) \sin
    \bigl( \xi_\ell(\lambda) x \bigr) \; d \lambda.
\end{align*}
Thanks to \eqref{e-etape1}, the sum in the last expression vanishes, so we
are left with
\[ 0 = \int_I q_\ell(\lambda) G_\ell(\lambda) \bigl( s_\ell(\lambda) - 1
    \bigr) \sin \bigl( \xi_\ell(\lambda) x \bigr) \; d \lambda.
\]
In this case $\xi_\ell(\lambda) = \sqrt{(\lambda - a_\ell)/c_\ell}$ is real, so
we may substitute $p = \xi_\ell(\lambda)$ and obtain
\begin{align*}
  0 &= 2 c_\ell \int_{\xi_\ell(I)} q_\ell(a_\ell + c_\ell p^2) G_\ell(a_\ell +
    c_\ell p^2) \bigl[ s_\ell(a_\ell + c_\ell p^2) - 1 \bigr] \sin(px) p
    \; d p \\
  &= 2 c_\ell \int_{(0, +\infty)} \mathbf{1}_{\xi_\ell(I)} (p) q_\ell \bigl(
    \xi_\ell^{-1}(p) \bigr) G_\ell \bigl( \xi_\ell^{-1}(p) \bigr) \bigl[
    s_\ell \bigl( \xi_\ell^{-1}(p) \bigr) - 1 \bigr] p \sin(px) \; d p.
\end{align*}
This last integral is now the Fourier sine transform of
$\mathbf{1}_{\xi_\ell(I)} (q_\ell \circ \xi_\ell^{-1}) (G_\ell \circ
\xi_\ell^{-1}) \bigl[ (s_\ell \circ \xi_\ell^{-1}) - 1 \bigr]
\mathrm{id}$, so by the injectivity of this transformation we find
\[ 2 c_\ell q_\ell(\xi_\ell^{-1}(p)) G_\ell(
    \xi_\ell^{-1}(p)) \bigl[ (s_\ell(\xi_\ell^{-1}(p)) - 1 \bigr] p = 0
    \quad \text{for a.a. } p \in \xi_\ell(I)
\]
Dividing by $p$ and resubstituting this implies
\[ 2 c_\ell q_\ell(\lambda) G_\ell(\lambda) \bigl[ s_\ell(\lambda) - 1 \bigr]
    \xi_\ell(\lambda) = 0 \quad \text{for a.a. } \lambda \in I.
\]
Observing, that $q_\ell$, $\xi_\ell$ and $s_\ell - 1$ all may only vanish in
the points $a_1, \dots, a_n$, this finally implies $G_\ell(\lambda) = 0$ for
all $\ell \in \{1, \dots, j\}$ and almost all $\lambda \in (a_j, a_{j+1})$.
Since $j \in \{1, \dots, n\}$ was chosen arbitrarily, the assertion follows.
\end{proof}
Note that these considerations also provide a proof for the special case
$n=2$, whose proof in \cite[Theorem~4.17]{fam2} is not entirely correct.

\bigskip

Finally, we get the following characterization of the spaces $D(A^j)$.

%
\begin{theo} \label{DAj}
For $j \in \N$ the following statements are equivalent:
\begin{enumerate}
\item $u \in D(A^j)$,
\item $\lambda \mapsto \lambda^{j} (Vu)(\lambda) \in L_{q}^2$,
\item $\lambda \mapsto \lambda^{j} (Vu)_k(\lambda) \in
    L^2((a_k,+\infty),q_k)$, for all $k=1,\ldots,n$.
\end{enumerate}
\end{theo}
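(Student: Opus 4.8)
The plan is to establish the chain of equivalences by using the diagonalization of $A$ provided by Lemma~\ref{mult.op} together with the isometry property of $V$ from Theorem~\ref{V.iso}. The equivalence of (ii) and (iii) is immediate from the product structure of $L^2_q = \prod_{k=1}^n L^2((a_k,+\infty),q_k)$: the condition $\lambda \mapsto \lambda^j (Vu)(\lambda) \in L^2_q$ means precisely that each component $\lambda \mapsto \lambda^j (Vu)_k(\lambda)$ lies in $L^2((a_k,+\infty),q_k)$, since the norm on $L^2_q$ is the sum of the component norms. So the real content is the equivalence of (i) with (ii).

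For the implication (i)$\Rightarrow$(ii), suppose $u \in D(A^j)$. By Lemma~\ref{mult.op}~i) we have $V(A^j u) = M_{p_j} Vu$, that is, $V(A^j u)(\lambda) = \lambda^j (Vu)(\lambda)$ for almost all $\lambda$. Since $A^j u \in H$, Theorem~\ref{V.iso}~i) (applied with the extended isometry $V = \tilde V : H \to L^2_q$) guarantees that $V(A^j u) \in L^2_q$, and hence $\lambda \mapsto \lambda^j (Vu)(\lambda) = V(A^j u)(\lambda) \in L^2_q$, which is exactly (ii).

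The converse (ii)$\Rightarrow$(i) is where the main work lies, and I expect it to be the principal obstacle. The natural strategy is to use the fact, now established, that $V$ is a spectral representation: $\mathrm{ker}(Z) = \{0\}$ and $Z = V^{-1}$ on $V(H)$, with $V$ an isometry of $H$ onto $L^2_q$. Assuming (ii), the function $F(\lambda) := \lambda^j (Vu)(\lambda)$ lies in $L^2_q$, so by surjectivity of $V$ there is a unique $v \in H$ with $Vv = F = M_{p_j}(Vu)$. One then wants to identify $v$ with $A^j u$ and conclude $u \in D(A^j)$. The cleanest route is via the spectral theorem: since $V$ diagonalizes $A$ as multiplication by $\lambda$, membership $u \in D(A^j)$ is equivalent to $\lambda^j (Vu) \in L^2_q$ by the standard characterization of the domain of a function of a self-adjoint operator. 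Concretely, one approximates by the spectrally truncated elements $u_R := \mathbf{1}_{[a_1,R]}(A)u$, notes that $V(A^j u_R) = \mathbf{1}_{[a_1,R]} \lambda^j (Vu)$ via Lemma~\ref{mult.op}~ii) applied to the bounded function $\psi(\lambda) = \mathbf{1}_{[a_1,R]}(\lambda)\lambda^j$, and observes that the right-hand side converges in $L^2_q$ to $\lambda^j(Vu)$ as $R \to \infty$ by (ii) and dominated convergence. Since $V$ is an isometry, $\{A^j u_R\}_R$ is Cauchy in $H$, hence convergent to some $w \in H$; because $A^j$ is closed and $u_R \to u$ with $A^j u_R \to w$, one concludes $u \in D(A^j)$ with $A^j u = w$. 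The delicate point to handle carefully is the justification that the spectral subspaces $\mathbf{1}_{[a_1,R]}(A)u$ interact correctly with $V$ via the functional-calculus identity of Lemma~\ref{mult.op}~ii); once that is in place, the closedness argument for the unbounded self-adjoint operator $A^j$ finishes the proof.
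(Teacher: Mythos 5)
Your proposal is correct. For (i)$\Rightarrow$(ii) you use exactly the paper's two ingredients, the isometry of $V$ from Theorem~\ref{V.iso} and the diagonalization $V(A^ju) = M_{p_j}Vu$ from Lemma~\ref{mult.op}, and the equivalence of (ii) and (iii) is indeed trivial from the product structure of $L^2_q$. The noteworthy difference is in the converse: the paper's entire proof consists of the norm identity $(A^ju,A^ju)_H = |M_{p_j}Vu|_q^2$, which as written only establishes (i)$\Rightarrow$(ii) and leaves (ii)$\Rightarrow$(i) implicit. You correctly identify this direction as the real content and supply a complete argument via the spectral truncations $u_R = \mathbf{1}_{[a_1,R]}(A)u$, the functional-calculus identity of Lemma~\ref{mult.op}~ii) applied to the bounded function $\mathbf{1}_{[a_1,R]}(\lambda)\lambda^j$, the isometry of $V$ to get that $A^ju_R$ is Cauchy, and the closedness of the self-adjoint operator $A^j$. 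All steps are sound ($u_R\to u$ since $\sigma(A)\subset[a_1,+\infty)$, and dominated convergence applies under hypothesis (ii)), so your write-up is in fact more complete than the paper's own proof while resting on the same two lemmas.
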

\begin{proof}
Due to Theorem~\ref{V.iso}, it holds $(A^j u, A^j u)_H = |V (A^j u)
|_{q}^2$. Now Lemma~\ref{mult.op} implies
\[ (A^j u, A^j u)_H = |M_{p_j} Vu|_{q}^2.
\]
\end{proof}
The above results provide explicit solution formulae for evolution equations
involving the operator $A$. For example for the wave equation $\ddot u + A u =
0$ with $u(0)=u_0$ and $\dot u(0)=0$ a formal solution is given by $u(t)= Z
\cos (\sqrt{\lambda} t) V u_0$. Our aim in the future will be to study
properties of these solutions, as indicated in the introduction.
%
%
%
\section{Appendix}
%
%
%
%
\noindent
The book of J. Weidmann \cite{weid2} describes a general approach to
the spectral theory of systems of Sturm-Liouville equations, which is
in principle applicable to our setting. In this point of view, our problem
is seen as a system of $n$ equations on $(0,+\infty)$ coupled by boundary
conditions in $0$. Thus the kernel of the resolvent is a matrix-valued function
$\mathcal K(\cdot, \cdot, \lambda): (0,+\infty) \times (0,+\infty) \rightarrow
\C^{n \times n}$. The relation to Definition \ref{def.K} is given by
\[ \mathcal K_{ij}(x, x', \lambda) = K(x, x', \lambda) \text{ where } (x,x')
    \in N_i \times N_j \; \widehat{=} \; (0,+\infty) \times (0,+\infty).
\]
The fundamental hypothesis of Weidmann is that the kernel of the resolvent can
be written in the following form:
\begin{equation}\label{ker.weid}
\mathcal K(x,x',\lambda) \ = \ \left\{
                            \begin{array}{ll}
            \sum_{q=1}^{p}
                          \underbrace{
                          \Bigl(
                          \sum_{l=1}^{p} \alpha_{lq} \ w_{l}(x,\lambda)\Bigr)
                                       }_{ =: \  m_{q}^{\alpha}(x,\lambda)}
                                       w_{q}(x',\lambda)^{T}, & \hbox{ for } x' \leq x, \\
            \sum_{q=1}^{p}
                          \underbrace{
                          \Bigl(
                          \sum_{l=1}^{p} \beta_{lq} \ w_{l}(x,\lambda)\Bigr)
                                       }_{ =: \  m_{q}^{\beta}(x,\lambda)}
                                       w_{q}(x',\lambda)^{T}, & \hbox{ for } x' > x,
                            \end{array}
                          \right.
\end{equation}
where $\alpha_{lq}, \beta_{lq} \in \C$ and the $w_{q}: [0,\infty)
\times \mathbb{C} \rightarrow \mathbb{C}^n$ are such that $\{
w_q(\cdot,\lambda) : q = 1, \dots , p\}$ is a fundamental system of
$ \ker (A_{f} - \lambda I),$ the space of generalized eigenfunctions
of $A$. Here $A_{f}: D(A_{f}) \rightarrow C^0(\mathbb{R}) $ is the
formal operator, in our case
$A_{f} = A = (-c_k \cdot \partial^2_x + a_k)_{k=1, \dots, n}$
but $D(A_{f}) = \prod_{k=1}^{n}C^{2}(N_k)$, i.e. the operator $A$
without transmission conditions nor integrability conditions at
$\infty$. Clearly in our case $\dim \bigl( \ker (A_{f} - \lambda I
)\bigr) = 2n$ and thus
$p = 2n$.

In contrast to the typical applications treated in \cite{weid2} as
for instance the Dirac system,  we consider in this paper a \emph{transmission}
problem, i.e. intuitively the components of the $w_q$ are functions
on different domains $N_k$ (while mathematically all $N_k$ are equivalent
to $(0,+\infty)$): the branches of a star. For all such applications
it is highly important to use only generalized eigenfunctions
satisfying the transmission conditions $(T_0)$ and $(T_1)$, for
example Theorem \ref{DAj} would be impossible otherwise.

Supposing the ansatz \eqref{ker.weid} of Weidmann, this is not
possible, what we shall show now.

In Definition~\ref{def.K} we have given an explicit expression for the (unique)
kernel $K$ of the resolvent of $A$, using only elements of
$ \ \ker (A_T - \lambda I)$, where $A_T: D(A_T) \rightarrow H$
satisfies
$A_T = A = (-c_k \cdot \partial^2_x + a_k)_{k=1, \dots, n}$
and $D(A_T) = \prod_{k=1}^{n}C^{2}(N_k) \cap \{(u_k)_{k=1}^n$
satisfies $(T_0),(T_1) \}$. Note that $ \ker (A_T - \lambda I)$ is
the $n$-dimensional space of generalized eigenfunctions of $A$
satisfying the transmission conditions $(T_0)$ and $(T_1)$, but
without integrability conditions at infinity.

Suppose that we have a representation of $K$ as given in
\eqref{ker.weid} satisfying
$$w_q(\cdot,\lambda) \in \ker (A_T - \lambda I),\ q=1, \dots, p, \ \lambda \in \rho(A)$$
and thus we can take $p=n$. Let us fix $x \in N_1$ and $\lambda \in
\rho(A)$. Then the $m_{q}^\beta (x,\lambda) = : m_{q}^\beta \in
\mathbb{C}$ are constants and the expression
\begin{equation*}
    g(x',\lambda) := \sum_{q=1}^{p} m_{q}^\beta w_{q}(x',\lambda)
\end{equation*}
defines a function
\[ g(\cdot,\lambda) : \{ x' \in [ 0, \infty ) : x'  > x  \} \rightarrow
\mathbb{C}^n.
\]
Clearly, the functions
\[ [w_{q}(\cdot,\lambda)]_j: N_j \rightarrow \mathbb{C}
\]
can be uniquely extended to entire functions in $x'$ because they
are linear combinations of $e^{\pm i \xi_j (\lambda) x'}$. Thus the
$[g(\cdot,\lambda) ]_j $ are entire. Our hypothesis
$w_q(\cdot,\lambda) \in \ker (A_T - \lambda I),\ q=1, \dots, p$,
implies thus
\begin{equation}\label{transm.sat}
  g(\cdot,\lambda)\in \ker (A_T - \lambda I),\ q=1, \dots, p.
\end{equation}
But comparing \eqref{ker.weid} with Definition~\ref{def.K} yields
\begin{align*}
  g (x',\lambda) &= F_{\lambda}^{\pm,2}(x) \cdot F_{\lambda}^{\pm,1}(x')
    \text{ for } x' \in N_2, \dots , N_n
\intertext{and}
  g (x',\lambda) &= F_{\lambda}^{\pm,1}(x) \cdot F_{\lambda}^{\pm,2}(x')
    \text{ for } x' \in N_1,
\end{align*}
which makes sense after analytic continuation. Using the assumption
that $g$ satisfies $(T_0)$, we get from these two equalities,
putting $x' = 0 \in \overline{N_2}$ into the first and $x' = 0 \in
\overline{N_1}$ into the second
\[ F_\lambda^{\pm, 2}(x) = F_\lambda^{\pm, 2}(x) F_\lambda^{\pm, 1}(0)
    = g(0, \lambda) = F_\lambda^{\pm, 1}(x) F_\lambda^{\pm, 2}(0) =
    F_\lambda^{\pm, 1}(x).
\]
But inspecting the definitions of the generalized eigenfunctions,
bearing in mind that $x \in N_1$ was arbitrary, this implies
\[ \cos \bigl( \xi_1(\lambda) x \bigr) \pm i \sin \bigl( \xi_1(\lambda)
    x \bigr) = \cos \bigl( \xi_1(\lambda) x \bigr) \pm i s_1(\lambda)
    \sin \bigl( \xi_1(\lambda) x \bigr)
\]
and thus $s_1(\lambda) = 1$. This finally implies $\sum_{j=1}^n c_j
\xi_j(\lambda) = 0$, which is impossible for all real $\lambda <
a_1$, since then $\xi_j(\lambda)$ is purely imaginary. Furthermore,
$\sum_{j=1}^n c_j \xi_j$ is analytic, so this equality can, if ever,
only be fulfilled on a discrete set of $\lambda \in \C$.

We have thus proven:
%
\begin{theo} \label{comp.weidm}
Representation \eqref{ker.weid} of the kernel $\mathcal K$ of the
resolvent of $A$ is not possible, using exclusively generalized
eigenfunctions
\[ w_q(\cdot,\lambda) \in \ker (A_T - \lambda I),\ q=1, \dots, p.
\]
\end{theo}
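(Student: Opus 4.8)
The plan is to argue by contradiction, assuming that the unique resolvent kernel $K$ admits a Weidmann-type representation \eqref{ker.weid} in which every fundamental solution $w_q(\cdot,\lambda)$ already satisfies the transmission conditions, i.e.\ $w_q(\cdot,\lambda) \in \ker(A_T - \lambda I)$. Since $\ker(A_T - \lambda I)$ is $n$-dimensional, I can take $p = n$ in the ansatz. The strategy is to extract from this ansatz a single function of $x'$ that inherits the transmission conditions from the $w_q$, and then to confront this with the explicit form of $K$ given in Definition~\ref{def.K}; the two descriptions will force an algebraic identity that cannot hold.

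First I would fix $x \in N_1$ and $\lambda \in \rho(A)$, so that in the branch $x' > x$ the coefficients $m_q^\beta(x,\lambda)$ become mere constants, and set $g(x',\lambda) := \sum_{q=1}^p m_q^\beta w_q(x',\lambda)$. Because each component $[w_q(\cdot,\lambda)]_j$ is a linear combination of $e^{\pm i \xi_j(\lambda) x'}$, it extends to an entire function, so $g(\cdot,\lambda)$ extends to an entire $\C^n$-valued function and, as a finite linear combination of elements of $\ker(A_T - \lambda I)$, it again lies in $\ker(A_T - \lambda I)$; in particular it satisfies $(T_0)$. The next step is to read off what $g$ must equal by comparing \eqref{ker.weid} with Definition~\ref{def.K}: on $x' \in N_2, \dots, N_n$ we get $g(x',\lambda) = F_\lambda^{\pm,2}(x)\,F_\lambda^{\pm,1}(x')$, while on $x' \in N_1$ we get $g(x',\lambda) = F_\lambda^{\pm,1}(x)\,F_\lambda^{\pm,2}(x')$, both valid after analytic continuation in $x'$.

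Now I would exploit the continuity condition $(T_0)$ at the central node. Evaluating the first expression at $x' = 0 \in \overline{N_2}$ and the second at $x' = 0 \in \overline{N_1}$, and using $F_\lambda^{\pm,j}(0) = 1$, both give the common value $g(0,\lambda)$, whence $F_\lambda^{\pm,2}(x) = F_\lambda^{\pm,1}(x)$. Since $x \in N_1$ was arbitrary, inspecting the definitions of the generalized eigenfunctions in Definition~\ref{gen.eig} turns this into
\[ \cos\bigl(\xi_1(\lambda) x\bigr) \pm i \sin\bigl(\xi_1(\lambda) x\bigr)
   = \cos\bigl(\xi_1(\lambda) x\bigr) \pm i\, s_1(\lambda) \sin\bigl(\xi_1(\lambda) x\bigr), \]
forcing $s_1(\lambda) = 1$, i.e.\ $\sum_{j=1}^n c_j \xi_j(\lambda) = 0$. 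The contradiction is then immediate: for real $\lambda < a_1$ every $\xi_j(\lambda)$ is purely imaginary with the same sign, so the sum cannot vanish; and since $\sum_{j=1}^n c_j \xi_j$ is analytic, the identity could at most hold on a discrete set, never on an open set of resolvent values.

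The routine parts here are the entire-extension argument and the bookkeeping with the $\pm$ signs and the branch of the square root. The step I expect to be the main obstacle, and the one deserving the most care, is the passage from the ansatz \eqref{ker.weid} to the two explicit expressions for $g$: one must justify that fixing $x \in N_1$ and restricting attention to the off-diagonal block $x' > x$ isolates exactly the function $F_\lambda^{\pm,1}(x') $ (up to the scalar $F_\lambda^{\pm,2}(x)$) across all branches, and that the analytic continuation in $x'$ is legitimate and matches the two branch-dependent formulae coherently. Once this identification is secured, the remainder is a short algebraic collapse to the impossible relation $s_1(\lambda) = 1$.
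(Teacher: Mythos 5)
Your proposal is correct and follows essentially the same route as the paper's own argument: fixing $x \in N_1$, forming $g(x',\lambda) = \sum_q m_q^\beta w_q(x',\lambda)$, extending analytically, comparing with Definition~\ref{def.K}, and using $(T_0)$ at the node to force $s_1(\lambda)=1$, which is impossible off a discrete set. The step you flag as the main obstacle (identifying $g$ on the off-diagonal block) is handled in the paper exactly as you describe, so no further work is needed.
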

%
The reason for this is, shortly speaking, the rigidity of
\eqref{ker.weid} caused by the use of the same linear combinations
of generalized eigenfunctions above the diagonals of all $N_j \times
N_k$ (idem below). This is not compatible with the fact that the
kernel is non-smooth only on the main diagonals, i.e. the diagonals
of $N_j \times N_j$, cf. Definition~\ref{def.K} and
Figure~\ref{NxN}.

This means that the approach of J. Weidmann in \cite{weid2}, when
applied to problems on the star-shaped domain, does not sufficiently
take into account its non-manifold character. The resulting
expansion formulae would use generalized eigenfunctions which are in
a sense incompatible with the geometry of the domain. This would
have undesirable consequences: for example, the important feature of
Theorem \ref{DAj} that the belonging of $u$ to $D(A^j)$ is expressed
by the decay of the components of $Vu$ would be impossible, due to
artificial singularities in the expansion formula.


\end{document}